\documentclass[a4paper,UKenglish,cleveref, autoref, thm-restate]{lipics-v2021}

\hideLIPIcs  


\bibliographystyle{plainurl}
\usepackage{todonotes}

\newcommand{\N}{\mathbb N}
\newcommand{\Q}{\mathbb Q}
\newcommand{\bA}{{\mathfrak A}}
\newcommand{\bB}{{\mathfrak B}}
\newcommand{\bC}{{\mathfrak C}}
\newcommand{\bD}{{\mathfrak D}}

\newcommand{\bG}{{\mathfrak G}}
\newcommand{\bH}{{\mathfrak H}}
\newcommand{\bT}{{\mathfrak T}}

\DeclareMathOperator{\Aut}{Aut}
\DeclareMathOperator{\End}{End}

\DeclareMathOperator{\fPol}{fPol}

\DeclareMathOperator{\VCSP}{VCSP}
\DeclareMathOperator{\CSP}{CSP}

\DeclareMathOperator{\Pol}{Pol}

\DeclareMathOperator{\Opt}{Opt}
\DeclareMathOperator{\Id}{Id}
\DeclareMathOperator{\Feas}{Feas}

\DeclareMathOperator{\OIT}{OIT}

\DeclareMathOperator{\multigr}{Multigraph}

\newcommand{\ignore}[1]{}

\title{The Complexity of Resilience for Digraph Queries} 


\author{Manuel Bodirsky}{Institut f\"ur Algebra, TU Dresden, Germany}{manuel.bodirsky@tu-dresden.de}{https://orcid.org/0000-0001-8228-3611}{The author has been funded by the European Research Council (Project POCOCOP, ERC Synergy Grant 101071674) and by the DFG (Project FinHom, Grant 467967530). Views and opinions expressed are however those of the authors only and do not necessarily reflect those of the European Union or the European Research Council Executive Agency. Neither the European Union nor the granting authority can be held responsible for them.}

\author{\v{Z}aneta Semani\v{s}inov\'{a}}{Institute of Discrete Mathematics and Geometry, TU Wien, Austria}{zaneta.semanisinova@tu-dresden.de}{https://orcid.org/0000-0001-8111-0671}{The author has been funded by the European Research Council (Project POCOCOP, ERC Synergy Grant 101071674). Views and opinions expressed are however those of the authors only and do not necessarily reflect those of the European Union or the European Research Council Executive Agency. Neither the European Union nor the granting authority can be held responsible for them. This research was funded in whole or in part by the Austrian Science Fund (FWF) 10.55776/ESP6949724.}

\authorrunning{M. Bodirsky and \v{Z}. Semani\v{s}inov\'{a}} 

\Copyright{M. Bodirsky and \v{Z}. Semani\v{s}inov\'{a}} 

\ccsdesc[500]{Theory of computation~Problems, reductions and completeness}
\ccsdesc[500]{Theory of computation~Complexity theory and logic}
\ccsdesc[500]{Theory of computation~Database query processing and optimization (theory)}

\keywords{valued constraints, unions of conjunctive queries, resilience, computational complexity, pp-constructions} 

\category{} 

\relatedversion{} 




\nolinenumbers 

\EventEditors{John Q. Open and Joan R. Access}
\EventNoEds{2}
\EventLongTitle{42nd Conference on Very Important Topics (CVIT 2016)}
\EventShortTitle{CVIT 2016}
\EventAcronym{CVIT}
\EventYear{2016}
\EventDate{December 24--27, 2016}
\EventLocation{Little Whinging, United Kingdom}
\EventLogo{}
\SeriesVolume{42}
\ArticleNo{23}

\begin{document}

\maketitle

\begin{abstract}
We prove a complexity dichotomy for the resilience problem for unions of conjunctive digraph queries (i.e., for existential positive sentences over the signature $\{R\}$ of directed graphs). Specifically, for every union $\mu$ of conjunctive digraph queries, the following problem is in P or NP-complete: given a directed multigraph $G$ and a natural number $u$, can we remove $u$ edges from $G$ so that $G \models \neg \mu$? In fact, we verify a more general dichotomy conjecture from~\cite{Resilience-VCSPs} for all resilience problems in the special case of directed graphs, and show that for such unions of queries $\mu$ there exists a countably infinite (`dual') valued structure $\Delta_\mu$ which either primitively positively constructs 1-in-3-3-SAT, and hence the resilience problem for $\mu$ is NP-complete by general principles, or has a pseudo cyclic canonical fractional polymorphism, and the resilience problem for $\mu$ is in P.
\end{abstract}

\newpage

\section{Introduction}\label{sec:intro}
The \emph{resilience problem} for a fixed conjunctive query,
or more generally for a union of conjunctive queries $\mu$, is the problem of deciding for a given database $\bA$ and $u \in {\mathbb N}$ whether it is possible to remove at most $u$ tuples from $\bA$  so that $\bA$ does not satisfy $\mu$. 
The resilience problem lies at the core of algorithmic challenges in various forms of reverse data management, where an action is required on the input data to achieve a desired outcome in the output data~\cite{RDM}. 
The computational complexity of this problem depends on the query $\mu$. The resilience problem is always in NP, and often NP-complete, but for some queries $\mu$ the problem can be solved in polynomial time; see, e.g.,~\cite{Resilience,NewResilience,LatestResilience, Resilience-VCSPs} for some partial classification results.

The computational complexity of the problem also depends on
whether we view the database $\bA$ under set semantics (i.e., $\bA$ is treated as a relational structure) or under bag semantics (i.e., $\bA$ is a structure where each tuple appears with some multiplicity), and both settings have been studied in the literature (in particular, see~\cite{Resilience-VCSPs,LatestResilience} for results in bag semantics). The importance of bag semantics stems from applications: bag databases represent SQL databases more faithfully. There are examples of conjunctive queries $\mu$ for which the resilience problem in bag and set semantics have different complexities~\cite{LatestResilience}.

Recently, a connection between the resilience problem under bag semantics and valued constraint satisfaction has been established~\cite{Resilience-VCSPs}. 
The
connection is 
based on the fact that for every union of connected conjunctive queries $\mu$, the resilience problem in bag semantics is equal to a valued constraint satisfaction problem (VCSP) for some template dependent on $\mu$, and therefore the algebraic tools developed for describing the complexity of VCSPs can be utilized. To do so, we focus in this paper on the resilience problem exclusively under bag semantics and from now on, always implicitly assume this semantics for resilience problems.

It has been conjectured that resilience problems exhibit a complexity dichotomy in the sense that all problems are NP-complete or in P~\cite{LatestResilience}.
This conjecture has been verified in some 
special cases, 
for instance if $\mu$ is a conjunctive query which is self-join-free~\cite{LatestResilience}, 
or if $\mu$ is a union of conjunctive queries that are Berge-acyclic~\cite{Resilience-VCSPs}. 
The proof of the latter is based on a connection to finite-domain VCSPs, which also covers resilience problems for regular path queries (RPQs), and even two-way RPQs.
Resilience of (one-way) RPQs has also been studied recently in~\cite{RPQ} where the authors present language-theoretic conditions for computational hardness.
However, the conjecture in full generality remains open.

In this article, we confirm the complexity dichotomy conjecture in the special case where the signature of the database consists of a single binary relation symbol $R$, 
that is, we prove the following:
\begin{theorem}\label{thm:main-informal}
If $\mu$ is a union of conjunctive queries over a binary signature $\{R\}$, then the resilience problem for $\mu$ is in P or NP-complete.
\end{theorem}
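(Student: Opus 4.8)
\emph{Sketch of the proof.}
The plan is to route everything through the connection between resilience and valued constraint satisfaction of~\cite{Resilience-VCSPs}. First I would reduce to the case that $\mu$ is a union of \emph{connected} Boolean conjunctive queries. Indeed, if a disjunct of $\mu$ splits into a conjunction over pairwise disjoint variable blocks, then, writing $\mu = q^{(1)} \vee \dots \vee q^{(m)}$ with $q^{(j)} = q^{(j)}_1 \wedge \dots \wedge q^{(j)}_{\ell_j}$ a decomposition into connected components, for every input multigraph $G$ one has
\[
\mathrm{res}_\mu(G) \;=\; \min_{f}\; \mathrm{res}_{\,\bigvee_{j} q^{(j)}_{f(j)}}(G),
\]
the minimum ranging over the finitely many choice functions $f$ selecting one component $f(j) \in \{1,\dots,\ell_j\}$ of each disjunct; here $\bigvee_j q^{(j)}_{f(j)}$ is a union of connected queries. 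Since polynomial solvability is closed under taking finitely many minima, and NP-hardness of such a minimum forces NP-hardness of one of the terms, the dichotomy for unions of connected queries implies the general one. So assume from now on that $\mu$ is a union of connected Boolean conjunctive digraph queries with (finite) set of cores $\mathcal{F} = \{F_1,\dots,F_k\}$.

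Next I would make the dual valued template explicit. As each $F_i$ is connected, $\mathcal{C}_\mu := \{H : H \models \neg\mu\} = \mathrm{Forb}_h(\mathcal{F})$ is closed under disjoint unions, subgraphs, and homomorphic preimages, hence $\mathcal{C}_\mu = \CSP(\mathbb{D})$ for a countable digraph $\mathbb{D} = (D; R^{\mathbb{D}})$; by the theory of universal structures for $\mathrm{Forb}_h$-classes with connected obstructions one can take $\mathbb{D}$ to be $\omega$-categorical, in fact a reduct of a finitely bounded homogeneous structure (or at least tame enough for the canonical-polymorphism machinery), and finite precisely when every $F_i$ is homomorphically equivalent to an oriented forest --- the Berge-acyclic / finite-domain case already settled in~\cite{Resilience-VCSPs}. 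Put $\Delta_\mu := (D; \rho)$ with $\rho \colon D^2 \to \{0,1\}$, $\rho(a,b) = 0$ iff $(a,b) \in R^{\mathbb{D}}$. Viewing an input multigraph $G$ as a valued instance with one variable per vertex and, for each edge with its multiplicity, a $\rho$-constraint, the optimum equals the minimum number of edges whose deletion makes the remaining subgraph map to $\mathbb{D}$, i.e.\ satisfy $\neg\mu$; thus the resilience problem for $\mu$ is $\VCSP(\Delta_\mu)$. This realizes the correspondence of~\cite{Resilience-VCSPs} for the signature $\{R\}$ with the dual chosen as above.

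The heart of the argument is then the complexity classification of $\VCSP(\Delta_\mu)$, i.e.\ verifying the dichotomy conjecture of~\cite{Resilience-VCSPs} for these templates. Since feasibility is vacuous (deleting all edges always works, unless $\mu$ is trivially true), all the content sits in $\rho$, equivalently in the isomorphism type of $\mathbb{D}$. As $\Delta_\mu$ lives over an $\omega$-categorical, Ramsey-expandable structure, I would argue: either $\Delta_\mu$ has a \emph{pseudo-cyclic canonical fractional polymorphism} --- whence, by canonicity, $\VCSP(\Delta_\mu)$ reduces to a finite-domain valued template on orbits of tuples carrying a cyclic fractional polymorphism, solved in polynomial time by the basic LP relaxation --- or $\Delta_\mu$ primitively positively constructs the valued template of $1$-in-$3$-SAT and is NP-hard by general principles. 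The dichotomy then amounts to showing these cases are exhaustive, which I would establish by a case analysis on the shapes of the cores in $\mathcal{F}$ (a single loop; short directed cycles; longer directed cycles and their combinations; directed-acyclic cores with an undirected cycle; oriented forests), producing in each case either the explicit pseudo-cyclic fractional polymorphism of $\Delta_\mu$ or the pp-construction.

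I expect the tractability side of this last step to be the main obstacle: for every $\mu$ that does \emph{not} pp-construct $1$-in-$3$-SAT one must actually exhibit the pseudo-cyclic canonical fractional polymorphism of $\Delta_\mu$ and verify that it certifies a polynomial-time algorithm. This is exactly where restricting to a binary signature is indispensable --- it allows one to substitute the well-developed structure theory of digraph homomorphisms (cores of digraphs, smooth digraphs, digraph dualities, and the associated tame classes of $\omega$-categorical digraphs) for the full, still-conjectural infinite-domain VCSP dichotomy that an arbitrary signature would require. A secondary point needing care is to make the construction of the dual $\mathbb{D}$, and hence of $\Delta_\mu$, canonical and model-theoretically tame uniformly in $\mu$, so that the algebraic machinery applies across the board.
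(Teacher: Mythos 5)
Your framing is the right one and matches the paper's setup: reduce to unions of connected queries, pass to an $\omega$-categorical dual $\bC_\mu$ and the valued template $\Delta_\mu$, use Proposition~\ref{prop:connection} to identify resilience with $\VCSP(\Delta_\mu)$, and then aim for the dichotomy ``canonical pseudo cyclic fractional polymorphism (P) versus pp-construction of 1-in-3-SAT (NP-hard)''. But the entire content of the theorem is hidden in your sentence ``which I would establish by a case analysis on the shapes of the cores \dots producing in each case either the explicit pseudo-cyclic fractional polymorphism of $\Delta_\mu$ or the pp-construction''. That case analysis \emph{is} the theorem, and none of its ingredients appear in your sketch. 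Concretely, the paper needs: (i) a way to deal with self-joins, which are unavoidable over the single symbol $R$ --- this is Theorem~\ref{thm:self-join-var}, a new pp-power construction (indexed by variable/query pairs) showing that $\Delta_{f(\nu)}$ pp-constructs $\Delta_\nu$ for a self-join-free variant $\nu$ of the query; (ii) the explicit gadget pp-construction of $(\{0,1\};\OIT)$ for self-join-free queries whose multigraph contains a cycle of length $\geq 3$ (Theorem~\ref{thm:sjf-cycle}, with the expressions $\psi_R,\psi_S,\psi_T$ and the $\Opt$-trick forcing edge alternation); (iii) the case of queries whose multigraphs are trees, where the dual is finite and acyclic and one pp-constructs directed max-cut and hence $\OIT$ (Lemma~\ref{lem:long-path}); and (iv) the tractability of the one genuinely nontrivial easy case $\mu_c$, via the homogeneous tournament dual and a $2/3$--$1/3$ mixture of canonical ``majority/minority'' tournament operations (Lemma~\ref{lem:tract}).

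Your expectation of where the difficulty lies is also misaligned: you anticipate that ``the tractability side \dots is the main obstacle'', i.e.\ exhibiting fractional polymorphisms across many tame classes of digraph duals. In fact the classification collapses to exactly three tractable queries ($\mu_\ell$, $\mu_e$, $\mu_c$); everything else is NP-complete, and the real work is on the hardness side, in the two pp-constructions (i)--(iii) above. A smaller point: your reduction to connected queries via $\mathrm{res}_\mu(G) = \min_f \mathrm{res}_{\bigvee_j q^{(j)}_{f(j)}}(G)$ gives the P direction, but to conclude the dichotomy you need that NP-hardness of one of the connected unions transfers \emph{to} $\mu$, which requires an actual reduction (this is Lemma~\ref{lem:con}, with a non-implication hypothesis); the principle you state (``NP-hardness of the minimum forces NP-hardness of one of the terms'') is the unhelpful direction and does not by itself yield the transfer.
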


The class of unions of conjunctive queries over $\{R\}$
is incomparable with the class of queries studied in~\cite{NewResilience} (in set semantics), since they study queries with arbitrary signatures, but with a single repetition of a single binary relation symbol.
In the case that the signature is equal to $\{R\}$, the query expresses a directed graph property and the resilience problem can be phrased as follows: given a directed multigraph $G$ and a natural number $u$, can we remove $u$ edges from $G$ so that $G \models \neg \mu$?
Edge-removal problems have been studied from a computational complexity perspective in the graph theory community as well, especially for concrete properties~\cite[Section A1.2]{GareyJohnson}.
In~\cite{FominEtAl20} the authors study edge-removal problems for first-order logic properties in general; however, they only consider simple undirected graphs and study the problem from the perspective of \emph{parametrized complexity}, where the number of edges that is removed is the parameter.

The scope of our contribution extends beyond verifying the complexity dichotomy conjecture for digraph resilience problems: we also verify a variant of a stronger conjecture (from~\cite{Resilience-VCSPs}) which provides a precise mathematical condition aiming at predicting the border between NP-hardness and polynomial-time tractability,  
based on simulations of a hard Boolean constraint satisfaction problem (CSP) using so-called \emph{pp-constructions}. This condition is one-sided correct in the sense that if it applies, the corresponding resilience problem is NP-hard. The authors of~\cite{Resilience-VCSPs} conjectured that if the condition does not apply, the resilience problem is in P.

Several results in the present paper are relevant for the larger research goal of classifying the complexity of all resilience problems in bag semantics by modeling them as VCSPs and applying methods and results from the VCSP literature. For instance, our result that the two conditions of the dichotomy statement are disjoint (Corollary~\ref{cor:disjoint}) holds for resilience problems in general (without the assumption that P $\neq$ NP). Another result that holds for resilience in general is Theorem~\ref{thm:self-join-var}, which provides pp-constructions (and, therefore, polynomial-time reductions) based on the idea of \emph{self-join variations} from~\cite{NewResilience}.
We believe that this paper is an important step towards classifying the complexity for resilience problems of queries with self joins and understanding reductions between resilience problems by algebraic and logic tools. 

\section{Preliminaries}

In this section, we provide preliminaries that cover the notions appearing in Section~\ref{sect:main}, where the main theorem of the article (Theorem~\ref{thm:main-ucq}) is stated. Since the theorem provides not only a complexity dichotomy, but also an algebraic one, this requires several notions from the theory of VCSPs. For readers mostly interested in the complexity of resilience problems on its own, we recommend reading only Sections~\ref{sect:vs}--\ref{sect:connection} and skipping Sections~\ref{sect:expr}--\ref{sect:fpol}, and coming back to them 
when they are needed in the proofs in the article.

The set $\{0,1,2,\dots\}$ of natural numbers is denoted by ${\mathbb N}$. For $k \in \N$, the set $\{1,\dots, k\}$ will be denoted by $[k]$. The set of rational numbers is denoted by $\mathbb Q$ and the standard strict linear order on $\Q$ by $<$. The set of real numbers is denoted by $\mathbb{R}$. 
We also need an additional value $\infty$; all
we need to know about $\infty$ is that
\begin{itemize}
\item $a < \infty$ for every $a \in {\mathbb R}$,
\item $a + \infty = \infty + a = \infty$ for all $a \in {\mathbb R} \cup \{\infty\}$, and 
\item $0 \cdot \infty = 
\infty \cdot 0 = 0$
and $a \cdot \infty =
\infty \cdot a = \infty$ for $a > 0$. 
 \end{itemize}

Let $A$ be a set and $k \in \N$.
If $t \in A^k$, then we implicitly assume that $t=(t_1, \dots, t_k)$, where $t_1,\dots,t_k \in A$. 
If $\ell \in \N$ and $f \colon A^{\ell} \to A$ is an operation on $A$ and $t^1, \dots,t^\ell \in A^k$, then we denote 
$(f(t^1_1, t^2_1, \dots, t^\ell_1), \ldots, f(t^1_k, t^2_k, \dots, t^\ell_k))$
by $f(t^1, \dots, t^\ell)$ and say that 
\emph{$f$ is applied componentwise}.

\subsection{Valued structures} \label{sect:vs}
Let $C$ be a set and let $k \in {\mathbb N}$. 
A \emph{valued relation of arity $k$ over $C$}
is 
a function $R \colon C^k \to {\mathbb Q} \cup \{\infty\}$.  
We write ${\mathscr R}_C^{(k)}$ for the set of all valued relations over $C$ of arity $k$, and define 
$${\mathscr R}_C := \bigcup_{k \in {\mathbb N}} {\mathscr R}_C^{(k)}.$$
A valued relation is called \emph{finite-valued} if it takes values only in $\Q$.
Usual relations will also be called \emph{crisp} relations. A valued relation $R \in {\mathscr R}_C^{(k)}$ that only takes values from $\{0,\infty\}$ will be identified with the crisp relation 
$\{t \in C^k \mid R(t) = 0\}.$
The unary empty relation, where every element evaluates to $\infty$, is denoted by $\bot$. The crisp equality relation, where a pair of elements evaluates to $0$ if they are equal and evaluates to $\infty$ otherwise, is denoted by $(=)_0^\infty$. 
For $R \in {\mathscr R}_C^{(k)}$ the \emph{feasibility relation of $R$} is defined as
$\Feas(R) := \{t \in C^k \mid R(t) < \infty\}.$
%

A \emph{(relational) signature} $\tau$ is a~set of \emph{relation symbols}, each of them equipped with an arity from~${\mathbb N}$. A~\emph{valued $\tau$-structure} $\Gamma$ consists of a~set $C$, which is also called the \emph{domain} of $\Gamma$, and a~valued relation $R^{\Gamma} \in {\mathscr R}_C^{(k)}$ for each relation symbol $R \in \tau$ of arity $k$. All valued structures in this article have countable domains. We often write $R$ instead of $R^{\Gamma}$ if the valued structure is clear from the context.
A valued $\tau$-structure where all valued relations only take values from $\{0,\infty\}$
may be viewed as a~\emph{relational} or \emph{crisp} $\tau$-structure in the classical sense. 
When not specified, we assume that the domains of relational structures $\bA, \bB, \dots$ are denoted $A, B, \dots$, respectively, and the domains of valued structures $\Gamma, \Delta, \dots$ are denoted $C, D, \dots$, respectively.

\begin{example}\label{expl:max-cut-vs}
Let $R$ be a binary relation symbol. Then $\Gamma_{\textup{MC}}$ with the domain $\{0,1\}$ and the signature $\{R\}$ where $R^{\Gamma_{\textup{MC}}}(x,y) = 0$ if $x = 0$ and $y =1$, and $R^{\Gamma_{\textup{MC}}}(x,y) = 1$ otherwise, is a valued structure.
\end{example}
If $\sigma \subseteq \tau$ and $\Gamma'$ is a valued $\sigma$-structure such that $R^{\Gamma'} = R^{\Gamma}$ for every $R \in \sigma$, then we call $\Gamma'$ a \emph{reduct} of $\Gamma$ and $\Gamma$ an \emph{expansion} of $\Gamma'$.

Let $\tau$ be a relational signature.
A first-order formula is called atomic if it is of the form $R(x_1, \dots, x_k)$ for some $R \in \tau$ of arity $k$, $x=y$, or $\bot$.
We introduce a generalization of conjunctions of atomic formulas to the valued setting. 
An \emph{atomic $\tau$-expression} is an expression
of the form $R(x_1,\dots,x_k)$ for $R \in \tau \cup \{(=)_0^\infty, \bot\}$ and (not necessarily distinct) variable symbols $x_1,\dots,x_k$. 
A \emph{$\tau$-expression} is an expression $\phi$ 
of the form 
$\sum_{i=1}^m \phi_i$
where $m \in {\mathbb N}$ 
and $\phi_i$ for $i \in \{1,\dots,m\}$ is 
an atomic $\tau$-expression.
Note that the same atomic $\tau$-expression might appear several times in the sum. 
We write $\phi(x_1,\dots,x_n)$ for a $\tau$-expression where all the variables  
are from the set $\{x_1,\dots,x_n\}$. 
If $\Gamma$ is a valued $\tau$-structure, then a $\tau$-expression $\phi(x_1,\dots,x_n)$ defines over $\Gamma$ a member of ${\mathscr R}_C^{(n)}$ in a natural way, which we denote by $\phi^{\Gamma}$. If $\phi$ is the empty sum then $\phi^{\Gamma}$ is constant~$0$. 

\begin{definition} 
Let $k \in {\mathbb N}$, 
let $R \in {\mathscr R}^{(k)}_C$, and let $\alpha$ be a permutation of $C$. Then $\alpha$ \emph{preserves} $R$ if for all $t \in C^k$
we have $R(\alpha(t)) = R(t)$. 
If $\Gamma$ is a valued structure with domain $C$, then 
an \emph{automorphism} of $\Gamma$ is a permutation of $C$ that preserves all valued relations of $\Gamma$. 
\end{definition}
The set of all automorphisms of $\Gamma$ is denoted by $\Aut(\Gamma)$, and forms a group with respect to composition. 

Let $A$ be a~set and $R \subseteq A^k$. An operation $f \colon A^\ell \to A$ on the set $A$ \emph{preserves} $R$ if $f(t^1, \dots, t^\ell) \in R$ for every
$t^1,\dots, t^\ell \in R$.
If $\bA$ is a~relational structure and $f$ preserves all relations of $\bA$, then $f$ is called a~\emph{polymorphism} of $\bA$. The set of all polymorphisms of $\bA$ is denoted by $\Pol(\bA)$ and is closed under composition. We write $\Pol^{(\ell)}(\bA)$ for the set of $\ell$-ary operations in $\Pol(\bA)$, $\ell \in \N$. Unary polymorphisms are called \emph{endomorphisms} and $\Pol^{(1)}(\bA)$ is also denoted by $\End(\bA)$. 

Let $\tau$ be a relational signature and let $\bA$ and $\bB$ be relational $\tau$-structures. A map $h \colon A \to B$ is called a \emph{homomorphism} from $\bA$ to $\bB$ if for every $R \in \tau$ of arity $k$ and every $t \in R^\bA$, $h(t)\in R^\bB$. $\bA$ and $\bB$ are called \emph{homomorphically equivalent} if there is a homomorphism from $\bA$ to $\bB$ and from $\bB$ to $\bA$, and they are called \emph{homomorphically incomparable} if there is no homomorphism from $\bA$ to $\bB$ or from $\bB$ to $\bA$.
The generalizations of the notions of polymorphisms and homomorphisms to valued structures will be defined in Sections~\ref{sect:pp-const} and~\ref{sect:fpol}.

\subsection{Valued constraint satisfaction problems}

In this section we assume that
 $\Gamma$ is a fixed valued $\tau$-structure for a 
 \emph{finite} signature $\tau$. We first define the valued constraint satisfaction problem of a relational structure and then explain the connection to the less general constraint satisfaction problem 
in Remark~\ref{rem:crisp-expr}.


\begin{definition} \label{def:vcsp}
The \emph{valued constraint satisfaction problem for $\Gamma$}, denoted by \emph{$\VCSP(\Gamma)$}, is the computational
problem to decide for a given $\tau$-expression $\phi(x_1,\dots,x_n)$ 
and a given $u \in {\mathbb Q}$
whether there exists
$t \in C^n$ such that $\phi^{\Gamma}(t) \leq u$. 
We refer to $\phi(x_1,\dots,x_n)$ as an \emph{instance} of $\VCSP(\Gamma)$,
and to $u$ as the \emph{threshold}. 
%
Tuples $t \in C^n$ such that $\phi^{\Gamma}(t) \leq u$ are called a \emph{solution for $(\phi,u)$}.
The \emph{cost}  of $\phi$ (with respect to $\Gamma$) is defined to be
$$\inf_{t \in C^n} \phi^{\Gamma}(t).$$
\end{definition}

In some contexts, it is beneficial to consider only a given $\tau$-expression $\phi$ to be the input of $\VCSP(\Gamma)$ (rather than $\phi$ and the threshold $u$) and a tuple $t \in C^n$ is then called a \emph{solution for $\phi$} if the cost of $\phi$ equals $\phi^{\Gamma}(t)$.
Note that in general there might not be any solution; however, this is never the case for VCSPs considered in this paper as they stem from resilience problems.
If there exists a tuple $t \in C^n$ such that $\phi^{\Gamma}(t) < \infty$ then $\phi$ is called \emph{satisfiable}.


For relational structures, VCSPs specialize to CSPs, as explained below.

\begin{remark}\label{rem:crisp-expr}
If $\bA$ is a relational $\tau$-structure, then $\CSP(\bA)$ is the problem of deciding satisfiability of conjunctions of atomic formulas over $\tau$ in $\bA$.
Note that for every $\tau$-expression $\phi(x_1, \dots, x_n)$, $\phi^{\bA}$ defines a crisp relation and can be viewed as a conjunction of atomic formulas, which defines the same relation. Minimizing $\phi^{\bA}$ then corresponds to finding $t \in A^n$ such that $\phi^{\bA}(t)=0$, i.e. $t$ that satisfies all atomic formulas in the conjunction. Therefore, $\VCSP(\bA)$ and $\CSP(\bA)$ are essentially the same problem.
\end{remark}

\begin{example} \label{expl:max-cut-vcsp}
The problem $\VCSP(\Gamma_{\textup{MC}})$ for the valued structure $\Gamma_{\textup{MC}}$ from Example~\ref{expl:max-cut-vs} models the \emph{directed max-cut} problem: given a finite directed multigraph $(V,E)$,
find a partition of the vertices $V$ into two classes $A$ and $B$ such that the number of edges from $A$ to $B$ is maximal. Maximising the number of edges from $A$ to $B$ amounts to minimising the number $e$ of edges within $A$, within $B$, and from $B$ to $A$. So when we associate $A$ to the preimage of $0$
and $B$ to the preimage of $1$, computing the answer corresponds to finding the evaluation map $f \colon V \rightarrow \{0,1\}$  that minimises the value 
\[\sum_{(x,y) \in E} {R^{\Gamma_{\textup{MC}}}(f(x), f(y))},\]
which can be formulated as an instance of 
$\VCSP(\Gamma_{\textup{MC}})$. Conversely, every instance of $\VCSP(\Gamma_{\textup{MC}})$ corresponds to a directed max-cut instance.   
\end{example}

\begin{example} \label{expl:oit}
Consider the relation $\OIT = \{(0,0,1),(0,1,0),(1,0,0) \}$. 
$\CSP(\{0,1\};\OIT)$ is the so called 1-in-3-3-SAT problem, which is known to be NP-complete (see, e.g., \cite[Example 1.2.2]{Book}).
\end{example}

\subsection{Conjunctive queries and resilience} \label{sect:cq}
A first-order formula is called \emph{primitive positive} if it is an existentially quantified conjunction of atomic formulas.
A \emph{conjunctive query} over a (relational) signature $\tau$ is a primitive positive $\tau$-sentence and 
a \emph{union of conjunctive queries} is a (finite) disjunction of conjunctive
queries. 
Note that every existential positive sentence can be written as a union of conjunctive queries.

If $\bA$ is a relational $\tau$-structure and $\mu$ is a union of conjunctive queries over $\tau$ with a quantifier-free part $\mu'(v_1, \dots, v_n)$, we say that $\alpha \colon \{v_1, \dots, v_n\} \to A$ \emph{witnesses that $\bA \models \mu$} if $\bA \models \mu'(\alpha(v_1), \dots, \alpha(v_n))$.
Given conjunctive queries $\mu_1$ and $\mu_2$ over $\tau$, we say that $\mu_1$ is \emph{equivalent} to $\mu_2$ if  $\bA \models \mu_1$ if and only if $\bA \models \mu_2$ for every finite relational $\tau$-structure $\bA$. We say a conjunctive query $\mu$ is \emph{minimal} if every conjunctive query which is equivalent to $\mu$ has at least as many atoms as $\mu$. For every conjunctive query $\mu$, there exists a minimal equivalent query $\mu'$ that can be obtained from $\mu$ by removing zero or
more atoms~\cite{ChandraMerlin}. 

A \emph{multiset relation} on a set $A$ of arity $k$ is a multiset with elements from $A^k$ and a \emph{bag database} $\bA$
over a relational signature $\tau$ consists of a finite domain $A$ and for every $R \in \tau$ of arity $k$, a multiset relation $R^\bA$ of arity $k$. A bag database $\bA$ satisfies a union of conjunctive queries $\mu$ if the relational structure obtained from $\bA$ by forgetting the multiplicities of tuples in its relations satisfies $\mu$.
In the present paper, we study the resilience problem for unions of conjunctive queries in bag semantics; from now on we will refer to this problem just as the \emph{resilience problem}. Let $\tau$ be a finite relational signature and $\mu$ a union of conjunctive queries over $\tau$. The input to the \emph{resilience problem for} $\mu$ consists of 
a bag database $\bA$ over $\tau$, and the 
task is to compute the number of tuples that have to be removed from relations of $\bA$ so that $\bA$ does \emph{not} satisfy $\mu$. This number is called the \emph{resilience} of $\bA$ (with respect to $\mu$). 
As usual, this can be turned into a decision problem where the input also contains a natural number $u \in {\mathbb N}$ and the question is whether the resilience is at most $u$. 
Clearly, $\bA$ does not satisfy $\mu$ if and only if its resilience is $0$. It is easy to see that the resilience problem for any union of conjunctive queries is in NP.

The \emph{canonical database}
of a conjunctive query $\mu$ with relational signature $\tau$ is the relational $\tau$-structure $\bA$ whose domain are the variables of $\mu$ and where $(x_1, \dots, x_k) \in R^{\bA}$ for
$R \in \tau$ of arity $k$ if and only if $\mu$ contains the conjunct $R(x_1, \dots, x_k)$; we denote the canonical database by $\bD_\mu$.

\begin{remark}\label{rem:db}
All terminology introduced for $\tau$-structures also applies to conjunctive queries over $\tau$: by definition, a query has the property if its canonical database has the property. 
\end{remark}
Note that by the above remark, we can talk about homomorphisms between queries and queries being homomorphically incomparable. Observe that if two queries are non-equivalent and minimal, they must be homomorphically incomparable (see, e.g.,~\cite{ChandraMerlin}). 

A relational $\tau$-structure is  \emph{connected} if 
it cannot be written as the disjoint union of two relational $\tau$-structures with non-empty domains.
We show that when classifying the resilience problem for conjunctive queries, it suffices to consider queries that are connected.

\begin{lemma}[\hspace{1sp}{\cite[Lemma 8.5]{Resilience-VCSPs}}]\label{lem:con}
Let $\nu_1,\dots,\nu_k$ be conjunctive queries such that $\nu_i$ does not imply $\nu_j$ if $i \neq j$.
Let $\nu = (\nu_1 \wedge \dots \wedge \nu_k)$ and suppose that $\nu$ occurs in a union $\mu$ of conjunctive queries. For $i \in \{1, \dots, k\}$, let $\mu_i$ be the union of queries obtained by replacing $\nu$ by $\nu_i$ in $\mu$. Then the resilience problem for $\mu$ is NP-hard if the resilience problem for one of the $\mu_i$ is NP-hard.
Conversely, if the resilience problem is in P 
for each $\mu_i$, then the resilience problem for $\mu$ is in P as well.
\end{lemma}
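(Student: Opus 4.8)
The plan is to reduce the claim about $\mu$ to claims about the $\mu_i$ by exploiting the structure of $\nu = \nu_1 \wedge \dots \wedge \nu_k$ as a conjunction of queries, none of which implies another. First I would set up the basic observation: a bag database $\bA$ satisfies $\nu$ exactly when it satisfies \emph{every} $\nu_i$, hence $\bA$ satisfies $\mu$ exactly when, for the disjunct $\nu$, all of $\nu_1, \dots, \nu_k$ hold (or one of the other disjuncts of $\mu$ holds). Thus making $\bA \models \neg\mu$ by removing tuples means: for every disjunct of $\mu$ other than $\nu$, destroy it as before, and for $\nu$ it suffices to destroy \emph{some one} $\nu_i$. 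This already gives, for any fixed $i$, that the resilience of $\bA$ with respect to $\mu_i$ is at least the resilience with respect to $\mu$ (destroying $\nu_i$ is one legal way to destroy $\nu$, so an optimal solution for $\mu_i$ is a feasible solution for $\mu$), which yields a polynomial-time many-one reduction from the resilience problem for $\mu$ to each resilience problem for $\mu_i$: given input $(\bA,u)$ for $\mu$, output $(\bA,u)$ for $\mu_i$ — no, this direction needs more care, so let me instead phrase the reductions in the useful direction.

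For the NP-hardness transfer, I would reduce the resilience problem for some $\mu_i$ to the resilience problem for $\mu$. The key combinatorial point is that the canonical databases $\bD_{\nu_i}$ are pairwise homomorphically incomparable (since $\nu_i$ does not imply $\nu_j$, i.e.\ there is no homomorphism $\bD_{\nu_j} \to \bD_{\nu_i}$ when $i \neq j$, after passing to minimal queries if necessary). Hence, given an input bag database $\bA$ for the resilience problem for $\mu_i$, I would build a database $\bA'$ for $\mu$ that ``looks like'' $\bA$ from the point of view of $\nu_i$ but in which all the other disjuncts of $\mu$, and in particular all the $\nu_j$ with $j \neq i$ inside the disjunct $\nu$, are either already false or cheaper to leave intact. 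Concretely, I expect to take $\bA'$ to be $\bA$ together with, or replaced by, a padded/product construction so that: (a) $\bA' \models \nu_i$ iff $\bA \models \nu_i$; (b) $\bA'$ satisfies no other disjunct of $\mu$, and in particular $\bA' \not\models \nu_j$ for $j\neq i$ so the disjunct $\nu$ is already dead unless $\nu_i$ holds; (c) the multiplicities are arranged so that an optimal resilience solution never touches the padding. Then the resilience of $\bA'$ w.r.t.\ $\mu$ equals the resilience of $\bA$ w.r.t.\ $\mu_i$ (we only ever need to kill the disjunct $\nu$, and the cheapest way to do that, given that the $\nu_j$'s for $j\ne i$ are already absent, is to kill $\nu_i$), giving the desired reduction.

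For the tractability direction, I would argue directly on an input $(\bA,u)$ for $\mu$: compute, for each $i$, the resilience $r_i$ of $\bA$ with respect to $\mu_i$ using the assumed polynomial-time algorithm. I claim the resilience of $\bA$ with respect to $\mu$ is $\min_i r_i$. Indeed, any solution set $S$ of tuples with $\bA \setminus S \not\models \mu$ in particular makes the disjunct $\nu$ false, hence makes some $\nu_i$ false; since the other disjuncts of $\mu$ coincide with those of $\mu_i$ and are also falsified, $S$ is a feasible solution for $\mu_i$, so $|S| \geq r_i \geq \min_i r_i$; conversely an optimal solution for any single $\mu_i$ is a feasible solution for $\mu$ (falsifying $\nu_i$ falsifies $\nu$, and the remaining disjuncts are the same), so the resilience for $\mu$ is $\leq \min_i r_i$. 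Hence $\min_i r_i$ is computable in polynomial time and we answer ``yes'' iff $\min_i r_i \leq u$.

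The main obstacle I anticipate is the NP-hardness direction, specifically the construction of $\bA'$ in which only the disjunct containing $\nu_i$ is ``live'' and the padding is never removed in an optimal solution: one must simultaneously ensure that introducing the extra structure needed to kill the competing $\nu_j$'s (or the other disjuncts of $\mu$) does not accidentally create new copies of $\nu_i$ or make $\nu_i$ cheaper to destroy, and one must control multiplicities carefully (e.g.\ by giving padding tuples very high multiplicity, or by a disjoint-union / gadget argument exploiting connectedness and homomorphic incomparability of minimal queries, cf.\ Remark~\ref{rem:db} and the comment after it) so that the resilience is exactly preserved rather than merely bounded on one side. The homomorphic incomparability of the $\nu_i$ (and, if needed, of the disjuncts of $\mu$ after replacing each by a minimal equivalent query) is what should make such a gadget possible.
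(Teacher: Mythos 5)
The paper itself does not prove this lemma (it is quoted from \cite[Lemma 8.5]{Resilience-VCSPs}), so your attempt has to be judged on its own merits. Your tractability half is correct and complete: since $\bA \not\models \mu$ iff $\bA \not\models \mu_i$ for some $i$, the resilience of $\bA$ with respect to $\mu$ equals $\min_i r_i$, which is exactly the right argument (and it does not even use the non-implication hypothesis).

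The hardness half, however, has a genuine gap, and the specification you give for the intended gadget is backwards. You never actually construct $\bA'$, only list desiderata, and desideratum (b) -- that $\bA' \not\models \nu_j$ for $j \neq i$, ``so the disjunct $\nu$ is already dead unless $\nu_i$ holds'' -- is wrong for a \emph{conjunction}: if some $\nu_j$ with $j \neq i$ fails in $\bA'$, then $\nu = \nu_1 \wedge \dots \wedge \nu_k$ fails outright, irrespective of $\nu_i$, and together with your requirement that no other disjunct of $\mu$ holds this forces the resilience of $\bA'$ with respect to $\mu$ to be $0$; it can then never equal the resilience of $\bA$ with respect to $\mu_i$. (Your sentence reads as if $\nu$ were the \emph{disjunction} of the $\nu_j$.) The correct gadget is the opposite: given an instance $(\bA,u)$ of resilience for $\mu_i$, add disjoint copies of the canonical databases $\bD_{\nu_j}$ for all $j \neq i$, with tuple multiplicities exceeding $u$ (or the total weight of $\bA$), so that every $\nu_j$ with $j \neq i$ \emph{is} satisfied but is too expensive to destroy; then the only affordable way to falsify the conjunct $\nu$ is to falsify $\nu_i$, i.e., one recovers the identity $\mathrm{res}_\mu(\bA') = \min_j \mathrm{res}_{\mu_j}(\bA')$ with all terms $j \neq i$ pushed above the threshold. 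Making this exact still requires the checks you allude to but do not carry out: that the padding creates no new witnesses of $\nu_i$ (here $\nu_j \not\Rightarrow \nu_i$, i.e.\ $\bD_{\nu_j} \not\models \nu_i$, is used, together with connectivity considerations to exclude witnesses straddling $\bA$ and the padding), and that the padding satisfies no other disjunct of $\mu$ -- the latter is exactly where redundancy among the disjuncts of $\mu$ would break the reduction, so it cannot be waved away. As it stands, the hardness direction is not proved.
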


By applying Lemma~\ref{lem:con} finitely many times, we obtain that, when classifying the complexity of the resilience problem for unions of conjunctive
queries, we may restrict our attention to unions of connected conjunctive queries.

\subsection{Connection between resilience and VCSPs}\label{sect:connection}
In this section we summarize the key points of the connection between resilience problems and VCSPs, originally introduced in~\cite{Resilience-VCSPs}.

\begin{definition}\label{def:valued-dual}
Let $\bB$ be a relational $\tau$-structure. Define $\bB_0^1$
to be the valued $\tau$-structure on the same domain as $\bB$ such that
for each $R \in \tau$, $R^{\bB_0^1}(a) = 0$ if $a \in R^{\bB}$ and $R^{\bB_0^1}(a) = 1$ otherwise.
\end{definition}

If $\mu$ is a union of conjunctive queries with signature $\tau$, then a \emph{dual} of $\mu$ is a relational $\tau$-structure $\bB$ with the property that a finite relational $\tau$-structure $\bA$ has a homomorphism to $\bB$ if and only if $\bA$ does not satisfy $\mu$. If $\bB$ and $\bB'$ are both duals of $\mu$, then they are homomorphically equivalent by compactness~\cite[Lemma 4.1.7]{Book}. 

\begin{proposition}[\hspace{1sp}{\cite[Proposition 8.14]{Resilience-VCSPs}}]
\label{prop:connection}
Let $\mu$ be a union of connected conjunctive queries with  signature $\tau$.
Then the resilience problem for $\mu$ 
is polynomial-time equivalent to 
$\VCSP(\bB_0^1)$ for any dual $\bB$ of $\mu$.
\end{proposition}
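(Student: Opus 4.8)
The plan is to establish a two-way polynomial-time reduction between the resilience problem for $\mu$ and $\VCSP(\bB_0^1)$ by making the connection between "removing tuples to destroy all witnesses of $\mu$" and "minimizing the cost of a $\tau$-expression over $\bB_0^1$" completely explicit. Since any two duals of $\mu$ are homomorphically equivalent and homomorphic equivalence of valued templates of the form $\bB_0^1$ preserves the complexity of the VCSP (the standard argument: a $\tau$-expression can be re-interpreted over a homomorphically equivalent structure without changing its cost, because a homomorphism $h\colon \bB\to\bB'$ induces an inequality $R^{\bB'}(h(t))\le R^{\bB}(t)$ pointwise, hence on expressions, and symmetrically), it suffices to fix one convenient dual $\bB$ of $\mu$ and argue for that one. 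Note $\bB$ exists because the class of finite structures not satisfying $\mu$ is closed under homomorphisms and finite disjoint unions, so it has a generic model (a dual) by a standard compactness/amalgamation argument — but in fact for the reduction I only need \emph{some} dual, and the proposition is stated for "any" dual precisely via the homomorphic-equivalence remark above.

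First I would spell out the forward reduction, from resilience to $\VCSP(\bB_0^1)$. Given a bag database $\bA$ over $\tau$ with underlying relational structure $\bA'$ (forgetting multiplicities), I build a $\tau$-expression $\phi_\bA$ whose variables are the elements of $A$: for each relation symbol $R\in\tau$ of arity $k$ and each tuple $a=(a_1,\dots,a_k)\in R^{\bA'}$, include the atomic $\tau$-expression $R(a_1,\dots,a_k)$ with multiplicity equal to the multiplicity of $a$ in the multiset $R^\bA$; take $\phi_\bA$ to be the sum of all these. For an assignment $f\colon A\to B$ (i.e.\ a tuple in $B^{|A|}$), the value $\phi_\bA^{\bB_0^1}(f)$ equals the number of tuples of $\bA$ (counted with multiplicity) that $f$ fails to map into the corresponding relation of $\bB$. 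The minimum of $\phi_\bA^{\bB_0^1}$ over all $f$ is therefore exactly the minimum number of tuples to delete so that the remaining bag database admits a homomorphism to $\bB$, which by the dual property is exactly the minimum number of deletions making the database not satisfy $\mu$ — i.e.\ the resilience of $\bA$. (The only subtlety is that the optimal deletion set and the optimal $f$ determine each other: given $f$, delete precisely the tuples not mapped into $\bB$; given a deletion set $S$ with $\bA'\setminus S \to \bB$, take the witnessing $f$, which costs $\le|S|$. Here I should note that the relevant $\mu$ is a union of \emph{connected} queries, which is what guarantees, via the dual being well-behaved and via Lemma~\ref{lem:con}-type reasoning in the background of~\cite{Resilience-VCSPs}, that "not satisfying $\mu$" is genuinely captured by "homomorphism to $\bB$" in a way compatible with this counting.) Thus a resilience instance $(\bA,u)$ is a yes-instance iff $(\phi_\bA,u)$ is a yes-instance of $\VCSP(\bB_0^1)$, and $\phi_\bA$ is computable in polynomial time.

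Next I would give the reverse reduction, from $\VCSP(\bB_0^1)$ to resilience. Given an instance $(\psi,u)$ of $\VCSP(\bB_0^1)$, where $\psi(x_1,\dots,x_n)=\sum_{i=1}^m \psi_i$ with each $\psi_i$ an atomic $\tau$-expression $R_i(x_{j^i_1},\dots,x_{j^i_{k_i}})$ — note that since all relations of $\bB_0^1$ take values in $\{0,1\}$, no $(=)_0^\infty$ or $\bot$ atoms appear, or if they do they can be handled/discarded — I build a bag database $\bA_\psi$ on domain $\{x_1,\dots,x_n\}$ by putting, for each atom $\psi_i = R(\dots)$, the corresponding tuple into $R^{\bA_\psi}$ (with multiplicities accumulating when the same atom is repeated). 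Then $\bA_\psi$ is essentially $\phi_{\bA_\psi}=\psi$ under the correspondence above (modulo identifying repeated atoms via multiplicities), so the resilience of $\bA_\psi$ with respect to $\mu$ equals the cost of $\psi$ over $\bB_0^1$, and $(\psi,u)$ is a yes-instance iff $(\bA_\psi,u)$ is a yes-instance of the resilience problem. This is again polynomial time. Combining the two directions gives polynomial-time equivalence.

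I expect the main obstacle to be the bookkeeping around the dual: verifying carefully that "$\bA$ does not satisfy $\mu$" is equivalent to "$\bA' \to \bB$" not just for $\bA$ itself but uniformly after arbitrary tuple deletions (so that the \emph{minimum} deletion count matches), and handling the interaction between the \emph{bag} multiplicities in $\bA$ and the \emph{multiplicities} of repeated atomic expressions in the $\tau$-expression — i.e.\ confirming that a tuple with multiplicity $r$ contributes cost $r$ (not $1$) when misassigned, which is what makes the two optimization problems numerically identical. The connectedness hypothesis on the queries in $\mu$ is used precisely to ensure the dual $\bB$ behaves well enough for this (without it, one would first apply Lemma~\ref{lem:con} to reduce to the connected case, but here it is assumed outright). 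Everything else is routine once these points are pinned down; the substance is really the observation that $\bB_0^1$ is designed so that the VCSP cost counts exactly the "badly-placed" tuples, which is the resilience.
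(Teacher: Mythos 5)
This proposition is imported verbatim from~\cite[Proposition 8.14]{Resilience-VCSPs} and the present paper gives no proof of its own, so there is nothing to diverge from: your argument is exactly the standard one behind the cited result and is correct — tuples of the bag database become summands of the $\tau$-expression with matching multiplicities, the dual property turns minimum-cost assignments into minimum deletion sets (partial deletion of a tuple's copies never helping), and the fact that any two duals are homomorphically equivalent makes the choice of $\bB$ immaterial, since composing with a homomorphism never increases the cost of an assignment over a $0$--$1$-valued template. The only loose end is the phrase that $(=)_0^\infty$ and $\bot$ atoms ``can be handled/discarded'': make this explicit by observing that a $\bot$ atom forces every assignment to cost $\infty$, hence a no-instance for any rational threshold, that equality atoms are eliminated by identifying variables, and that a rational threshold may be replaced by its floor — all routine polynomial-time preprocessing.
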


Let $k \in {\mathbb N}$,  let $C$ be a set and $G$ a permutation group on $C$.  An \emph{orbit of $k$-tuples} of $G$ is a set of the form $\{ \alpha(t) \mid \alpha \in G \}$ for some $t \in C^k$. 
A permutation group $G$ on a~countable set is called \emph{oligomorphic} if for every $k \in {\mathbb N}$ there are finitely many orbits of $k$-tuples in $G$~\cite{Oligo}. 
From now on, whenever we write that a structure has an oligomorphic automorphism group, we also imply that its domain is countable. 
Clearly, every valued structure with a finite domain has an oligomorphic automorphism group. 
A countable relational structure has an oligomorphic automorphism group if and only if it is \emph{$\omega$-categorical}, i.e., if all countable models of its first-order theory are isomorphic~\cite{Hodges}.

A relational $\tau$-structure $\bA$ \emph{embeds} into a relational $\tau$-structure $\bB$ if there is an injective map from $A$ to $B$ that preserves all relations of $\bA$ and their complements; the corresponding map is called an \emph{embedding}. The \emph{age} of a relational $\tau$-structure is the class of all finite relational $\tau$-structures that embed into it.
A relational structure $\bB$ with a relational signature $\tau$ is called 
\begin{itemize}
    \item \emph{finitely bounded} if $\tau$ is finite and there exists a universal $\tau$-sentence $\phi$ such that a~finite relational structure $\bA$
    is in the age of $\bB$ 
iff $\bA \models \phi$;
    \item \emph{homogeneous} if every isomorphism between finite substructures of $\bB$ can be extended to an automorphism of~$\bB$.  
\end{itemize}
If $\bB$ is finitely bounded and homogeneous, then $\Aut(\bB)$ is oligomorphic.

\begin{theorem}[\hspace{1sp}{\cite[Theorem 8.12]{Resilience-VCSPs}}]
\label{thm:css}
For every union $\mu$ of connected conjunctive queries over a finite relational signature $\tau$ there exists a $\tau$-structure $\bB_{\mu}$ such that the following statements hold:
\begin{enumerate}
\item $\bB_\mu$ is a reduct of a finitely bounded and homogeneous structure $\bB$.
\item A countable
$\tau$-structure $\bA$ satisfies $\neg \mu$ if and only if it embeds into $\bB_{\mu}$. 
\item $\bB_\mu$ is finitely bounded.
\item $\Aut(\bB)$ and $\Aut(\bB_{\mu})$ are oligomorphic.  
\end{enumerate}
\end{theorem}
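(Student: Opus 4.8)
The plan is to realise the class of finite models of $\neg\mu$ as the $\tau$-reducts of an amalgamation class and to take its Fra\"iss\'e limit, following the Cherlin--Shelah--Shi construction for classes defined by forbidden homomorphisms. Write $\mu = \mu_1 \vee \dots \vee \mu_k$ and let $\bD_{\mu_i}$ be the canonical database of $\mu_i$; each $\bD_{\mu_i}$ is a finite \emph{connected} $\tau$-structure. A $\tau$-structure $\bA$ satisfies $\neg\mu$ precisely when no $\bD_{\mu_i}$ admits a homomorphism to $\bA$. Since each $\mu_i$ has only finitely many variables, $\neg\mu$ is equivalent to a single \emph{universal} $\tau$-sentence, so the class $\mathcal{C}_\mu$ of finite $\tau$-structures satisfying $\neg\mu$ is closed under isomorphism and under substructures; and since a connected structure maps homomorphically to a disjoint union iff it maps to one of the components, $\mathcal{C}_\mu$ is closed under finite disjoint unions and hence has the joint embedding property. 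The missing ingredient is amalgamation: a homomorphic image of some connected $\bD_{\mu_i}$ can straddle a free amalgam through its shared part, so the free amalgam over a common substructure of two members of $\mathcal{C}_\mu$ need not lie in $\mathcal{C}_\mu$.

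To repair this I would expand to a finite signature $\tau' \supseteq \tau$ and construct a class $\mathcal{C}'$ of finite $\tau'$-structures such that: (i) $\mathcal{C}'$ is hereditary, closed under isomorphism, has the joint embedding and amalgamation properties, and is finitely bounded; (ii) the $\tau$-reduct of every member of $\mathcal{C}'$ lies in $\mathcal{C}_\mu$; and (iii) every member of $\mathcal{C}_\mu$ admits at least one expansion in $\mathcal{C}'$. The auxiliary relations of $\tau'$ have to encode just enough bounded combinatorial data — in the spirit of the completion technique of Hubi\v{c}ka--Ne\v{s}et\v{r}il for $\mathrm{Forb}_h$-classes, e.g. a linear order together with finitely many markers recording, for tuples of bounded length, which partial homomorphic images of the $\bD_{\mu_i}$ they may occur in — so that two expanded structures over a common substructure can be amalgamated by freely combining the auxiliary relations and then \emph{completing} the $\tau$-reduct, the markers being designed precisely so that a completion avoiding all homomorphic images of all $\bD_{\mu_i}$ exists. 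That this can be done with finitely many auxiliary relations and finitely many forbidden patterns is the content of the Cherlin--Shelah--Shi theorem in its refined form — every $\mathrm{Forb}_h$-class for finitely many finite connected structures has a universal structure that is a reduct of a finitely bounded homogeneous structure — which I would cite rather than re-derive. Let $\bB$ be the Fra\"iss\'e limit of $\mathcal{C}'$, i.e. the unique countable homogeneous $\tau'$-structure with age $\mathcal{C}'$; it is finitely bounded by (i). Put $\bB_\mu := \bB|_\tau$.

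The four assertions now follow. Item (1) is by construction. For (3), $\Age(\bB_\mu)$ is the class of $\tau$-reducts of $\Age(\bB) = \mathcal{C}'$, which equals $\mathcal{C}_\mu$ by (ii) and (iii); since $\tau$ is finite and $\mathcal{C}_\mu$ is axiomatised by the universal sentence $\neg\mu$, $\bB_\mu$ is finitely bounded. For (4), $\bB$ is finitely bounded and homogeneous, hence $\Aut(\bB)$ is oligomorphic; as $\bB_\mu$ is a reduct of $\bB$ we have $\Aut(\bB) \subseteq \Aut(\bB_\mu)$, so $\Aut(\bB_\mu)$ has, for each $k$, at most as many orbits of $k$-tuples as $\Aut(\bB)$ and is therefore oligomorphic too. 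For (2): if a countable $\tau$-structure $\bA$ embeds into $\bB_\mu$, then every finite substructure of $\bA$ lies in $\Age(\bB_\mu) = \mathcal{C}_\mu$, and since a structure satisfies a universal sentence iff all its finite substructures do, $\bA \models \neg\mu$. Conversely, if $\bA \models \neg\mu$, then every finite substructure of $\bA$ lies in $\mathcal{C}_\mu$, hence expands into $\mathcal{C}'$ by (iii); a compactness argument — using that $\mathcal{C}'$ is cut out by finitely many forbidden $\tau'$-patterns — produces one $\tau'$-expansion $\bA'$ of $\bA$ with $\Age(\bA') \subseteq \mathcal{C}'$, and by universality of the Fra\"iss\'e limit $\bA'$ embeds into $\bB$, whence $\bA \cong \bA'|_\tau$ embeds into $\bB_\mu$.

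The hard part is step two: making $\mathcal{C}'$ simultaneously an amalgamation class and finitely bounded while still inducing exactly $\mathcal{C}_\mu$ on $\tau$-reducts. These requirements pull against one another — amalgamation wants the auxiliary structure rich enough to see and resolve every potential obstruction, whereas finite boundedness forbids recording unboundedly much — and one must additionally verify that the expansion neither excludes a structure of $\mathcal{C}_\mu$ nor admits one outside it. Once such a $\mathcal{C}'$ is in hand, the rest of the argument is routine.
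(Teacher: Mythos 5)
Your proposal is correct and follows essentially the same route as the source this paper cites for the statement (the paper imports it verbatim from \cite[Theorem 8.12]{Resilience-VCSPs}, whose proof is exactly the Cherlin--Shelah--Shi/Hubi\v{c}ka--Ne\v{s}et\v{r}il construction of a finitely bounded homogeneous expansion for the $\mathrm{Forb}_h$-class of the connected canonical databases, followed by the same routine derivation of the four items). Deferring the construction of the amalgamation class $\mathcal{C}'$ to that cited theorem is legitimate here, and your handling of the reduct, age, compactness, and oligomorphicity steps matches the standard argument.
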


The existence of the dual $\bB_\mu$ for a union of connected conjunctive queries $\mu$ is the key to obtaining another dual $\bC_\mu$, which has a strong model-theoretic property introduced in the following definition.
If $G$ is a permutation group on a set $C$, then $\overline G$ denotes the closure of $G$ in the space $C^C$ with respect to the topology of pointwise convergence. 
This is the unique topology such that the closed subsets of $C^C$ are precisely the endomorphism monoids of relational structures; see, e.g.,~\cite[Proposition 4.4.2]{Book}.
Note that 
$\overline G$ might contain some operations that are not surjective, but if $G = \Aut(\bB)$ for some relational structure $\bB$, then all operations in $\overline G$ are still embeddings of $\bB$ into $\bB$ that preserve all first-order formulas. 

\begin{definition}\label{def:mc-core}
A relational structure
$\bB$ with an oligomorphic automorphism group is a \emph{model-complete core} if $\overline{\Aut(\bB)} = \End(\bB)$.
\end{definition}
For every relational structure $\bB$ with an oligomorphic automorphism group, there exists a model-complete core $\bC$ homomorphically equivalent to $\bB$, which is unique up to isomomorphism called \emph{the model-complete core of $\bB$} \cite[Theorem 16]{Cores-journal}, \cite[Proposition 4.7.7]{Book}. Intuitively, the model-complete core of $\bB$ is in a sense a `minimal' structure with the same CSP as $\bB$. If the domain of $\bB$ is finite, then the domain of its model-complete core (usually just called \emph{core}) is also finite.

The \emph{Gaifman graph} of a 
relational structure $\bA$ 
is the undirected graph with vertex set $A$ where $a,b \in A$ are adjacent if and only if $a \neq b$ and there exists a tuple in a relation of $\bA$ that contains both $a$ and $b$. The Gaifman graph of a conjunctive query is the Gaifman graph of the canonical database of that query.

The following is an analogue of Theorem~\ref{thm:css} for the model-complete core of $\bB_\mu$. 
The statements in the theorem should be considered to be previously known; we provide a proof with references to the literature 
for the convenience of the reader.

\begin{theorem}\label{thm:mcc-dual}
Let $\mu$ be a union of connected conjunctive queries over a finite relational signature $\tau$. Then the model-complete core $\bC_\mu$\footnote{In~\cite{Resilience-VCSPs}, the notation $\bC_\mu$ was used for a different dual of $\mu$, which we do not need in this paper.}
of the structure $\bB_\mu$ from Theorem~\ref{thm:css} satisfies the following:
\begin{enumerate}
\item $\bC_\mu$ is a reduct of a finitely bounded and homogeneous structure $\bB$. 
\item A countable
$\tau$-structure $\bA$ satisfies $\neg \mu$ if and only if there is a homomorphism from $\bA$ to $\bC_{\mu}$.
\item If for each query $\nu$ in $\mu$, the Gaifman graph of $\nu$ is complete, then $\bC_\mu$ is homogeneous.
\item $\Aut(\bB)$ and $\Aut(\bC_{\mu})$ are oligomorphic.  
\end{enumerate}
\end{theorem}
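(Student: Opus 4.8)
The plan is to take $\bC_\mu$ to be the model-complete core of the dual $\bB_\mu$ from Theorem~\ref{thm:css} and then to read off the four items from standard facts about model-complete cores of $\omega$-categorical structures together with known structural results about the duals produced by the Cherlin--Shelah--Shi construction. The genuinely substantive parts are items~1 and~3; I expect the main obstacle there to be matching the precise statements needed with what is available in the literature rather than devising new arguments.

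I would begin by noting that $\bB_\mu$ is $\omega$-categorical, since by Theorem~\ref{thm:css}(4) it has an oligomorphic automorphism group and its domain is countable. Hence $\bB_\mu$ has a model-complete core $\bC_\mu$ which is $\omega$-categorical, homomorphically equivalent to $\bB_\mu$, and unique up to isomorphism (\cite[Theorem 16]{Cores-journal}, \cite[Proposition 4.7.7]{Book}); I take this structure as $\bC_\mu$. Being $\omega$-categorical, $\bC_\mu$ has an oligomorphic automorphism group, so item~4 follows once item~1 provides a witnessing $\bB$ that is finitely bounded and homogeneous, hence oligomorphic.

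For item~2 I would argue as follows. Since $\bB_\mu$ is countable and embeds into itself, Theorem~\ref{thm:css}(2) yields $\bB_\mu \models \neg\mu$; as $\mu$ is existential positive and hence preserved under homomorphisms, every structure admitting a homomorphism to $\bB_\mu$ satisfies $\neg\mu$. Conversely, a countable $\tau$-structure $\bA$ with $\bA \models \neg\mu$ embeds into $\bB_\mu$ by Theorem~\ref{thm:css}(2), in particular maps homomorphically to it. Composing with the two homomorphisms that witness the homomorphic equivalence of $\bB_\mu$ and $\bC_\mu$ gives that $\bA \models \neg\mu$ if and only if $\bA$ maps homomorphically to $\bC_\mu$, which is item~2.

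It remains to establish items~1 and~3, and here I would use that $\bC_\mu$ is, up to isomorphism, the structure supplied by the Cherlin--Shelah--Shi theorem for the finite family $\mathcal N = \{\bD_\nu \mid \nu \text{ a disjunct of } \mu\}$ of canonical databases: a finite $\tau$-structure maps homomorphically into such a structure precisely when it admits no homomorphism from any member of $\mathcal N$, i.e.\ when it satisfies $\neg\mu$, and this structure may be taken to be an $\omega$-categorical model-complete core, so uniqueness of model-complete cores identifies it with $\bC_\mu$. For item~1 I would invoke the fact that, for a finite family of finite structures, the associated Cherlin--Shelah--Shi structure is a reduct of a finitely bounded homogeneous structure, which follows from the Ramsey-theoretic analysis of classes defined by finitely many forbidden homomorphisms due to Hubi\v{c}ka and Ne\v{s}et\v{r}il; equivalently, one may use the general principle that the model-complete core of a reduct of a finitely bounded homogeneous structure --- which $\bB_\mu$ is, by Theorem~\ref{thm:css}(1) --- is again a reduct of a finitely bounded homogeneous structure. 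For item~3 I would use that when every member of $\mathcal N$ has a complete Gaifman graph then so does every quotient of it, and the class of finite models of $\neg\mu$ is then exactly the class of finite structures omitting these (finitely many) quotients as induced substructures; because the forbidden substructures are \emph{irreducible} in this strong sense, this class --- and the subclass of it needed to describe the model-complete core --- admits amalgamation, so its Fra\"iss\'e limit is homogeneous, and one checks that it is a model-complete core and a dual of $\mu$, hence isomorphic to $\bC_\mu$. The step I expect to require the most care is this last one: verifying that the appropriate amalgamation class really produces the model-complete core (in particular, that every endomorphism of its limit is an embedding), and pinning down where exactly this is proved in the Cherlin--Shelah--Shi literature.
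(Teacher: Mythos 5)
Your treatment of items~2 and~4 is correct and matches the paper, and for item~1 the clause you offer as an afterthought (``the model-complete core of a reduct of a finitely bounded homogeneous structure is again a reduct of a finitely bounded homogeneous structure'') is in fact the paper's entire argument, via \cite{MottetPinskerCores} (see \cite[Corollary 7.5.15]{BodAutomorphismGroups}). But the route you present as primary does not work: the Cherlin--Shelah--Shi dual cannot ``be taken to be an $\omega$-categorical model-complete core'', so uniqueness of model-complete cores does not identify it with $\bC_\mu$. For $\mu_e$ the CSS dual is an infinite structure with empty edge relation, whose constant endomorphisms are not in the closure of its automorphism group, while $\bC_{\mu_e}$ is a one-element structure; for $\mu_c$ the CSS dual is (homomorphically equivalent to) the generic loopless digraph without $2$-cycles, whereas $\bC_{\mu_c}$ is the generic tournament. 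Note also that your two formulations of item~1 are not equivalent: that $\bB_\mu$ is a reduct of a finitely bounded homogeneous structure is just Theorem~\ref{thm:css}(1); transferring this property to the core $\bC_\mu$ is exactly the nontrivial content supplied by the Mottet--Pinsker result, not by Hubi\v{c}ka--Ne\v{s}et\v{r}il applied to $\bB_\mu$.

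The same misidentification leaves a genuine gap in your item~3. When all Gaifman graphs are complete, the class of finite models of $\neg\mu$ is indeed closed under free amalgamation and its Fra\"iss\'e limit is a homogeneous dual $\bH$ (this is \cite[Theorem 8.13]{Resilience-VCSPs}, which the paper simply cites); but $\bH$ is in general \emph{not} a model-complete core --- for $\mu_c$ it is the generic oriented graph, not the generic tournament --- so the step ``one checks that it is a model-complete core \dots hence isomorphic to $\bC_\mu$'' fails, and your fallback that ``the subclass of it needed to describe the model-complete core'' again amalgamates is precisely the assertion that would have to be proved; you rightly flag this as the delicate point, but it is not a routine verification and no argument is given. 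The paper closes this gap differently: it passes from $\bH$ to its model-complete core, uses that the model-complete core of a homogeneous structure is again homogeneous (\cite[Proposition 4.7.7]{Book}), and then uses uniqueness of model-complete cores (all duals of $\mu$ are homomorphically equivalent) to conclude that this core is $\bC_\mu$. So either import that preservation-of-homogeneity fact, or explicitly identify the age of the core and prove amalgamation for it; as written, the identification of your Fra\"iss\'e limit with $\bC_\mu$ is false in general.
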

\begin{proof}
Item (1) follows from results in~\cite{MottetPinskerCores}; see~\cite[Corollary 7.5.15]{BodAutomorphismGroups} for an explicit reference.


Item (2) is a consequence of $\bC_{\mu}$ being homomorphically equivalent to $\bB_{\mu}$. 


To prove (3), suppose that for each query $\nu$ in $\mu$, the Gaifman graph of $\nu$ is complete. By \cite[Theorem 8.13]{Resilience-VCSPs}, there exists a dual $\bH$  of $\mu$, which is homogeneous. By \cite[Proposition 4.7.7]{Book}, the model-complete core of $\bH$  is also homogeneous. Note that $\bC_\mu$ is homomorphically equivalent to $\bH$  as they are both duals of $\mu$ and hence, by uniqueness, it is the model-complete core of $\bH$ .

For item (4), note that the automorphism group of $\bB$ is oligomorphic since it is homogeneous with finite relational signature. 
The automorphism group of $\bC_{\mu}$ is oligomorphic, since this property is clearly preserved under taking reducts. 
\end{proof}

Note that since $\bC_\mu$ is unique up to isomorphism and homomorphic equivalence is transitive, the structure $\bC_\mu$ does not depend on the concrete choice of $\bB_\mu$. For a union of connected conjunctive queries $\mu$, let $\Delta_\mu := (\bC_\mu)_0^1$. In most results, this will be the valued structure to which we apply results about $\bB_0^1$ for a dual $\bB$ of $\mu$.

\subsection{Expressive power}\label{sect:expr}
The concept of \emph{expressive power} introduced in this section is a basis for polynomial-time gadget reductions between VCSPs.

\begin{definition}
Let $A$ be a set and $R,R' \in {\mathscr R}_A$.
We say that $R'$ can be obtained from $R$ by 
\begin{itemize}
\item \emph{projecting} if $R'$ is of arity $k$, $R$ is of arity $k+n$ and for all $s \in A^k$ 
\[R'(s) = \inf_{t \in A^n} R(s,t).\]
\item \emph{non-negative scaling} 
 if there exists $a \in {\mathbb Q}_{\geq 0}$ such that $R' = a R$;
 \item  \emph{shifting} if there exists $a \in {\mathbb Q}$ such that $R' = R + a$. 
\end{itemize}
If $R$ is of arity $k$, then the relation that contains all minimal-value tuples of $R$ is 
\[\Opt(R) := \{t \in \Feas(R) \mid R(t) \leq R(s) \text{ for every } s \in A^k\}.\]
\end{definition}

Note that $\inf_{t \in A^n} R(s,t)$ in item (1) might be irrational or $-\infty$. If this is the
case, then $\inf_{t \in A^n} R(s,t)$ does not express a valued relation
because valued relations must have weights from ${\mathbb Q} \cup \{\infty\}$.
However, if $R$ is preserved by all permutations of
an oligomorphic automorphism group, then $R$ 
attains only finitely many values and therefore this is never the case.

If $\mathscr{S} \subseteq \mathscr{R}_A$, then an atomic expression over $\mathscr{S}$ is an atomic $\tau$-expression where $\tau=\mathscr{S}$. We say that $\mathscr{S}$ is \emph{closed under forming sums of atomic expressions} if it contains all valued relations defined by sums of atomic expressions over $\mathscr{S}$.

\begin{definition}[valued relational clone]\label{def:wrelclone}
A \emph{valued relational clone} (over a set $C$) is 
a subset of 
${\mathscr R}_C$ 
that 
is closed under forming sums of atomic expressions,
projecting, shifting, non-negative scaling, $\Feas$, and $\Opt$; we refer to expressions formed this way as \emph{pp-expressions}.
For a valued structure $\Gamma$ with the domain $C$,  
we write 
$\langle \Gamma \rangle$ for the smallest relational clone that contains the valued relations of $\Gamma$. 
If $R \in \langle \Gamma \rangle$, we say that $\Gamma$ \emph{pp-expresses} $R$. 
\end{definition}

The acronym `pp' stands for primitive positive, since the concept of pp-expressions for valued structures is a~generalization of primitive positive definitions used for reductions between CSPs.

\subsection{Fractional maps}
Let $A$ and $B$ be sets.
We equip the space $A^B$ of functions from $B$ to $A$ with the topology of pointwise convergence, where $A$ is taken to be discrete. 
In this topology, a~basis of open sets is given by ${\mathscr S}_{s,t} := \{f \in A^B \mid f(s)=t\}$
for $s \in B^k$ and $t \in A^k$ for some $k \in {\mathbb N}$.
For any topological space $T$, we denote by $\mathcal{B}(T)$
the Borel $\sigma$-algebra on $T$, i.e., the smallest subset of the powerset ${\mathcal P}(T)$ which contains all open sets and is closed under countable intersection and complement. We write $[0,1]$ for the set $\{x \in {\mathbb R} \mid 0 \leq x \leq 1\}$.

\begin{definition}[fractional map]
Let $A$ and $B$ be sets. A~\emph{fractional map} from $B$ to $A$ is a~probability distribution 
$(A^B, \mathcal{B}(A^B),\omega \colon \mathcal{B}(A^B) \to [0,1]),$
that is, $\omega(A^B) = 1$ and $\omega$ is countably additive: if $S_1, S_2,\dots \in \mathcal{B}(A^B)$ are disjoint, then \[\omega\left(\bigcup_{i \in {\mathbb N}} S_i\right) = \sum_{i \in {\mathbb N}} \omega(S_i).\]
\end{definition}

We often use $\omega$ for both the entire fractional map and for the map $\omega \colon \mathcal{B}(A^B) \to [0,1]$.

The set $[0,1]$ carries the topology inherited from  the standard topology on ${\mathbb R}$. We also view ${\mathbb R} \cup \{\infty\}$ as a~topological space with a~basis of open sets given by
all open intervals
$(a,b)$ for $a,b \in {\mathbb R}$, $a<b$ and additionally all sets of the form $\{x \in {\mathbb R} \mid x > a\} \cup \{\infty\}$ (thus, $\mathbb{R} \cup \{ \infty \}$ is equipped with its order topology when ordered in the natural way).

A \emph{(real-valued) random variable} is a~\emph{measurable function} $X \colon T \to {\mathbb R} \cup \{\infty\}$, i.e., pre-images of elements of $\mathcal{B}({\mathbb R} \cup \{\infty\})$ under $X$ are in $\mathcal{B}(T)$. 
If $X$ is a~real-valued random variable, then the \emph{expected value of $X$ (with respect to a~probability distribution $\omega$)} is denoted by $E_\omega[X]$ and is defined 
via the Lebesgue integral 
\[ E_\omega[X] := \int_T X d \omega. \]

In the rest of the paper, 
we will work exclusively with topological spaces $T$ of the form $A^B$ for some sets $A$ and $B$.

\subsection{Pp-constructions}\label{sect:pp-const}
In this section, we introduce a~concept of pp-constructions which generalize pp-expressions and provide polynomial-time reductions between VCSPs. We first define fractional homomorphisms.

\begin{definition}[fractional homomorphism]\label{def:frac-hom}
Let $\Gamma$ and $\Delta$ be valued $\tau$-structures with domains $C$ and $D$, respectively. A~\emph{fractional homomorphism} from
$\Delta$ to $\Gamma$ is a~
fractional map $\omega$ from $D$ to $C$
 such that for every $R \in \tau$ of arity $k$ 
and every tuple $t \in D^k$ 
it holds for the random variable $X \colon C^D \rightarrow \mathbb{R}\cup\{\infty\}$ given  by
$f \mapsto R^{\Gamma}(f(t))$ that 
$E_\omega[X]$
exists and that 
$E_\omega[X]
\leq R^{\Delta}(t).$
\end{definition}

We refer to \cite{Resilience-VCSPs} for a~detailed introduction to fractional homomorphisms.
Two valued $\tau$-structures $\Gamma$ and $\Delta$ are said to be \emph{fractionally homomorphically equivalent}, if there is a fractional homomorphism from $\Gamma$ to $\Delta$ and from $\Delta$ to $\Gamma$.

\begin{remark} \label{rem:fhom-eq}
If $\mu$ is a union of conjunctive queries with duals $\bB$ and $\bC$, then $\bB$ and $\bC$ are homomorphically equivalent. Hence, $\bB_0^1$ and $\bC_0^1$ are fractionally homomorphically equivalent witnessed by fractional maps where the respective homomorphisms have probability $1$.
\end{remark}

As a next step towards the definition of a pp-construction, we define pp-powers.
\begin{definition}[pp-power]
Let $\Gamma$ be a valued structure with a domain $C$ and let $d \in {\mathbb N}$. Then a ($d$-th) \emph{pp-power}
of $\Gamma$ is a valued structure $\Delta$ with the domain 
$C^d$ such that for every valued relation $R$ of $\Delta$ of arity $k$ there exists a valued relation $S$ of arity $kd$ in $\langle \Gamma \rangle$ such that 
$$R((x^1_1,\dots,x^1_d),\dots,(x^k_1,\dots,x^k_d)) = S(x^1_1,\dots,x^1_d,\dots,x^k_1,\dots,x^k_d).$$ 
\end{definition}

We can now define the notion of a pp-construction.
\begin{definition}[pp-construction]
\label{def:pp-constr}
Let $\Gamma, \Delta$ be valued structures. Then  $\Delta$ has a~\emph{pp-construction} in $\Gamma$ if $\Delta$ 
is fractionally homomorphically equivalent to a~structure $\Delta'$ which is a~pp-power of $\Gamma$. 
\end{definition}

The relation of pp-constructability is transitive: if $\Gamma_1$, $\Gamma_2$, and $\Gamma_3$ are valued structures such that $\Gamma_1$ pp-constructs $\Gamma_2$ and $\Gamma_2$ pp-constructs $\Gamma_3$, then $\Gamma_1$ pp-constructs $\Gamma_3$ \cite[Lemma 5.12]{Resilience-VCSPs}.
Note that whenever $\mu$ is a union of connected conjunctive queries and $\Delta_\mu$ pp-constructs a valued structure $\Gamma$, then for every dual $\bB$ of $\mu$, the valued structure $\bB_0^1$ pp-constructs $\Gamma$, because $\Delta_\mu$ and $\bB_0^1$ are fractionally homomorphically equivalent (Remark~\ref{rem:fhom-eq}). 

The motivation for introducing pp-constructions stems from the following lemma: pp-constructions give rise to polynomial-time reductions.
\begin{lemma}[\hspace{1sp}{\cite[Corollary~5.10 and~5.11]{Resilience-VCSPs}}]\label{lem:hard} 
Let $\Gamma$ and $\Delta$ be valued structures 
with finite signatures and oligomorphic automorphism groups such that $\Delta$ has a pp-construction in $\Gamma$. Then there is a~polynomial-time reduction from 
$\VCSP(\Delta)$ to $\VCSP(\Gamma)$. In particular, if $\Delta = (\{0,1\}; \OIT)$, then $\VCSP(\Gamma)$ is NP-hard.
\end{lemma}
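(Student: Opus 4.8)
The plan is to prove the stronger intermediate statement that underlies the cited corollaries, namely that pp-constructibility yields polynomial-time reductions, and then specialise to $\OIT$. Since $\Delta$ has a pp-construction in $\Gamma$ (Definition~\ref{def:pp-constr}), it is fractionally homomorphically equivalent to a structure $\Delta'$ that is a $d$-th pp-power of $\Gamma$, so it suffices to chain together three reductions: \textbf{(a)} if $\Delta$ and $\Delta'$ are fractionally homomorphically equivalent valued structures with the same finite signature and oligomorphic automorphism groups, then $\VCSP(\Delta)$ and $\VCSP(\Delta')$ are literally the same decision problem; \textbf{(b)} if $\Delta'$ is a $d$-th pp-power of $\Gamma$, then $\VCSP(\Delta')$ reduces in polynomial time to $\VCSP(\Gamma_{\mathscr F})$, where $\Gamma_{\mathscr F}$ is the expansion of $\Gamma$ by a finite set $\mathscr F \subseteq \langle \Gamma \rangle$; and \textbf{(c)} for any valued structure $\Gamma$ with finite signature and oligomorphic automorphism group and any finite $\mathscr F \subseteq \langle \Gamma \rangle$, $\VCSP(\Gamma_{\mathscr F})$ reduces in polynomial time to $\VCSP(\Gamma)$. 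Before starting I would record that $\Delta'$ can be taken with a finite signature (keep only the relations needed to witness fractional equivalence with $\Delta$) and that it has an oligomorphic automorphism group, since the diagonal action of $\Aut(\Gamma)$ on $C^d$ has finitely many orbits of $k$-tuples (a $k$-tuple over $C^d$ is a $kd$-tuple over $C$); consequently every relation of $\langle \Gamma \rangle$, and every $\tau$-expression evaluated over $\Gamma$ or $\Delta'$, takes only finitely many values, so all infima occurring below are attained and rational.

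For (a): a fractional homomorphism $\omega$ from $\Delta'$ to $\Delta$ satisfies $E_\omega[f \mapsto R^{\Delta}(f(t))] \leq R^{\Delta'}(t)$ for each atom, and by linearity of expectation the same inequality holds with an arbitrary $\tau$-expression $\phi$ in place of $R$ and an arbitrary tuple $t$; since $E_\omega[f \mapsto \phi^{\Delta}(f(t))]$ is an average of values $\phi^{\Delta}(f(t))$, it is at least $\inf_s \phi^{\Delta}(s)$. Taking the infimum over $t$ shows that the cost of $\phi$ over $\Delta$ is at most its cost over $\Delta'$; the fractional homomorphism in the other direction gives the reverse inequality, so the two costs coincide. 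As they are attained rational values, ``cost at most $u$'' is the same question for $\Delta$ and $\Delta'$, and (a) is witnessed by the identity map on instances.

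For (b): given an instance $\phi'(x_1,\dots,x_n)$ of $\VCSP(\Delta')$, replace each variable $x_i$ (ranging over $C^d$) by a block $(x_i^1,\dots,x_i^d)$ of fresh variables ranging over $C$, and each atom $R(x_{i_1},\dots,x_{i_k})$ of $\phi'$ by the atomic expression $S_R(x_{i_1}^1,\dots,x_{i_1}^d,\dots,x_{i_k}^1,\dots,x_{i_k}^d)$ over $\langle \Gamma \rangle$, where $S_R \in \langle \Gamma \rangle$ is the arity-$kd$ relation witnessing the pp-power; keep the threshold. This is computable in linear time (the signature of $\Delta'$ is finite) and preserves the cost, so it reduces $\VCSP(\Delta')$ to $\VCSP(\Gamma_{\mathscr F})$ for $\mathscr F$ the finite set of these $S_R$. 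Step (c) is then an induction on the pp-expressions generating the members of $\mathscr F$ (Definition~\ref{def:wrelclone}), eliminating occurrences of these relations one closure operation at a time: sums of atomic expressions and projection are handled by substitution and introduction of fresh existentially quantified variables; shifting $R + a$ by decreasing the threshold by $a$ times the number of occurrences; non-negative scaling $aR$ with $a = p/q \in \Q_{\geq 0}$ by multiplying the instance through by $q$ (replace every atom by $q$ copies of itself and every occurrence of $aR$ by $p$ copies of $R$) and the threshold by $q$; $\Feas(R)$ by replacing each occurrence with $\varepsilon(R - m_R)$, where $m_R$ is the least finite value of $R$ and the rational $\varepsilon > 0$ is small enough that the total contribution of these terms on a feasible assignment stays below the smallest nonzero gap between an achievable cost and the (rounded) threshold; and, dually, $\Opt(R)$ by replacing each occurrence with $\lambda(R - m_R)$ for a rational $\lambda$ large enough to force every minimiser onto $\Opt(R)$, adjusting the threshold by $\lambda m_R$ accordingly. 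Since $|\mathscr F|$ and the values of $\Gamma$ are fixed, while the $\varepsilon$'s and $\lambda$'s needed have bit-size polynomial in the number of atoms, the composition stays polynomial-time.

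Composing (a), (b), (c) gives the claimed polynomial-time reduction $\VCSP(\Delta) \to \VCSP(\Gamma)$. For the final assertion, $\VCSP(\{0,1\};\OIT)$ coincides with $\CSP(\{0,1\};\OIT)$ by Remark~\ref{rem:crisp-expr}, i.e., with 1-in-3-3-SAT, which is NP-complete (Example~\ref{expl:oit}); as $(\{0,1\};\OIT)$ has a finite signature and, being finite, an oligomorphic automorphism group, the reduction applies and $\VCSP(\Gamma)$ is NP-hard. I expect the main obstacle to be the treatment of the crisp operators $\Feas$ and $\Opt$ in step (c): neither can be simulated by a single finite expression, so the argument must exploit oligomorphicity to bound the number of values of each relation in $\langle \Gamma \rangle$ and then pick instance-dependent scaling factors; verifying that these factors and the adjusted thresholds have polynomial bit-size, so that the whole construction is a genuine many-one polynomial-time reduction, is the delicate point.
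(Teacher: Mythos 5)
The paper does not actually prove this lemma; it is imported verbatim from \cite[Corollaries 5.10 and 5.11]{Resilience-VCSPs}, so there is no in-paper argument to compare against. Your reconstruction follows what is essentially the intended route: (a) fractional homomorphic equivalence preserves the cost of every $\tau$-expression (linearity of expectation plus the fact that the expectation dominates the infimum), so the two VCSPs coincide as decision problems; (b) a pp-power is handled by the variable-blocking substitution (note you also need to translate equality atoms over $C^d$ into sums of $d$ equalities over $C$, a triviality); (c) finitely many relations from $\langle\Gamma\rangle$ are eliminated one closure operation at a time, using oligomorphicity to ensure that all relations in $\langle\Gamma\rangle$ take finitely many rational values, so infima are attained and achievable costs have a fixed granularity $1/L$. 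This decomposition is correct and is the standard proof.

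The one place where the argument as written would fail without a fix is the elimination of $\Opt$ (and, relatedly, the realization of rational coefficients by atom copies). Choosing $\lambda$ merely ``large enough to force every minimiser onto $\Opt(R)$'' (i.e.\ $\lambda$ of order $nV/\delta$, with $V$ the largest absolute finite value and $\delta$ the value gap) is not sound for the \emph{decision} problem: if the original instance is infeasible over $\Gamma_{\mathscr F}$ but the given threshold $u$ is enormous, the penalized instance has finite cost roughly $\lambda\delta - nV$ and may slip under $u$, flipping the answer. Taking instead $\lambda \geq (u+nV)/\delta$ repairs correctness but destroys polynomiality, because rational weights must be realized by unary repetition of atoms, so it is the \emph{magnitude} of $\lambda$ (and of the denominators $q$ in the scaling step), not its bit-size, that must be polynomial in the instance; your phrase ``bit-size polynomial'' is too weak here. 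The standard fix is to first normalize the threshold: replace $u$ by $\min(u, nV)$ (this never changes the answer, since every feasible assignment has cost at most $nV$), after which $\lambda$ and all denominators are polynomially bounded in magnitude and the whole composition is a genuine polynomial-time many-one reduction. The analogous care in the $\Feas$ step (choosing $\varepsilon$ against the $1/L$ granularity and a rounded threshold) you already handle correctly.
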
 

\begin{example}\label{expl:max-cut-hard}
Recall the valued structure $\Gamma_{\textup{MC}}$ from Example~\ref{expl:max-cut-vs}. It is known that $\Gamma_{\textup{MC}}$ pp-constructs $(\{0,1\},\OIT)$~\cite[Example 2.18]{ThesisZaneta} and by Lemma~\ref{lem:hard},
$\VCSP(\Gamma_{\textup{MC}})$ is NP-hard.
\end{example}

\subsection{Fractional polymorphisms}\label{sect:fpol}
We now introduce  \emph{fractional polymorphisms} of valued structures, which generalize polymorphisms of relational structures. 
For valued structures with a~finite domain, our definition specialises to the established notion of a~fractional polymorphism which has been used 
to study the complexity of VCSPs for valued structures over finite domains (see, e.g.~\cite{ThapperZivny13}); it is known that fractional polymorphisms of a finite-domain valued structure capture the complexity of its VCSP up to polynomial-time reductions~\cite{VCSP-Galois, FullaZivny}. Our definition is taken from \cite{Resilience-VCSPs} and allows arbitrary probability distributions in contrast to \cite{ViolaThesis,SchneiderViola,ViolaZivny}.

Let $\ell \in {\mathbb N}$. 
A \emph{fractional operation on $A$ of arity $\ell$} is
a fractional map from $A^\ell$ to $A$. 
The set of all fractional operations on a set $A$ of arity $\ell$ is denoted by ${\mathscr F}^{(\ell)}_A$.

\begin{definition}\label{def:pres}
A fractional operation $\omega \in {\mathscr F}_A^{(\ell)}$ \emph{improves}
a valued relation $R \in {\mathscr R}^{(k)}_A$ if for all $t^1,\dots,t^{\ell} \in A^k$ 
\begin{align}
E := E_\omega[f \mapsto R(f(t^1,\dots,t^{\ell}))]
\text{ exists, and } \quad 
E \leq
\frac{1}{\ell} \sum_{j = 1}^{\ell} R(t^j). \label{eq:fpol}
\end{align}
\end{definition}

Note that~\eqref{eq:fpol} has the interpretation that 
the expected value of $R(f(t^1,\dots,t^\ell))$ is at most the average of the values $R(t^1),\dots$, $R(t^\ell)$. 

\begin{definition}[fractional polymorphism]
If a~fractional operation $\omega$ improves every valued relation in a~valued structure $\Gamma$, then $\omega$ is called a~\emph{fractional polymorphism 
of $\Gamma$}; the set of all fractional polymorphisms of $\Gamma$ is denoted by $\fPol(\Gamma)$.
\end{definition}

\begin{remark}\label{rem:frac-pol-frac-hom}
A fractional polymorphism of arity $\ell$ of a~valued $\tau$-structure $\Gamma$ might also be viewed as a~fractional homomorphism from a~specific $\ell$-th pp-power of $\Gamma$, which we denote by $\Gamma^{\ell}$, to $\Gamma$: 
the domain of $\Gamma^{\ell}$ is $C^{\ell}$,
and for every $R \in \tau$ of arity $k$ we have
\[R^{\Gamma^{\ell}}((x^1_1,\dots,x^1_{\ell}),\dots,(x^k_1,\dots,x^k_{\ell})) 
:= \frac{1}{\ell} \sum_{i=1}^\ell R^{\Gamma}(x^1_i,\dots,x^{k}_i).\]
\end{remark}

\begin{example}\label{expl:id}
Let $A$ be a~set and $\pi^\ell_i \in {\mathscr O}^{(\ell)}_A$ be the $i$-th projection of arity $\ell$, which is given by $\pi^\ell_i(x_1,\dots,x_\ell) = x_i$.
The fractional operation $\Id_\ell$ of arity $\ell$ such that $\Id_\ell(\pi^{\ell}_{i}) = \frac{1}{\ell}$ 
for every $i \in \{1,\dots,\ell\}$ is a~fractional polymorphism of every valued structure with domain $A$. 
\end{example}

\begin{lemma}[Lemma 6.8 in~\cite{Resilience-VCSPs}]\label{lem:easy-Imp-fPol}
Let $\Gamma$ be a~valued $\tau$-structure $\Gamma$ 
over a~countable domain $C$. Then every valued relation
$R \in \langle \Gamma \rangle$ is improved by all fractional polymorphisms of $\Gamma$.
\end{lemma}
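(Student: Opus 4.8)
The plan is to fix a fractional polymorphism $\omega \in \fPol(\Gamma)$ of arity $\ell$ and show that the set of valued relations improved by $\omega$ is itself a valued relational clone containing the relations of $\Gamma$; since $\langle \Gamma \rangle$ is by definition the *smallest* valued relational clone with this property, it follows that every $R \in \langle \Gamma \rangle$ is improved by $\omega$. So the real content is a closure argument: I would show that the class
\[
\mathscr{I}_\omega := \{ R \in {\mathscr R}_C \mid \omega \text{ improves } R \}
\]
is closed under each of the six operations in Definition~\ref{def:wrelclone}, namely forming sums of atomic expressions, projecting, shifting, non-negative scaling, $\Feas$, and $\Opt$. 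The relations of $\Gamma$ lie in $\mathscr{I}_\omega$ by the hypothesis that $\omega$ is a fractional polymorphism, so once closure is established we are done.

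First I would handle the three ``algebraic'' closure properties, which are immediate from linearity and monotonicity of expectation: if $R' = aR + b$ with $a \in \Q_{\geq 0}$, $b \in \Q$ (covering both non-negative scaling and shifting), then for any tuples $t^1, \dots, t^\ell$ the random variable $f \mapsto R'(f(t^1, \dots, t^\ell))$ is $a \cdot (f \mapsto R(f(t^1,\dots,t^\ell))) + b$, so its expectation exists, equals $a E + b$, and is bounded by $a \cdot \frac{1}{\ell}\sum_j R(t^j) + b = \frac{1}{\ell}\sum_j R'(t^j)$ using $a \geq 0$ and the inequality~\eqref{eq:fpol} for $R$; one has to spell out the conventions for $\infty$ but nothing surprising happens. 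For sums of atomic expressions, if $R(x_1,\dots,x_n) = \sum_{i=1}^m R_i(\text{subtuple of variables})$ with each $R_i \in \mathscr{I}_\omega$, then again the random variable attached to $R$ is the sum of those attached to the $R_i$ (evaluated at the relevant projections of the tuples $t^1,\dots,t^\ell$, and here one uses that $\omega$ is applied componentwise so that the restriction to a subtuple of coordinates behaves correctly), and linearity of expectation together with summing the inequalities~\eqref{eq:fpol} for the $R_i$ gives the claim; the crisp relations $(=)_0^\infty$ and $\bot$ used in atomic expressions are trivially improved by any $\omega$.

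The projection case is where the argument needs care and is the step I expect to be the main obstacle: if $R'(s) = \inf_{t \in C^n} R(s,t)$ with $R \in \mathscr{I}_\omega$, I want to show $\omega$ improves $R'$. Given $s^1, \dots, s^\ell \in C^k$, pick near-optimal witnesses $t^j \in C^n$ for each $s^j$; then $R(f(s^1,t^1),\dots,f(s^\ell,t^\ell))$ dominates $R'(f(s^1,\dots,s^\ell))$ pointwise in $f$ (since $\omega$ acts componentwise, $f$ applied to the concatenated tuples restricts to $f$ applied to the $s$-parts), so taking expectations, using~\eqref{eq:fpol} for $R$, and letting the witnesses approach the infima yields the inequality for $R'$ — modulo the measurability of $f \mapsto R'(f(s^1,\dots,s^\ell))$, which holds because in our setting $R'$ takes only finitely many values (the remark after Definition~\ref{def:wrelclone}: relations here are preserved by an oligomorphic automorphism group), so the preimages are clopen. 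The $\Feas$ and $\Opt$ cases are then similar in spirit: $\Feas(R)$ is the crisp relation which is $0$ exactly where $R < \infty$, and one checks that if a componentwise application of the near-optimal $f$ keeps all $R$-values finite it keeps the $\Feas(R)$-value at $0$; $\Opt(R)$ similarly uses that $\omega$ cannot increase the average value, so it maps minimizers to minimizers. Throughout, the only genuinely delicate points are (i) existence of the expectations, handled uniformly by the finiteness-of-values observation for oligomorphic templates, and (ii) keeping track of which coordinates of the tuples are being plugged in when atomic expressions reuse or omit variables.
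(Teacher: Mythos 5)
Your overall strategy is the right one, and it is essentially the argument behind the cited result: this paper does not prove Lemma~\ref{lem:easy-Imp-fPol} itself but imports it from~\cite{Resilience-VCSPs}, where the proof is precisely an induction over pp-expressions, i.e., one fixes $\omega \in \fPol(\Gamma)$ and verifies that the set of valued relations improved by $\omega$ contains the relations of $\Gamma$, the crisp relations $(=)_0^\infty$ and $\bot$, and is closed under sums of atomic expressions, projecting, shifting, non-negative scaling, $\Feas$, and $\Opt$. Your treatment of scaling, shifting, sums (with the observation that componentwise application of $f$ commutes with selecting and repeating coordinates), and the near-optimal-witness argument for projections are correct.

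Two points need repair, though neither changes the approach. First, the lemma assumes only a countable domain, not an oligomorphic automorphism group, so your uniform appeal to ``$R'$ takes only finitely many values'' is not licensed by the hypotheses. It is also unnecessary: measurability of $f \mapsto R'(f(t^1,\dots,t^\ell))$ is automatic, since this map factors through evaluation of $f$ at finitely many arguments, hence through the countable discrete set $C^k$, so preimages of Borel sets are countable unions of basic clopen sets; existence of the expectations should instead be extracted from the domination you already use (e.g.\ for projections, the random variable for $R'$ is bounded above pointwise by the one for $R$ at the completed tuples, whose expectation exists by the improvement of $R$). Second, your $\Feas$ and $\Opt$ steps are stated pointwise (``if $f$ keeps all $R$-values finite\dots'', ``maps minimizers to minimizers''), but the crux is an almost-sure argument that you do not spell out: when all arguments $t^1,\dots,t^\ell$ lie in $\Feas(R)$ (resp.\ in $\Opt(R)$), the improvement inequality for $R$ gives $E_\omega[f \mapsto R(f(t^1,\dots,t^\ell))] < \infty$ (resp.\ $\leq$ the minimum value of $R$, while the integrand is pointwise $\geq$ that minimum), which forces $R(f(t^1,\dots,t^\ell))$ to be finite (resp.\ minimal) for $\omega$-almost every $f$; only then does the expectation of the crisp relation $\Feas(R)$ (resp.\ $\Opt(R)$) vanish, as required. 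With these two adjustments the proof is complete and coincides with the standard one.
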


Let $\bA$ be a~relational structure and $G$ a permutation group on the domain $A$ of $\bA$. Let $\ell \geq 2$ and $f \colon A^\ell \to A$. The operation $f$ is \emph{pseudo cyclic with respect to $G$} if there exist $e_1, \dots, e_\ell \in \overline G$ such that for all $x_1, \dots, x_\ell \in A$, 
\[e_1f(x_1, x_2, \dots, x_\ell) = e_2 f(x_2, \dots, x_\ell, x_1) = \dots = e_\ell f(x_\ell, x_1, \dots, x_{\ell-1}).\] The operation $f$ is \emph{canonical with respect to $G$} if for all $k \in \N$ and $t^1, \dots, t^\ell \in A^k$, the orbit of the $k$-tuple $f(t^1, \dots, t^\ell)$ with respect to $G$ only depends on the orbits of $t^1, \dots, t^\ell$ with respect to $G$. A~fractional operation $\omega$ on $C$ of arity $\ell$ is called \emph{pseudo cyclic with respect to $G$} if $\omega(S)=1$ for the set $S$ of all pseudo cyclic operations with respect to $G$ of arity $\ell$. \emph{Canonicity} for fractional operations is defined analogously.
The following theorem is a special case of \cite[Theorem 7.13]{Resilience-VCSPs}.

\begin{theorem} \label{thm:tract}
Let $\mu$ be a union of connected conjunctive queries and let $\bA$ be a finitely bounded and homogenous expansion of $\bC_\mu$ (it exists by Theorem~\ref{thm:mcc-dual}). If $\Delta_\mu$ has a canonical pseudo cyclic fractional polymorphism with respect to $\Aut(\bA)$, then $\VCSP(\Delta_\mu)$ and the resilience problem for $\mu$ is in P.
\end{theorem}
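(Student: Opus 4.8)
The plan is to obtain the statement as a special case of the general VCSP tractability theorem \cite[Theorem 7.13]{Resilience-VCSPs}. The first step I would take is to reduce the assertion about the resilience problem to one about $\VCSP(\Delta_\mu)$: by Theorem~\ref{thm:mcc-dual}(2) the structure $\bC_\mu$ is a dual of $\mu$, so Proposition~\ref{prop:connection} yields a polynomial-time equivalence between the resilience problem for $\mu$ and $\VCSP((\bC_\mu)_0^1) = \VCSP(\Delta_\mu)$. Hence it suffices to show that $\VCSP(\Delta_\mu)$ is in P.

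Next I would check that $\Delta_\mu$ and $\bA$ meet the hypotheses of \cite[Theorem 7.13]{Resilience-VCSPs}. By Theorem~\ref{thm:mcc-dual}(1) and~(4), $\bA$ is finitely bounded and homogeneous and $\Aut(\bA)$ is oligomorphic. A permutation of the common domain preserves a crisp relation together with its complement exactly when it preserves the associated $\{0,1\}$-valued relation, so $\Aut((\bC_\mu)_0^1) = \Aut(\bC_\mu)$; and since $\bA$ is an expansion of $\bC_\mu$, we get $\Aut(\bA) \subseteq \Aut(\bC_\mu) = \Aut(\Delta_\mu)$. In particular $\Aut(\Delta_\mu)$ is oligomorphic, and the signature $\tau$ of $\Delta_\mu$ is finite. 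Together with the hypothesis that $\Delta_\mu$ admits a canonical pseudo cyclic fractional polymorphism with respect to $\Aut(\bA)$, this is precisely the setting of \cite[Theorem 7.13]{Resilience-VCSPs}, and that theorem gives that $\VCSP(\Delta_\mu)$ is in P, whence also the resilience problem for $\mu$ is in P.

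For the reader's benefit I would also recall the mechanism behind \cite[Theorem 7.13]{Resilience-VCSPs}, where the real content lies. Canonicity of the operations in the support of the fractional polymorphism, together with oligomorphicity of $\Aut(\bA)$ (which makes the number of orbits of $k$-tuples finite for every $k$), lets one pass from the infinite-domain $\Delta_\mu$ to a finite-domain valued structure $\bM$ obtained by identifying tuples that lie in a common orbit of $\Aut(\bA)$; since $\Aut(\bA) \subseteq \Aut(\Delta_\mu)$, the relations of $\Delta_\mu$ are constant on orbits, so $\bM$ is well defined. Finite boundedness and homogeneity of $\bA$ are used to show that $\VCSP(\Delta_\mu)$ reduces in polynomial time to $\VCSP(\bM)$ --- every instance has a solution realized over finitely many orbits, and the orbit data faithfully encodes the infima defining the costs over $\Delta_\mu$. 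Under this identification the witnesses $e_1, \dots, e_\ell \in \overline{\Aut(\bA)}$ of pseudo-cyclicity act as the identity on orbits, so the canonical pseudo cyclic fractional polymorphism of $\Delta_\mu$ descends to an honest cyclic fractional polymorphism of $\bM$; by the complexity dichotomy for finite-domain VCSPs (see, e.g.,~\cite{ThapperZivny13}), a finite-domain valued structure with a cyclic fractional polymorphism has a polynomial-time solvable VCSP (for instance, via a suitable linear programming relaxation). Hence $\VCSP(\bM)$, and with it $\VCSP(\Delta_\mu)$, is in P.

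I expect the hard part to be the reduction of $\VCSP(\Delta_\mu)$ to the finite problem $\VCSP(\bM)$: one has to argue that minimizing a $\tau$-expression over the infinite domain of $\Delta_\mu$ may be replaced by minimizing over the finitely many orbits of $\Aut(\bA)$, which requires both that the relevant instances always have solutions and that the canonical fractional polymorphism transfers correctly to the quotient. Everything after that point is a black-box invocation of the finite-domain VCSP dichotomy.
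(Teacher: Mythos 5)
Your proposal is correct and matches the paper's treatment: the paper simply presents this statement as a special case of \cite[Theorem 7.13]{Resilience-VCSPs}, which is exactly your black-box invocation, together with the routine transfer between the resilience problem and $\VCSP(\Delta_\mu)$ via Proposition~\ref{prop:connection} and Theorem~\ref{thm:mcc-dual}. Your additional sketch of the mechanism behind that theorem (canonical reduction to a finite-domain VCSP and the finite-domain tractability criterion) goes beyond what the paper spells out but is consistent with it.
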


We formulate an adaptation of \cite[Conjecture 8.17]{Resilience-VCSPs} for the valued structure $\Delta_\mu$, which replaces the structure $\Gamma_\mu$ used in~\cite{Resilience-VCSPs} (and without considering so-called \emph{exogenous relations}, which we do not introduce in this paper).

\begin{conjecture}\label{conj:res}
Let $\mu$ be a union of connected conjunctive queries over the signature $\tau$ and let $\bA$ be a finitely bounded homogeneous expansion of $\bC_\mu$. Then exactly one of the following holds:
\begin{itemize}
\item $(\{0,1\};\OIT)$
has a pp-construction in $\Delta_\mu$, and $\VCSP(\Delta_\mu)$ is NP-complete.
\item $\Delta_\mu$
has a fractional polymorphism of arity $\ell \geq 2$ which is canonical and pseudo cyclic with respect to $\Aut(\bA)$, and $\VCSP(\Delta_\mu)$ is in P.
\end{itemize}
\end{conjecture}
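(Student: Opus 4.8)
\medskip\noindent\emph{Proof strategy.}
The two alternatives are mutually exclusive by Corollary~\ref{cor:disjoint}, which requires no complexity-theoretic assumption; it remains to show that at least one of them always applies and that each is matched by the stated complexity. The complexity claims are the easy half. If $(\{0,1\};\OIT)$ has a pp-construction in $\Delta_\mu$, then $\VCSP(\Delta_\mu)$ is NP-hard by Lemma~\ref{lem:hard} (which applies since $\Delta_\mu$ has a finite signature and, by Theorem~\ref{thm:mcc-dual}(4), an oligomorphic automorphism group), and it lies in NP because, by Proposition~\ref{prop:connection} together with Remark~\ref{rem:fhom-eq}, it is polynomial-time equivalent to the resilience problem for $\mu$, which is in NP. If $\Delta_\mu$ has a canonical pseudo cyclic fractional polymorphism of arity $\ell \ge 2$ with respect to $\Aut(\bA)$, then $\VCSP(\Delta_\mu)$ is in P by Theorem~\ref{thm:tract}. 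Thus the whole statement reduces to the algebraic dichotomy: assuming $(\{0,1\};\OIT)$ has no pp-construction in $\Delta_\mu$, produce a canonical pseudo cyclic fractional polymorphism of $\Delta_\mu$ with respect to $\Aut(\bA)$.

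\medskip\noindent\emph{Plan for the algebraic dichotomy.}
The plan is to transfer the known dichotomy for \emph{finite-domain} valued CSPs to the $\omega$-categorical structure $\Delta_\mu$ via canonical functions over a Ramsey expansion. Starting from the finitely bounded homogeneous structure $\bB$ of which $\bC_\mu$ is a reduct (Theorem~\ref{thm:mcc-dual}(1)), one first passes to a finite-signature homogeneous Ramsey expansion, and takes $\bA$ to be (a suitable reduct of) it. Restricting to fractional operations of $\Delta_\mu$ that are canonical with respect to $\Aut(\bA)$, one collapses $\Delta_\mu$ to a finite-domain valued structure $\Gamma^{\mathrm{fin}}$ whose universe is the finite set of $\Aut(\bA)$-orbits of tuples of a fixed length: the valued relations of $\Delta_\mu$ descend to $\Gamma^{\mathrm{fin}}$ since they are $\Aut(\bA)$-invariant and attain finitely many values, and canonical fractional polymorphisms of $\Delta_\mu$ descend to fractional polymorphisms of $\Gamma^{\mathrm{fin}}$. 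One then verifies that pp-constructions push forward along this collapse, so the hypothesis implies that $(\{0,1\};\OIT)$ does not pp-construct in $\Gamma^{\mathrm{fin}}$; by the finite-domain VCSP dichotomy, $\Gamma^{\mathrm{fin}}$ therefore has a cyclic fractional polymorphism of some arity $\ell \ge 2$. Finally one lifts it back: a canonisation argument over the Ramsey structure turns the lift into a genuinely canonical fractional polymorphism of $\Delta_\mu$, and the cyclic identities valid in $\Gamma^{\mathrm{fin}}$ become identities valid modulo unary operations from $\overline{\Aut(\bA)}$, i.e.\ pseudo cyclicity with respect to $\Aut(\bA)$. A bookkeeping check that every fractional map produced remains a probability distribution on the full Borel $\sigma$-algebra of the relevant function space — the generality in which Theorem~\ref{thm:tract} is phrased — finishes the proof.

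\medskip\noindent\emph{The main obstacle.}
The genuinely hard step, and the one that is inaccessible in full generality with present techniques, is precisely ``no pp-construction of $(\{0,1\};\OIT)$ in $\Delta_\mu$'' $\Rightarrow$ ``a canonical pseudo cyclic fractional polymorphism of $\Delta_\mu$ exists''. Two ingredients of the plan above are themselves open for arbitrary finite relational signatures: that every finitely bounded homogeneous structure has a finite-signature homogeneous Ramsey expansion, and that for the model-complete core of a reduct of such a structure the absence of a pp-construction of a hard (V)CSP already forces a canonical pseudo cyclic (fractional) polymorphism — the valued form of the infinite-domain tractability conjecture. In the digraph case underlying Theorem~\ref{thm:main-ucq} these obstructions are circumvented because the duals $\bC_\mu$ are assembled from the finitely many minimal connected digraph queries, which constrains their structure enough to carry out the analysis directly, exhibiting either the pp-construction of $\OIT$ or the canonical fractional polymorphism by hand; no comparable finiteness is available for general $\tau$. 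Hence the honest reading of this statement in full generality is as a theorem conditional on the infinite-domain CSP dichotomy conjecture (and the Ramsey-expansion conjecture), with the forward transfer of pp-constructions, the canonisation over a Ramsey expansion, and the finite-domain valued dichotomy supplying the rest of the argument.
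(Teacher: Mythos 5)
The statement you were asked about is stated in the paper as a \emph{conjecture}, and the paper does not prove it in general; it only (a) establishes that the two alternatives are mutually exclusive (Corollary~\ref{cor:disjoint}, via the fact that $\Delta_\mu^*$ is a model-complete core), (b) records, right after Corollary~\ref{cor:disjoint}, exactly the reduction you give — that by Lemma~\ref{lem:hard}, Theorem~\ref{thm:tract} and the disjointness, the whole conjecture boils down to showing that absence of a pp-construction of $(\{0,1\};\OIT)$ in $\Delta_\mu$ forces a canonical pseudo cyclic fractional polymorphism — and (c) verifies the conjecture only in the digraph case $\tau=\{R\}$ (Theorem~\ref{thm:main-ucq}), by explicitly exhibiting, query by query, either the canonical pseudo cyclic fractional polymorphism (Lemma~\ref{lem:tract}) or a pp-construction of $\OIT$ (Corollary~\ref{cor:cycle-hard-ucq}, Lemma~\ref{lem:long-path}). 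Your handling of the easy implications is correct and coincides with the paper's: NP-hardness from Lemma~\ref{lem:hard} (oligomorphicity from Theorem~\ref{thm:mcc-dual}), NP-membership via Proposition~\ref{prop:connection} since $\bC_\mu$ is itself a dual, tractability from Theorem~\ref{thm:tract}, disjointness from Corollary~\ref{cor:disjoint} (here one should also note that a canonical pseudo cyclic fractional polymorphism with respect to $\Aut(\bA)$ is in particular pseudo cyclic with respect to the larger group $\Aut(\Delta_\mu)$, and that pp-constructing $\OIT$ yields a pp-construction of $K_3$, which is what the corollary excludes — the paper glosses over this in the same way). Your proposed route for the remaining core — canonisation over a Ramsey expansion, collapse to a finite-domain valued structure on orbits, the finite-domain VCSP dichotomy, and a lift back — is a reasonable programme, but as you yourself say it rests on the finite-signature Ramsey-expansion conjecture and on a valued analogue of the infinite-domain tractability conjecture, both open; so it is not a proof, and you are right to present the general statement as conditional. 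In short: no genuine gap relative to the paper, because the paper offers no proof of the conjecture either; your reduction to the algebraic core is exactly the paper's, and your honest assessment of what remains open is accurate.
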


The main reason to use $\Delta_\mu$ instead of $\Gamma_\mu :=(\bB_\mu)_0^1$ in this conjecture is Corollary~\ref{cor:disjoint}, which shows that for $\Delta_\mu$ the converse of the implication in Conjecture~\ref{conj:res} is true: if $\Delta_\mu$ has a canonical and pseudo cyclic fractional polymorphism, then it does not pp-construct $(\{0,1\}; \OIT)$; see also the discussion in Section~\ref{sect:disjoint}. The relationship between the two conjectures will be a subject of further investigation; at the moment we cannot prove that if $\Delta_\mu$ has a canonical and pseudo cyclic fractional polymorphism, then so does $\Gamma_\mu$, or vice versa.

\section{Disjointness of the two cases of Conjecture~\ref{conj:res}}
\label{sect:disjoint}
In this section we prove that the two cases in the complexity dichotomy of Conjecture~\ref{conj:res} are disjoint.
For a valued structure $\Gamma$, we denote by $\Gamma^*$ the relational structure on the same domain whose relations are all relations from $\langle \Gamma \rangle$ that attain only values $0$ and $\infty$. Observe that by Lemma~\ref{lem:easy-Imp-fPol}, $\Aut(\Gamma) \subseteq \Aut(\Gamma^*)$.

\begin{observation}\label{obs:mc-core}
 Let $\mu$ be a union of conjunctive queries. Then $\Delta_{\mu}^*$ is a model-complete core.
 \end{observation}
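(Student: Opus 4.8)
The plan is to reduce everything to the fact, built into its definition, that $\bC_\mu$ is a model-complete core: I will show that $\Delta_\mu^*$ has the same automorphism group and the same endomorphism monoid as $\bC_\mu$, and then the model-complete-core property transfers directly. Throughout, recall $\Delta_\mu = (\bC_\mu)_0^1$ and that $\bC_\mu$ is a model-complete core with oligomorphic automorphism group by (the definition of the model-complete core together with) Theorem~\ref{thm:mcc-dual}.

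First I would establish $\Aut(\Delta_\mu^*) = \Aut(\bC_\mu)$. Since for each $R \in \tau$ the valued relation $R^{\Delta_\mu}$ is completely determined by $R^{\bC_\mu}$ (value $0$ on $R^{\bC_\mu}$, value $1$ elsewhere), a permutation preserves $R^{\Delta_\mu}$ iff it preserves $R^{\bC_\mu}$; hence $\Aut(\Delta_\mu) = \Aut(\bC_\mu)$. The inclusion $\Aut(\Delta_\mu) \subseteq \Aut(\Delta_\mu^*)$ is already recorded in the excerpt (it follows from Lemma~\ref{lem:easy-Imp-fPol}). For the reverse inclusion I would observe that each crisp relation $R^{\bC_\mu}$ itself lies in $\langle \Delta_\mu \rangle$: if $R^{\bC_\mu} \neq \emptyset$ then $R^{\bC_\mu} = \Opt(R^{\Delta_\mu})$, because $R^{\Delta_\mu}$ is everywhere finite and attains its minimum value $0$ exactly on $R^{\bC_\mu}$; and the empty $k$-ary relation is always in the clone (realize it by $\bot(x_1)$ viewed as an expression in the variables $x_1,\dots,x_k$). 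Thus each $R^{\bC_\mu}$ is among the relations of $\Delta_\mu^*$, so $\Aut(\Delta_\mu^*) \subseteq \Aut(\bC_\mu)$.

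Next I would prove $\End(\Delta_\mu^*) = \End(\bC_\mu)$. The inclusion $\subseteq$ is immediate from the same observation that every $R^{\bC_\mu}$ is a relation of $\Delta_\mu^*$. For $\supseteq$, use that $\bC_\mu$ is a model-complete core, so $\End(\bC_\mu) = \overline{\Aut(\bC_\mu)} = \overline{\Aut(\Delta_\mu)}$; it then suffices to show that every operation in $\overline{\Aut(\Delta_\mu)}$ preserves every crisp $S \in \langle \Delta_\mu \rangle$. By Lemma~\ref{lem:easy-Imp-fPol}, each automorphism of $\Delta_\mu$, viewed as a point-mass fractional polymorphism of arity $1$, improves $S$, which for a crisp $S$ just means it preserves $S$; and for a fixed crisp relation $S$ the set of $f \in C^C$ preserving $S$ is closed in the topology of pointwise convergence, being $\bigcap_{t \in S}\{f : f(t) \in S\}$, an intersection of clopen cylinder conditions. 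Hence this property passes from $\Aut(\Delta_\mu)$ to its closure $\overline{\Aut(\Delta_\mu)}$. Combining the two steps gives $\overline{\Aut(\Delta_\mu^*)} = \overline{\Aut(\bC_\mu)} = \End(\bC_\mu) = \End(\Delta_\mu^*)$, and $\Aut(\Delta_\mu^*) = \Aut(\bC_\mu)$ is oligomorphic, so $\Delta_\mu^*$ is a model-complete core.

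I expect the only genuinely non-routine point to be the $\supseteq$ direction of the endomorphism equality — namely the topological argument that preservation of each pp-expressible crisp relation is a closed condition and therefore survives passage to $\overline{\Aut(\Delta_\mu)}$. The remaining steps are essentially bookkeeping with the identities $\Aut(\Delta_\mu) = \Aut(\bC_\mu)$ and $R^{\bC_\mu} = \Opt(R^{\Delta_\mu})$; a minor point to handle with care is the degenerate case $R^{\bC_\mu} = \emptyset$, dealt with separately via $\bot$.
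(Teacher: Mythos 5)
Your proof is correct and takes essentially the same route as the paper's: both arguments sandwich $\End(\Delta_\mu^*)$ between $\End(\bC_\mu)$ and $\overline{\Aut(\Delta_\mu)}$ using that $R^{\bC_\mu}=\Opt(R^{\Delta_\mu})$ is among the relations of $\Delta_\mu^*$, that $\bC_\mu$ is a model-complete core, and Lemma~\ref{lem:easy-Imp-fPol} together with a pointwise-convergence closure argument. Your additional care (the empty-relation case via $\bot$, and spelling out that preservation of a crisp relation is a closed condition) merely makes explicit details the paper leaves implicit.
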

\begin{proof}
    Note that for every $R \in \tau$, the structure $\Delta_\mu^*$ contains $R^{\bC_\mu}=\Opt(R^{\Delta_\mu})$. In particular, $\End(\Delta_\mu^*) \subseteq \End(\bC_\mu)$ by Lemma~\ref{lem:easy-Imp-fPol}.
\begin{align*}
\End(\Delta_\mu^*) \subseteq \End(\bC_\mu) & = \overline{\Aut(\bC_\mu)} && \text{($\bC_\mu$ is a model-compl. core}) \\
& = \overline{\Aut(\Delta_\mu)} \subseteq \overline{\Aut(\Delta_\mu^*)} \subseteq \End(\Delta_\mu^*).
\end{align*}
Therefore, $\Delta_\mu^*$ is a model-complete core.
\end{proof}

Let $G$ be a permutation group on a set $C$. An operation $f \colon C^\ell \to C$ on a set $C$ is called \emph{pseudo Taylor with respect to $G$} if for every $i \in \{1, \dots, \ell\}$ there exist $e_1, e_2 \in \overline G$ and variables $z_1, \dots, z_\ell, z_1', \dots, z_\ell' \in \{x,y\}$ such that $z_i \neq z_i'$ and for all $x,y \in C$,
$e_1 (f(z_1, \dots, z_n)) = e_2 (f(z_1', \dots, z_n')).$
A fractional operation $\omega$ on $C$ of arity $\ell$ is called \emph{pseudo Taylor with respect to $G$} if $\omega(T)=1$ for the set $T$ of all pseudo Taylor operations with respect to $G$ on $C$ of arity $\ell$.
Note that every pseudo cyclic operation with respect to $G$ is pseudo Taylor with respect to $G$; similarly, pseudo Taylor fractional operations generalize pseudo cyclic fractional operations.
The following result is not specific to resilience problems, but holds for VCSPs of valued structures with an oligomorphic automorphism group in general.

\begin{theorem}\label{thm:disjoint}
    Let $\Gamma$ be a valued structure with an oligomorphic automorphism group such that
    $\Gamma^*$ is a model-complete core and such that $\Gamma$ has a pseudo cyclic 
    (or, more generally, a pseudo Taylor)
    fractional polymorphism $\omega$ with respect to $\Aut(\Gamma)$. 
    Then 
    $\Gamma$ does not pp-construct $K_3$.
\end{theorem}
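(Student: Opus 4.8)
The plan is to argue by contradiction using the theory of fractional polymorphisms and the known classification of finite-domain CSPs (Bulatov--Zhuk). Suppose $\Gamma$ pp-constructs $K_3$. Since $\Gamma$ has an oligomorphic automorphism group and $K_3$ is a finite structure, the strategy is to ``push down'' the pseudo cyclic (or pseudo Taylor) fractional polymorphism $\omega$ of $\Gamma$ to a fractional polymorphism of a finite-domain valued structure that pp-constructs $K_3$, contradicting the fact that $\CSP(K_3)$ is NP-hard and therefore admits no (pseudo) Taylor polymorphism. Concretely, first I would unwind the definition of pp-construction: $K_3$ is fractionally homomorphically equivalent to a pp-power $\Delta'$ of $\Gamma$; since $K_3$ is crisp, $\Delta'$ must itself be essentially crisp in the relevant part, and its feasibility relations lie in $\langle \Gamma \rangle$. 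The key point is that a fractional polymorphism transports along pp-powers and along fractional homomorphic equivalence (Remark~\ref{rem:frac-pol-frac-hom} and Lemma~\ref{lem:easy-Imp-fPol} give the first; a routine composition argument gives the second), so $K_3$ would inherit a fractional polymorphism $\omega'$ of arity $\ell$ that is pseudo cyclic (resp.\ pseudo Taylor) with respect to some permutation group on the finite set $\{0,1,2\}$.

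The next step is to extract from $\omega'$ an ordinary polymorphism of $K_3$. Since $\omega'$ is supported on pseudo Taylor operations and its expected value improves every relation of $K_3$, and the only values appearing are $0$ and $\infty$ (as $K_3$ is crisp), the support of $\omega'$ consists of operations that preserve the edge relation of $K_3$ up to the twisting endomorphisms $e_1, e_2 \in \overline{G}$. Here I would use the hypothesis that $\Gamma^*$ is a model-complete core: this forces $\overline{\Aut(\Gamma)} = \End(\Gamma^*)$, and transporting this through the pp-construction means the twisting maps $e_i$ acting on the finite domain $\{0,1,2\}$ are in fact automorphisms of $K_3$ (the model-complete core of a finite structure has the property that its self-embeddings are automorphisms). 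Thus the twists $e_i$ are genuine symmetries, and from a single operation in the support of $\omega'$ we obtain an honest pseudo Taylor polymorphism of $K_3$ with respect to $\Aut(K_3)$. But $\Aut(K_3) = S_3$ has no non-trivial normal structure obstruction — more precisely, a finite structure whose core is itself and which has a pseudo Taylor polymorphism w.r.t.\ its automorphism group has a Taylor polymorphism, and $K_3$, being a finite core whose CSP is NP-complete, has no Taylor polymorphism by Bulatov--Zhuk. This contradiction finishes the proof.

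The main obstacle I anticipate is the bookkeeping in the second step: correctly passing the pseudo cyclic/pseudo Taylor property through the pp-power (where the arity of tuples changes by a factor of $d$ and the twisting operations act coordinate-wise) and through the fractional homomorphic equivalence, while keeping track of which permutation group the twists live in. In particular one must verify that the twisting endomorphisms, which a priori lie in $\overline{\Aut(\Gamma)}$ acting on $C^d$, descend to maps on the three-element domain that are compatible with the pp-power structure — this is exactly where the model-complete core hypothesis on $\Gamma^*$ is essential, since it guarantees these maps are surjective and hence, on a finite core, are automorphisms. The rest (NP-hardness of $\CSP(K_3)$, absence of Taylor polymorphisms for finite non-tractable cores) can be cited as a black box from the finite-domain CSP dichotomy.
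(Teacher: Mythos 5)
There is a genuine gap, and it sits exactly where you locate the ``main obstacle'' and then dismiss it as bookkeeping. Transporting the \emph{pseudo} cyclic/Taylor property down to $K_3$ through the fractional homomorphic equivalence is not a routine composition argument. If $h$ ranges over the support of the fractional homomorphism from the pp-power to $K_3$ and $g$ over the support of the one in the other direction, the composite fractional operation on $\{0,1,2\}$ is built from maps of the form $h\circ f\circ(g,\dots,g)$; applying the pseudo identity $e_1 f(x_1,\dots,x_\ell)=e_2 f(x_2,\dots,x_\ell,x_1)=\dots$ leaves the twists $e_i\in\overline{\Aut(\Gamma)}$ sandwiched as $h\circ e_i\circ f\circ(g,\dots,g)$, and there is in general no map $\alpha_i$ on $\{0,1,2\}$ with $h\circ e_i=\alpha_i\circ h$, so no pseudo identity over $\Aut(K_3)$ is obtained. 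Absorbing such twists is precisely the non-trivial content of the infinite-domain algebraic theory (it needs oligomorphicity, the model-complete core property, and compactness/topological arguments), so your plan would in effect have to reprove a special case of the theorem of~\cite{BKOPP-equations} rather than cite only the finite-domain dichotomy. Relatedly, your use of the model-complete core hypothesis (to argue that the twists ``descend to automorphisms of $K_3$'') does not match what that hypothesis can deliver: it is a statement about $\End(\Gamma^*)$ on the infinite domain $C$, not about any induced maps on $\{0,1,2\}$.

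The paper takes a different and shorter route that avoids descending to $K_3$ altogether: from $\Gamma$ pp-constructing $K_3$ it deduces that the crisp structure $\Gamma^*$ pp-constructs $K_3$; then, using Lemma~\ref{lem:easy-Imp-fPol} and a support argument (the support of a fractional polymorphism of a crisp structure concentrates on genuine polymorphisms), it extracts from $\omega$ an honest pseudo cyclic (or pseudo Taylor) \emph{polymorphism} of $\Gamma^*$; finally it invokes the result of~\cite{BKOPP-equations} that an oligomorphic model-complete core which pp-constructs $K_3$ admits no pseudo Taylor polymorphism with respect to its automorphism group --- this is where the hypothesis on $\Gamma^*$ is actually used. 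If you want to salvage your approach, you would either have to cite that infinite-domain theorem anyway (at which point the detour through $K_3$'s finite dichotomy buys nothing), or supply the missing twist-absorption argument, which is the hard part of the proof, not bookkeeping.
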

\begin{proof}
Suppose for contradiction that $\Gamma$ pp-constructs $K_3$.
By Proposition 2.22 in~\cite{ThesisZaneta}, $\Gamma^*$ pp-constructs $K_3$ as well.
By results in~\cite{BKOPP-equations} (see, e.g., Theorem 10.3.5 in~\cite{Book}), $\Gamma^*$ cannot have a pseudo Taylor
polymorphism with respect to $\Aut(\Gamma)$, and in particular, it cannot have a pseudo cyclic polymorphism with respect to $\Aut(\Gamma)$.

By the definition of a pseudo cyclic fractional operation, 
there is a set $S$ of pseudo cyclic operations of arity $\ell$ on $C$ such that $\omega(S) = 1$.
By Lemma~\ref{lem:easy-Imp-fPol}, $\omega$ is also a fractional polymorphism of $\Gamma^*$.
By Proposition 3.22 in~\cite{ThesisZaneta}, $\omega(S \cap \Pol^{(\ell)}(\Gamma^*)) = 1$. In particular, $S \cap \Pol^{(\ell)}(\Gamma^*)$ is non-empty. This is in contradiction to $\Pol(\Gamma^*)$ not containing any pseudo cyclic operations.
The proof in the case that $\omega$ is just a pseudo Taylor operation is analogous.
\end{proof}

\begin{corollary} \label{cor:disjoint}
Let $\mu$ be a union of conjunctive queries such that $\Delta_\mu$ has a pseudo cyclic, or, more generally, a pseudo Taylor fractional polymorphism $\omega$ with respect to $\Aut(\Delta_\mu)$. 
Then $\Delta_\mu$ does not pp-construct $K_3$.
\end{corollary}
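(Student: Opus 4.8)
The plan is to derive Corollary~\ref{cor:disjoint} as a direct instance of Theorem~\ref{thm:disjoint}, applied to $\Gamma := \Delta_\mu$. To invoke that theorem I need to verify its two hypotheses: that $\Delta_\mu$ has an oligomorphic automorphism group, and that $\Delta_\mu^*$ is a model-complete core. The first is immediate: $\Delta_\mu = (\bC_\mu)_0^1$ lives on the same domain as $\bC_\mu$, has the same automorphisms, and $\Aut(\bC_\mu)$ is oligomorphic by Theorem~\ref{thm:mcc-dual}(4). The second hypothesis is exactly the content of Observation~\ref{obs:mc-core}, which states that $\Delta_\mu^*$ is a model-complete core. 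The hypothesis on the fractional polymorphism of Theorem~\ref{thm:disjoint} is precisely the hypothesis given in the corollary, so it transfers verbatim.

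First I would note that by Theorem~\ref{thm:disjoint}, under these hypotheses $\Delta_\mu$ does not pp-construct $K_3$, which is exactly the conclusion of the corollary. So the proof is essentially one line: ``This follows from Theorem~\ref{thm:disjoint} applied to $\Gamma = \Delta_\mu$, using Observation~\ref{obs:mc-core} (to see that $\Delta_\mu^*$ is a model-complete core) and Theorem~\ref{thm:mcc-dual}(4) (to see that $\Aut(\Delta_\mu)$ is oligomorphic).'' One small subtlety worth flagging is the statement of Observation~\ref{obs:mc-core}: it is phrased for ``a union of conjunctive queries'' rather than ``a union of \emph{connected} conjunctive queries'' as used elsewhere (e.g.\ in the definition of $\Delta_\mu$ via $\bC_\mu$); I would make sure the corollary's hypothesis matches, or silently assume connectedness as everywhere else in the paper, so that $\bC_\mu$ and hence $\Delta_\mu$ is well-defined.

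There is essentially no obstacle here — the corollary is a packaging of the theorem for the specific valued structures arising from resilience problems, and all the work is done in Theorem~\ref{thm:disjoint} and Observation~\ref{obs:mc-core}. If I wanted to state a stronger conclusion matching Conjecture~\ref{conj:res}, namely ``$(\{0,1\};\OIT)$ has no pp-construction in $\Delta_\mu$'', I would additionally recall that $(\{0,1\};\OIT)$ pp-constructs $K_3$ (1-in-3-3-SAT pp-constructs 3-colouring), so if $\Delta_\mu$ pp-constructed $(\{0,1\};\OIT)$ then by transitivity of pp-constructions it would pp-construct $K_3$, contradicting the corollary. But as stated the corollary only claims the $K_3$ version, so the one-line deduction above suffices.

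\begin{proof}
Apply Theorem~\ref{thm:disjoint} with $\Gamma := \Delta_\mu$. Its first hypothesis, that $\Aut(\Delta_\mu)$ is oligomorphic, holds because $\Delta_\mu = (\bC_\mu)_0^1$ has the same domain and the same automorphism group as $\bC_\mu$, and $\Aut(\bC_\mu)$ is oligomorphic by Theorem~\ref{thm:mcc-dual}(4). Its second hypothesis, that $\Delta_\mu^*$ is a model-complete core, is Observation~\ref{obs:mc-core}. The remaining hypothesis, that $\Gamma$ has a pseudo cyclic (more generally, pseudo Taylor) fractional polymorphism $\omega$ with respect to $\Aut(\Gamma)$, is precisely the assumption of the corollary. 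Hence Theorem~\ref{thm:disjoint} yields that $\Delta_\mu$ does not pp-construct $K_3$.
\end{proof}
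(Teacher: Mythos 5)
Your proof is correct and matches the paper's own argument: the paper likewise applies Theorem~\ref{thm:disjoint} to $\Gamma = \Delta_\mu$, citing Observation~\ref{obs:mc-core} for the model-complete-core hypothesis (with the oligomorphicity of $\Aut(\Delta_\mu)$ left implicit, which you spell out via Theorem~\ref{thm:mcc-dual}(4)). Your remark about the connectedness assumption needed for $\bC_\mu$, hence $\Delta_\mu$, to be defined is a fair observation but does not affect the argument.
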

\begin{proof} 
    By Observation~\ref{obs:mc-core}, the structure 
    $\Delta_\mu^*$ is a model-complete core. 
    Now the statement follows from Theorem~\ref{thm:disjoint}. 
\end{proof} 

Observe that to prove Conjecture~\ref{conj:res} it suffices to show that whenever $\Delta_\mu$ does not pp-construct $(\{0,1\}; \OIT)$, it has a canonical pseudo cyclic fractional polymorphism:
this follows from Corollary~\ref{cor:disjoint} (the two cases are known to be disjoint), Theorem~\ref{thm:tract} (the tractability result for canonical pseudo cyclic fractional polymorphisms) and Lemma~\ref{lem:hard} (the hardness condition based on pp-constructions).

The main reason to work with the dual $\bC_\mu$ in this paper, instead of the dual $\bB_\mu$ that was used in~\cite{Resilience-VCSPs}, comes from the proof of Theorem~\ref{thm:disjoint} above: we need the property that $\bC_\mu$ is a model-complete core to get that $\Delta_\mu^*$ is a model-complete core and hence to be able to apply the results from~\cite{BKOPP-equations}.

\section{Complexity Dichotomy for Digraph Resilience Problems}\label{sect:main}
From now on, $R$ denotes a binary relational symbol. We will often view $\{R\}$-structures as directed graphs.
Let
\begin{align*}
    \mu_\ell &:= \exists x \; R(x,x), \\
    \mu_e &:= \exists x,y \; R(x,y), \text{ and}\\
    \mu_{c} &:= \exists x,y \; \big (R(x,y) \wedge R(y,x) \big ).
\end{align*}

The main result of the present article is the following theorem, which is a stronger version of Theorem~\ref{thm:main-informal} presented in Section~\ref{sec:intro}.

\begin{theorem}\label{thm:main-ucq}
Let $\mu$ be a union of conjunctive queries  over the signature $\{R\}$. Then the resilience problem of $\mu$ is in P or NP-complete.
If all conjunctive queries in $\mu$ are minimal, connected, and pairwise non-equivalent, then 
exactly
one of the following holds:
\begin{enumerate}
    \item $\mu$ is equal to $\mu_\ell$, $\mu_e$, or $\mu_c$, and the resilience of $\mu$ is in P. In this case, $\Delta_\mu$ has a fractional polymorphism, which is canonical and pseudo cyclic with respect to $\Aut(\bC_\mu)$.
    \item $\Delta_\mu$ pp-constructs $(\{0,1\}; \OIT)$ and the resilience problem of $\mu$ is NP-complete.
\end{enumerate}
\end{theorem}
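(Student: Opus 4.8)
The proof has two layers: first reduce the classification to the case where all queries are minimal, connected, and pairwise non-equivalent (using Lemma~\ref{lem:con} and the Chandra--Merlin theorem, plus the trivial observation that an empty union is always in P), and then, within that normalized setting, carry out a combinatorial analysis of which digraph queries can appear. The heart of the argument is a structural trichotomy: among minimal connected conjunctive queries over $\{R\}$, the three queries $\mu_\ell$, $\mu_e$, $\mu_c$ are exactly the ones whose canonical database is a single vertex with a loop, a single edge, or a directed 2-cycle, and every other minimal connected query has a canonical database $\bD_\mu$ that is loopless, has at least one edge, and is not the single digraph edge — so it contains either a path of length $2$, or two vertices each of out-degree or in-degree $\geq 1$ in a way that forces enough structure to build $\OIT$. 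I would first dispatch case~(1): for $\mu \in \{\mu_\ell, \mu_e, \mu_c\}$ the dual $\bC_\mu$ is a very simple finite or $\omega$-categorical structure (for $\mu_e$ it is a single point with the empty relation; for $\mu_\ell$ it is the loopless digraph that is the Fraïssé limit of all finite loopless digraphs; for $\mu_c$ it is the Fraïssé limit of all finite digraphs with no directed 2-cycle, i.e. "antisymmetric" digraphs / a homogeneous tournament-like object), and in each case one exhibits an explicit binary or ternary canonical pseudo cyclic fractional polymorphism of $\Delta_\mu$ — for $\mu_c$ one can use a suitable "majority-like" or averaging fractional operation built from automorphisms, and for $\mu_\ell$ and $\mu_e$ the tractability is essentially trivial. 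Theorem~\ref{thm:tract} then gives membership in P, and Corollary~\ref{cor:disjoint} guarantees the cases do not overlap.

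**The hard direction.** The substantive work is showing that every minimal connected $\mu$ not in $\{\mu_\ell,\mu_e,\mu_c\}$ has $\Delta_\mu$ pp-constructing $(\{0,1\};\OIT)$. I would organize this around a small number of "source" queries that are already known (or shown directly) to have NP-hard resilience — a natural candidate being the directed 2-path $P_2 := \exists x,y,z\,(R(x,y)\wedge R(y,z))$ and the "cherry"/symmetric-path configurations — and then use \emph{self-join variations} (Theorem~\ref{thm:self-join-var}, cited in the introduction) together with pp-constructions to propagate hardness: given an arbitrary minimal connected query whose canonical database is loopless and has $\geq 2$ edges (and is connected, hence has a vertex incident to two edges), one argues that its dual $\bB_0^1$, and hence $\Delta_\mu$, pp-constructs the dual of one of these source queries. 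A clean way to run this is via Example~\ref{expl:max-cut-hard}: it suffices to pp-construct $\Gamma_{\textup{MC}}$ (directed max-cut), since that already pp-constructs $(\{0,1\};\OIT)$, and directed max-cut is often exactly the VCSP one gets from the dual of a "non-trivial" connected digraph query — the relation "$x=0 \wedge y=1$ costs $0$, else costs $1$" is precisely $R^{\bB_0^1}$ when $\bB$ is the two-element linear order, which is the dual of $P_2$. So the skeleton is: (a) $P_2$'s dual is the $2$-element order $\mathbb{2}$, so $\Delta_{P_2} = \Gamma_{\textup{MC}}$ up to fractional equivalence and is NP-hard; (b) any minimal connected $\mu \notin\{\mu_\ell,\mu_e,\mu_c\}$ either has $P_2$ (as a "colour-blown-up" configuration) pp-constructible from $\Delta_\mu$, or falls into finitely many exceptional small cases (a single long directed cycle, undirected-path-like queries, etc.) which must be handled individually, again by explicit pp-constructions of $\Gamma_{\textup{MC}}$ or $(\{0,1\};\OIT)$ from the corresponding Fraïssé-limit dual.

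**Where the difficulty lies.** I expect the main obstacle to be the case analysis in the hard direction, specifically two things. First, making precise the claim "every non-trivial connected digraph query forces a $P_2$-like configuration or is one of a short list of exceptions" requires genuine combinatorics on digraphs up to homomorphic equivalence and minimality — one has to rule out, e.g., that some exotic strongly-connected digraph query has a tractable dual, and exceptional structures like directed cycles $\vec{C}_n$ (whose duals are related to homomorphisms into paths) need separate treatment. Second, and more delicate, is verifying the pp-constructions at the level of \emph{valued} structures rather than crisp ones: one must check that the fractional-homomorphic-equivalence component of a pp-construction is available, which for the $\omega$-categorical duals $\bC_\mu$ means tracking orbits of tuples under $\Aut(\bC_\mu)$ and confirming that the pp-power construction over $\langle \bC_\mu \rangle_0^1$ actually realizes the $\OIT$-cost-function (or $\Gamma_{\textup{MC}}$); Lemma~\ref{lem:easy-Imp-fPol} and the transitivity of pp-constructions keep this bookkeeping manageable, but it is where the proof will be longest. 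A secondary subtlety, already flagged in the paper, is canonicity: in case~(1) the fractional polymorphism of $\Delta_{\mu_c}$ must be checked to be canonical with respect to $\Aut(\bC_{\mu_c})$, which uses that the dual is homogeneous (Theorem~\ref{thm:mcc-dual}(3), applicable since the Gaifman graph of $\mu_c$ is complete) so that "orbit of the output depends only on orbits of inputs" can be verified on the finitely many orbits of pairs.
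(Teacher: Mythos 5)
Your reduction to minimal, connected, pairwise non-equivalent queries, your treatment of case (1) (explicit canonical pseudo cyclic fractional polymorphisms for $\mu_\ell,\mu_e,\mu_c$, then Theorem~\ref{thm:tract} and Corollary~\ref{cor:disjoint} for disjointness), and your intuition that ``path-like'' queries lead to directed max-cut are all consistent with the paper: indeed, for every query in $\mu$ whose multigraph is a tree, the paper uses the Ne\v{s}et\v{r}il--Tardif finite dual, observes it is acyclic, and pp-constructs $\Gamma_{\textup{MC}}$ (hence $(\{0,1\};\OIT)$) from a longest-path gadget (Lemma~\ref{lem:long-path}); your observation that the dual of $\exists x,y,z\,(R(x,y)\wedge R(y,z))$ is the single edge and gives exactly $\Gamma_{\textup{MC}}$ is a special case of this.

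The genuine gap is in your plan for all remaining queries, i.e.\ those containing a query $\mu_0$ with a cycle of length $\geq 3$ in $\multigr(\mu_0)$. You propose to propagate hardness from the $2$-path by pp-constructing its dual from $\Delta_\mu$, treating ``a single long directed cycle, undirected-path-like queries, etc.''\ as finitely many exceptional cases. This does not work and hides the technical heart of the theorem: the cyclic queries form an infinite family (arbitrary cycles with arbitrary attached structure and arbitrary self-join patterns), their duals $\bC_\mu$ are infinite $\omega$-categorical structures, and containment of the $2$-path as a sub-pattern of a single query yields neither a reduct nor any evident pp-power or fractional-homomorphic equivalence with the two-element dual of the $2$-path --- Theorem~\ref{thm:self-join-var} goes in the opposite direction (from the self-joined query to its self-join-free version), and Lemma~\ref{lem:sjf-ucq} only allows dropping whole disjuncts of a self-join-free union, not atoms inside one query. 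The paper instead proves Theorem~\ref{thm:sjf-cycle}: for every self-join-free connected query over a binary signature whose multigraph contains a cycle of length $\geq 3$, $\Delta_\mu$ pp-constructs $(\{0,1\};\OIT)$ \emph{directly}, via the alternation gadgets $\psi_R,\psi_S,\psi_T$, the combined expression $\psi$ with its $\Opt$, and an explicit finite witness structure mapped into $\bC_\mu$ to certify the optimum values; self-joins and unions are then handled through Lemma~\ref{lem:self-join-var}, Theorem~\ref{thm:self-join-var}, and Lemma~\ref{lem:sjf-ucq} (Corollary~\ref{cor:cycle-hard-ucq}). Without an argument of this kind, your case (2) is unproven already for the triangle query $\exists x,y,z\,(R(x,y)\wedge R(y,z)\wedge R(z,x))$, let alone for the general cyclic case, so the proposal as it stands does not establish the dichotomy.
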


 We first sketch the proof strategy for Theorem~\ref{thm:main-ucq}. First observe that one may assume without loss of generality that all queries in $\mu$ are minimal, connected, and pairwise non-equivalent. If $\mu$ is equal to $\mu_\ell$, $\mu_e$, $\mu_c$, then the properties from item 1 are proven in Lemma~\ref{lem:tract}. Otherwise, we prove that either $\mu$ contains a query $\mu_0$ that contains a cycle of length $\geq 3$, or it has a finite dual without directed cycles. In both of these cases we show that item 2 holds.

It is easy to see that the resilience problem for $\mu_\ell$, $\mu_e$ or $\mu_c$ is in P. In Lemma~\ref{lem:tract} we give a stronger algebraic statement 
which corresponds to item 1 in Theorem~\ref{thm:main-ucq};
this was essentially known before, 
but we prove it
for the convenience of the reader. 

\begin{lemma} \label{lem:tract}
For every $\mu \in \{\mu_\ell, \mu_e,\mu_c\}$, the valued structure $\Delta_\mu$ has a canonical pseudo cyclic fractional polymorphism with respect to $\bC_\mu$. In particular, the resilience problem for $\mu$ is in P. 
\end{lemma}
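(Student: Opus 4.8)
The plan is to treat the three queries $\mu_\ell,\mu_e,\mu_c$ one at a time, in each case first identifying the dual $\bC_\mu$ explicitly (up to homomorphic equivalence, hence up to isomorphism of the model-complete core), and then exhibiting a concrete canonical pseudo cyclic fractional polymorphism of $\Delta_\mu=(\bC_\mu)_0^1$. Once we have such a fractional polymorphism, the ``in particular'' clause is immediate from Theorem~\ref{thm:tract}: the resilience problem for $\mu$ is in P. So the whole content of the lemma is the construction of the fractional polymorphism in each of the three cases.

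For $\mu_\ell=\exists x\,R(x,x)$: a finite structure satisfies $\neg\mu_\ell$ iff it is loopless, so a dual is the structure $\bC$ on a one-element domain $\{0\}$ with $R^{\bC}=\emptyset$; this is already a model-complete core, so $\bC_{\mu_\ell}=\bC$ and $\Delta_{\mu_\ell}$ has domain $\{0\}$ with $R^{\Delta}(0,0)=1$. Over a one-element domain the unique projection $\Id_2$ (Example~\ref{expl:id}) is a fractional polymorphism, and on a one-element set it is trivially canonical and pseudo cyclic with respect to $\Aut(\bC_{\mu_\ell})$ (both sides of the cyclic identity are the constant map $0$, and there is only one orbit of $k$-tuples for each $k$). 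For $\mu_e=\exists x,y\,R(x,y)$: a finite structure satisfies $\neg\mu_e$ iff $R$ is empty, so a dual is again the one-element structure with empty $R$, and the same argument applies verbatim. For $\mu_c=\exists x,y\,(R(x,y)\wedge R(y,x))$: a finite structure satisfies $\neg\mu_c$ iff it has no directed $2$-cycle, equivalently iff the symmetric part of $R$ is empty, i.e.\ iff $R$ is an ``oriented'' edge set; the generic such structure is the homogeneous structure whose age is all finite digraphs with no $2$-cycle, which is finitely bounded and homogeneous, and one checks it is already a model-complete core (every orbit of a tuple is realized, and the endomorphism monoid equals the closure of the automorphism group because self-maps can only add edges, which is forbidden). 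Thus $\bC_{\mu_c}$ is this generic oriented-graph structure $\bP$, and $\Delta_{\mu_c}=\bP_0^1$ with $R^{\Delta}(x,y)=0$ if $(x,y)$ is an edge and $1$ otherwise.

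The only case with real content is $\mu_c$, where I would exhibit an explicit binary canonical pseudo cyclic fractional polymorphism of $\bP_0^1$. The natural candidate is a fractional operation $\omega$ of arity $2$ supported on canonical binary operations $f\colon P^2\to P$ that ``prefer edges'': given two orbits of pairs $(a_1,a_2)$ and $(b_1,b_2)$, one wants $f((a_1,a_2),(b_1,b_2))$ to lie in an edge whenever at least one of the two inputs does, so that the average cost drops. Concretely one can use the fact that $\bP$ has a homogeneous expansion $\bA$ by a linear order (or by the natural order on $\Q$ if one realizes $\bP$ on $\Q$), take two canonical operations $p,q$ that are ``edge-dominated'' and related by a near-unanimity-style swap of roles, and set $\omega(p)=\omega(q)=\tfrac12$; the canonicity with respect to $\Aut(\bA)$ is by construction, and the pseudo cyclic identity $e_1 f(x,y)=e_2 f(y,x)$ is witnessed by self-embeddings $e_1,e_2\in\overline{\Aut(\bA)}$ reversing the chosen order. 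Verifying $E_\omega[f\mapsto R^{\Delta}(f(t^1,t^2))]\le \tfrac12(R^{\Delta}(t^1)+R^{\Delta}(t^2))$ for all orbit-representatives of pairs $t^1,t^2\in P^2$ is then a finite orbit-by-orbit check.

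The main obstacle is the $\mu_c$ case: pinning down the right canonical binary operation on the generic oriented graph and checking the improvement inequality on all orbit pairs while simultaneously satisfying the pseudo cyclic identity. In principle this is routine because $\Aut(\bA)$ is oligomorphic so there are finitely many orbits to inspect, but getting an operation that is both edge-dominated (for the inequality) and cyclic up to self-embeddings (for the pseudo identity) requires the right setup; a clean way is to realize $\bP$ on $\Q$ as ``$x\to y$ iff ($x<y$ and some local pattern)'' and define $f$ by a formula in the two orders, then invoke homogeneity to produce the needed $e_1,e_2$. I expect the write-up to spend most of its length on this construction and on citing the precise orbit count for pairs in $\bP$, with $\mu_\ell$ and $\mu_e$ dispatched in a sentence each.
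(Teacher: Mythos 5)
Your identification of the duals is wrong in two of the three cases, and the third-case construction could not be repaired even with the right dual. For $\mu_\ell=\exists x\,R(x,x)$, the one-element structure with empty $R$ is a dual of $\mu_e$, not of $\mu_\ell$: a loopless digraph containing even a single edge has no homomorphism to an edgeless point, so $\bC_{\mu_\ell}$ is in fact the countably infinite complete loopless digraph (the paper handles this case with a binary injective operation given full weight). For $\mu_c$, the generic oriented graph is \emph{not} a model-complete core: your claim that ``self-maps can only add edges, which is forbidden'' is backwards, since homomorphisms need not preserve non-edges, and adding an edge between two non-adjacent vertices creates no $2$-cycle; hence that structure has non-embedding endomorphisms, and its model-complete core --- which is what $\bC_{\mu_c}$ is, by the paper's argument via homogeneity (Theorem~\ref{thm:mcc-dual}(3)) --- is the generic \emph{tournament}.

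More seriously, the binary fractional polymorphism you plan for the $\mu_c$ case cannot exist, on either your structure or the correct one. Take an edge $(a,b)$ and the input tuples $t^1=(a,b)$, $t^2=(b,a)$, so that $\tfrac12\bigl(R^{\Delta_\mu}(t^1)+R^{\Delta_\mu}(t^2)\bigr)=\tfrac12$. For any binary $f$ that is pseudo cyclic with respect to a group of self-embeddings, if $(f(a,b),f(b,a))$ were an edge, then applying $e_1$ and using $e_1f(x,y)=e_2f(y,x)$ together with the fact that $e_2$ is an embedding forces $(f(b,a),f(a,b))$ to be an edge as well, i.e.\ a $2$-cycle, which is excluded; the same applies with the roles reversed. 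Hence $(f(a,b),f(b,a))$ is a non-edge or a diagonal pair and has cost $1$, so any fractional operation concentrated on pseudo cyclic binary operations violates the improvement inequality on this orbit. This is why the paper's proof works at arity $3$: it embeds two auxiliary tournaments on $C_\mu^3$ (a majority-like and a minority-like one) into the generic tournament and takes the fractional operation with weights $2/3$ and $1/3$ on the two resulting canonical pseudo cyclic embeddings, and the inequality~\eqref{eq:maj_min} only balances because of this mixture. So the missing idea is not a bookkeeping detail but the arity-$3$ majority/minority construction; a ``finite orbit-by-orbit check'' of a binary candidate would necessarily fail. (Your $\mu_e$ case is fine, and the finite-boundedness/homogeneity hypotheses needed to invoke Theorem~\ref{thm:tract} also have to be checked, as the paper does in each case.)
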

\begin{proof}
Clearly, $\bC_{\mu_e}$ is a structure on $1$-element domain $\{c\}$ with $R^{\bC_{\mu_e}} = \emptyset$. Observe that $\bC_{\mu_e}$ is finitely bounded. Every fractional operation $\omega$ of arity $\geq 2$ is a canonical pseudo cyclic fractional polymorphism of $\Delta_{\mu_e}$ (with respect to $\Aut(\bC_{\mu_e})$). 

It is easy to see that $\bC_{\mu_\ell}$ has a countable domain $C_{\mu_\ell}$ and that $R^{\bC_{\mu_\ell}} = C_{\mu_\ell}^2 \setminus \{(c,c) \mid c \in C_{\mu_\ell}\}$. Note that $\Aut(\bC_{\mu_\ell}) = \Aut(\Delta_{\mu_\ell})$ is the full symmetric group on $C_{\mu_\ell}$. 
Observe that $\bC_{\mu_\ell}$ is finitely bounded, because a finite relational $\{R\}$-structure $\bA$ embeds into $\bC_{\mu_\ell}$ if and only if it satisfies the sentence $\forall x,y \big (\neg R(x,x) \wedge (x =y  \vee R(x,y)) \big)$. 
Let $f \colon C_{\mu_\ell}^2 \to C_{\mu_\ell}$ be injective and let $\omega$ be a binary fractional operation on $C_{\mu_\ell}$ defined by $\omega(f)=1$. It is straightforward to verify that $\omega$ is a fractional polymorphism of $\Delta_{\mu_\ell}$, which is canonical and pseudocyclic with respect to $\Aut(\bC_{\mu_\ell})$.

The only query for which the statement of the lemma is non-trivial is $\mu:= \mu_c$. The following proof is an adaptation of \cite[Example 5.23]{ThesisZaneta} for $\Delta_\mu$.
We show that $\Delta_\mu$ has a ternary canonical pseudo cyclic fractional polymorphism with respect to $\Aut(\bC_\mu)$. 
To increase readability, we write
$R$ for $R^{\bC_\mu}$ and write
$\breve{R}$ for $\{(a,b) \mid (b,a) \in R^{\bC_\mu}\}$. 

A \emph{tournament} is a directed loopless graph such that
between any two distinct vertices $a,b$, the graph contains either the edge $(a,b)$ or the edge $(b,a)$, but not both. Note that $\bC_{\mu}$ must be a tournament: $\mu$ excludes that there are vertices $a,b$ such that both $(a,b)$ and $(b,a)$ is an edge. Suppose for contradiction that there are vertices $a,b$ such that neither $(a,b)$ nor $(b,a)$ forms an edge. Then the graph obtained from 
$\bC_{\mu}$ by adding the edge $(a,b)$ does not satisfy $\mu$, and hence has a homomorphism to $\bC_{\mu}$.
This homomorphism is an endomorphism of $\bC_{\mu}$ which is not an embedding, contradicting the assumption that $\bC_{\mu}$ is a model-complete core.

By Theorem~\ref{thm:mcc-dual}, item (3), the structure $\bC_{\mu}$ is homogeneous, and hence is (isomorphic to) the homogeneous tournament which embeds all finite tournaments. Note that this implies that $\bC_\mu$ is finitely bounded: a finite relational $\{R\}$-structure $\bA$ embeds in to $\bC_\mu$ is and only if it satisfies the sentence \[\forall x,y \big (\neg R(x,x) \wedge \neg(R(x,y) \wedge R(y,x)) \wedge (x=y \vee R(x,y) \vee R(y,x)) \big ).\]
The valued structure $\bC_\mu$ has certain canonical ternary polymorphisms that we will define next; in some sense, they simulate the majority and minority behavior on orbits. 
Consider the tournament $\bT_{\text{majo}}$
whose vertex set is $C_\mu^3$ and which is defined as follows. We put an edge between $(x_0,x_1,x_2)$ and $(y_0,y_1,y_2)$ if 
    \begin{itemize}
        \item for some $i \in \{0,1,2\}$ we have 
        $x_i=y_i$ and $(x_{i+1},y_{i+1})$ forms an edge in $\bC_{\mu}$ (where indices are considered modulo 3), or 
        \item 
        $x_i \neq y_i$ for all $i \in \{0,1,2\}$, and
        $(x_i,y_i)$ is an edge in $\bC_{\mu}$ 
        for at least two distinct arguments $i \in \{1,2,3\}$. 
    \end{itemize}
Since $\bC_\mu$ is a tournament, $\bT_{\text{majo}}$ is a tournament and, in particular, $\bT_{\text{majo}} \models \neg \mu$. Then there exists a homomorphism $f$ from $\bT_{\text{majo}}$ to $\bC_{\mu}$, which is necessarily injective and an embedding.\footnote{The operation $f$ has already been described by Simon Kn\"auer in his Master thesis~\cite[Proposition 5.10]{KnaeuerMaster}. He also describes a minority variant, which is, however, different from the version we describe below (we will also point out what the difference is). The difference is important to later define a fractional polymorphism of $\Delta_{\mu}$.}
By the homogeneity of $\bC_\mu$, the orbits of $k$-tuples of $\Aut(\bC_\mu)$ are determined by the orbits of pairs of entries, and thus it is  clear from the definition that
$f$ is pseudo cyclic and canonical with respect to $\Aut(\bC_\mu)$.

The tournament $\bT_{\text{mino}}$ with vertex set is $C_\mu^3$ is defined analogously: we put an edge between $(x_0,x_1,x_2)$ and $(y_0,y_1,y_2)$ if 
    \begin{itemize}
           \item for some $i \in \{0,1,2\}$ we have 
        $x_i=y_i$ and $(y_{i+1},x_{i+1})$ forms an edge in $\bC_{\mu}$ (where indices are considered modulo 3)\footnote{Here, Kn\"auer in \cite{KnaeuerMaster} required $(x_{i+1},y_{i+1})$ to be an edge, rather than $(y_{i+1},x_{i+1})$.}, or 
        \item 
        $x_i \neq y_i$ for all $i \in \{0,1,2\}$, and
        $(x_i,y_i)$ is an edge in $\bC_{\mu}$ 
        for exactly one argument or exactly three arguments $i \in \{1,2,3\}$. 
    \end{itemize}
Similarly as for $\bT_{\text{majo}}$, we can verify that $\bT_\text{mino} \models \neg \mu$, hence, there exists a homomorphism $g$ from $\bT_{\text{mino}}$ to $\bC_{\mu}$, which is necessarily injective and an embedding. By the same argument as for $f$, the operation $g$ is pseudo cyclic and canonical with respect to $\Aut(\bC_\mu)$.

Let $\omega$ be the ternary fractional operation defined by $\omega(f)=2/3$ and $\omega(g)=1/3$. Note that $\omega$ is a pseudo cyclic and canonical ternary fractional operation on $C_\mu$. We show that $\omega \in \fPol(\Delta_\mu)$. Let $(x,u), (y,v), (z,w) \in C_\mu^2$. We want to verify that
\[E_\omega \left[h \mapsto R^{\Delta_\mu}\left(h \left(\begin{pmatrix}x\\u\end{pmatrix} , \begin{pmatrix}y\\v\end{pmatrix}, \begin{pmatrix}z\\w\end{pmatrix} \right) \right) \right] \leq \frac{1}{3}(R^{\Delta_\mu}(x,u)+R^{\Delta_\mu}(y,v)+R^{\Delta_\mu}(z,w)),\]
equivalently
\begin{align} \label{eq:maj_min}
\begin{split}
2 R^{\Delta_\mu}\left( f\left( \begin{pmatrix}x\\u\end{pmatrix}, \begin{pmatrix}y\\v\end{pmatrix}, \begin{pmatrix}z\\w\end{pmatrix} \right) \right) &+ 
R^{\Delta_\mu}\left(g \left( \begin{pmatrix}x\\u\end{pmatrix}, \begin{pmatrix}y\\v\end{pmatrix}, \begin{pmatrix}z\\w\end{pmatrix} \right)\right) \\
&\leq R^{\Delta_\mu}(x,u)+R^{\Delta_\mu}(y,v)+R^{\Delta_\mu}(z,w).
\end{split}
\end{align}
It is straightforward to verify that no matter how we distribute the tuples 
$(x,u)$, $(y,v)$, and $(z,w)$ between the sets $R$, $\breve{R}$, and $\{(a,a) \mid a \in C_\mu \}$, the inequality~\eqref{eq:maj_min} is satisfied.
We conclude that $\omega \in \fPol(\Delta_\mu)$ and hence $\Delta_\mu$ has a canonical pseudo cyclic fractional polymorphism.

Let $\mu \in \{\mu_\ell, \mu_e, \mu_c\}$. Then the Gaifman graph of $\mu$ is complete and hence, by Theorem~\ref{thm:mcc-dual}, $\bC_\mu$ is homogeneous. In each of the cases, we also showed that $\bC_\mu$ is finitely bounded and that it has a canonical pseudo cyclic fractional polymorphism with respect to $\Aut(\bC_\mu)$. Therefore, by Theorem~\ref{thm:tract}, the resilience problem for $\mu$ is in P.
\end{proof}

\section{Self-join-free queries and self-join variations}
\emph{Self-join-freeness} is a fundamental and frequently used concept in database theory.

\begin{definition}[self-join-free queries]
A union of conjunctive queries $\mu$ is called \emph{self-join-free} if every relation symbol appears at most once in $\mu$.
\end{definition}
Note that this is a more restrictive notion than a union of self-join-free conjunctive queries.

\begin{lemma}\label{lem:sjf-ucq}
Let $\mu$ be a self-join-free union of conjunctive queries over the signature $\tau$ containing a conjunctive query $\nu$ with signature $\rho \subseteq \tau$. 
Let $\bB$ be the $\rho$-reduct of $\bC_\mu$.
Then $\bB$ is a dual of $\nu$, and the $\rho$-reduct of $\Delta_\mu$ is equal to $\bB_0^1$.
\end{lemma}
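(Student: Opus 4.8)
The plan is to show the two claims separately, both following from the definition of the model-complete core dual $\bC_\mu$ and the fact that $\mu$ is self-join-free.

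First I would prove that $\bB$ is a dual of $\nu$. Let $\rho' := \tau \setminus \rho$ be the relation symbols appearing only in queries of $\mu$ other than $\nu$, and recall that since $\mu$ is self-join-free, the queries in $\mu$ partition the signature $\tau$. Take a finite $\rho$-structure $\bA$, and let $\bA^+$ be its expansion to $\tau$ in which all $\rho'$-relations are empty. Since the relation symbols of $\rho'$ occur only in the other queries of $\mu$, and those queries all have at least one atom, no query of $\mu$ other than $\nu$ is satisfied in $\bA^+$; hence $\bA^+ \models \neg\mu$ if and only if $\bA \models \neg\nu$. By Theorem~\ref{thm:mcc-dual}(2), $\bA^+ \models \neg\mu$ iff there is a homomorphism $\bA^+ \to \bC_\mu$, and such a homomorphism restricts to a homomorphism $\bA \to \bB$ on the $\rho$-reducts. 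Conversely, any homomorphism $\bA \to \bB$ extends to a homomorphism $\bA^+ \to \bC_\mu$ because the $\rho'$-relations of $\bA^+$ are empty. Therefore $\bA \models \neg\nu$ iff $\bA \to \bB$, i.e. $\bB$ is a dual of $\nu$. (One should also note $\Aut(\bC_\mu)$ is oligomorphic by Theorem~\ref{thm:mcc-dual}(4), so $\bB$ is a well-behaved dual in the sense needed later; but the defining property is what matters here.)

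Second, the statement that the $\rho$-reduct of $\Delta_\mu = (\bC_\mu)_0^1$ equals $\bB_0^1$ is immediate from the definitions: by Definition~\ref{def:valued-dual}, for each $R \in \rho$ we have $R^{\Delta_\mu}(a) = 0$ if $a \in R^{\bC_\mu}$ and $1$ otherwise, and since $R^{\bB} = R^{\bC_\mu}$ (as $\bB$ is the $\rho$-reduct of $\bC_\mu$), this is exactly $R^{\bB_0^1}(a)$. So the $\rho$-reducts agree symbol by symbol.

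I do not expect a genuine obstacle here; the only point requiring care is the direction "$\bA \models \neg\nu \implies \bA \to \bB$", where one must be sure that setting the remaining relations $\rho'$ to $\emptyset$ really kills all other disjuncts of $\mu$ — this is exactly where self-join-freeness (each $\rho'$-symbol appears only in queries $\neq \nu$, each of which has a nonempty conjunction of atoms) is used, and it is the crux of the argument. A minor subtlety is that Theorem~\ref{thm:mcc-dual}(2) is stated for countable structures, which is fine since finite structures are countable and the canonical database / homomorphism machinery is unaffected.
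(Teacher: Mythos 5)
Your proposal is correct and follows essentially the same route as the paper: expand a $\rho$-structure $\bA$ by empty interpretations of the symbols in $\tau \setminus \rho$, use self-join-freeness to see that $\bA \models \neg\nu$ iff the expansion satisfies $\neg\mu$, invoke the dual property of $\bC_\mu$, and restrict/extend homomorphisms; the second claim is, as you say, immediate from Definition~\ref{def:valued-dual}. The only cosmetic quibble is that the queries of $\mu$ need not \emph{partition} $\tau$ (some symbols may not occur in $\mu$ at all), but this plays no role in the argument.
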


\begin{proof}
Clearly, $\bB \models \neg \nu$. Let $\bA$ be a finite relational $\rho$-structure such that $\bA \models \neg \nu$. Let $\bA'$ be a $\tau$-expansion of $\bA$ where $R^{\bA'}$ for every $R \in \tau \setminus \rho$ is empty. Then $\bA' \models \neg \mu$ and hence has a homomorphism to $\bC_\mu$. The same map is a homomorphism from $\bA$ to $\bB$. It follows that $\bB$ is a dual of $\nu$.
The last statement
is clear from the definitions.
\end{proof}

We introduce a construction for obtaining queries with self joins from self-join-free queries, which will be crucial in our hardness proofs.

\begin{definition} \label{def:self-join-var}
Let $\nu$ be a self-join-free union of conjunctive queries over the signature $\tau$ and let
$f \colon \tau \to \tau$ be a map that preserves the arities.
Then the union of queries resulting from $\nu$ by replacing each atom $R(x_1, \dots, x_k)$ by $f(R)(x_1, \dots, x_k)$ is denoted by $f(\nu)$.
We say that $f$ is \emph{$\nu$-injective} if for all $R, S \in \tau$ of the same arity $k$ such that $\nu$ contains a query with atoms $R(x_1, \dots, x_k)$ and $S(x_1, \dots, x_k)$ for some variables $x_1, \dots, x_k$, $f(R) \neq f(S)$.
\end{definition}

A union of queries of the form $f(\nu)$ for some self-join-free $\nu$ and arity-preserving $f$ is often called a \emph{self-join variation} of $\nu$ in the literature~\cite{NewResilience}.

\begin{lemma} \label{lem:self-join-var}
Every union of minimal conjunctive queries $\mu$ over a signature $\sigma$ can be written as $f(\nu)$ for some
self-join-free union of conjunctive queries $\nu$ with signature $\tau$ containing $\sigma$ and some $\nu$-injective $f \colon \tau \to \sigma$. 
\end{lemma}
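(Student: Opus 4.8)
The plan is to construct $\nu$ from $\mu$ by giving a private fresh relation symbol to each atom of $\mu$, and to let $f$ be the map that undoes this renaming. In detail, write $\mu = \mu_1 \vee \cdots \vee \mu_m$, where each $\mu_j$ is a minimal conjunctive query over $\sigma$. For every $j \in \{1,\dots,m\}$ and every atom $A = R(x_1,\dots,x_k)$ of $\mu_j$ I would introduce a new relation symbol $R_{j,A}$ of arity $k$, the $R_{j,A}$ chosen pairwise distinct and disjoint from $\sigma$; set $\tau := \sigma \cup \{R_{j,A}\}$; let $\nu_j$ be the query obtained from $\mu_j$ by rewriting each atom $A = R(x_1,\dots,x_k)$ to $R_{j,A}(x_1,\dots,x_k)$; put $\nu := \nu_1 \vee \cdots \vee \nu_m$; and define $f \colon \tau \to \sigma$ to be the identity on $\sigma$ and to send $R_{j,A} \mapsto R$. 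This $f$ preserves arities, and $\tau \supseteq \sigma$ as required.

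After this setup the proof consists of three short checks, in the following order. First, $f(\nu) = \mu$, which is immediate since $f$ inverts the renaming atom by atom (so $f(\nu_j) = \mu_j$ for each $j$). Second, $\nu$ is self-join-free: no symbol of $\sigma$ occurs in $\nu$ at all, and each $R_{j,A}$ occurs in exactly one atom of $\nu$, because the fresh symbols were chosen pairwise distinct and each atom of $\mu_j$ is rewritten to exactly one atom of $\nu_j$. Third, $f$ is $\nu$-injective: suppose some disjunct of $\nu$ contains atoms $R_{j,A}(x_1,\dots,x_k)$ and $R_{j',A'}(x_1,\dots,x_k)$ on the same variable tuple with $f(R_{j,A}) = f(R_{j',A'})$; since each fresh symbol occurs in only one $\nu_j$, we get $j = j'$, and unwinding the renaming shows that $\mu_j$ contains the atom $A = f(R_{j,A})(x_1,\dots,x_k)$ and the atom $A' = f(R_{j,A'})(x_1,\dots,x_k)$. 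As these have the same relation symbol and the same variable tuple, $A$ and $A'$ are the same atom of $\mu_j$, forcing $R_{j,A} = R_{j,A'}$, a contradiction.

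I expect the third check to be the only one needing care, and the one substantive ingredient there is that a minimal conjunctive query has no repeated atom (if one models the conjuncts as a multiset, a repeated conjunct may be deleted to give an equivalent query with strictly fewer atoms, contradicting minimality; if one models them as a set, this is automatic and minimality is not even needed for this step). Everything else is bookkeeping with the fresh symbols and the definition of $f(\nu)$, so the lemma should follow quickly once the renaming is set up carefully.
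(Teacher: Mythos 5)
Your proposal is correct and follows essentially the same route as the paper: rename each atom occurrence of $\mu$ with a fresh relation symbol to get a self-join-free $\nu$, let $f$ collapse the fresh symbols back, and use minimality (no repeated atoms in a minimal query) to get $\nu$-injectivity. The only difference is cosmetic bookkeeping (indexing fresh symbols by disjunct and atom rather than by counting occurrences of each symbol, and keeping $\sigma$ disjoint from the fresh symbols instead of reusing $R$ as $R_0$).
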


\begin{proof}
For every $R \in \sigma$, let $n_R$ be the number of occurrences of $R$ in $\mu$ and let $R_0 := R$. Let $\tau = \bigcup_{R \in \sigma} \{R_0, R_1, \dots, R_{n_R-1}\}$, where all symbols $R_i$, $i \geq 1$, are fresh and of the same arity as $R$. Define $\nu$ to be the union of conjunctive queries obtained from $\mu$ by replacing the occurrences of $R$ in $\mu$ by $R_0, \dots, R_{n_R-1}$ (each of the symbols is used once) for every $R \in \sigma$; observe that $\nu$ is self-join-free. Let $f \colon \tau \to \sigma \subseteq \tau$ be defined by $f(R_0) = \dots = f(R_{n_R-1}) = R$, $R \in \sigma$. Then $f(\nu)=\mu$. Moreover, $f$ is $\nu$-injective, because queries in $\mu$ are minimal and therefore contain each atom at most once.
\end{proof}

We proceed to present the main result of this section -- Theorem~\ref{thm:self-join-var}. The theorem and its proof is inspired by \cite[Lemma 21]{NewResilience}; their result is a special case of Theorem~\ref{thm:self-join-var}, because it only applies to conjunctive queries rather than unions of conjunctive queries, and because it only states a polynomial-time reduction, whereas our result even provides a pp-construction (which implies a polynomial-time reduction via Lemma~\ref{lem:hard}). 

\begin{theorem}\label{thm:self-join-var}
Let $\nu$ be a self-join-free union of connected conjunctive queries over the signature $\tau$ and let $f \colon \tau \to \tau$ be a $\nu$-injective
map that preserves arities. If all queries in $f(\nu)$ are minimal and pairwise non-equivalent, then $\Delta_{f(\nu)}$ pp-constructs $\Delta_{\nu}$. In particular, the resilience problem for $\nu$ reduces in polynomial time to the resilience problem for $f(\nu)$.
\end{theorem}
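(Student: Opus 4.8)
The plan is to produce a pp-construction of $\Delta_\nu$ inside $\Delta_{f(\nu)}$ by exhibiting $\Delta_\nu$ (up to fractional homomorphic equivalence) as a first pp-power of $\Delta_{f(\nu)}$, i.e. as a structure whose valued relations are pp-expressible from the valued relations of $\Delta_{f(\nu)}$ on the same domain $D := C_{f(\nu)}$. The starting point is Lemma~\ref{lem:sjf-ucq} and its underlying philosophy: because $f$ is $\nu$-injective, within each query of $\nu$ the atoms use pairwise distinct symbols of $\tau$, and $f$ does not collapse two symbols occurring together in a single query. So even though $f$ may identify symbols that occur in \emph{different} queries of $\nu$, the restriction of $f$ to the symbols of any one query is injective. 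I would use this to transfer the dual: I claim $\bC_{f(\nu)}$, read through $f$ (i.e. interpreting each $\tau$-symbol $S$ by $f(S)^{\bC_{f(\nu)}}$), is a dual of $\nu$, hence homomorphically equivalent to $\bC_\nu$, and correspondingly the valued structure $(\bC_{f(\nu)})_0^1$ read through $f$ is fractionally homomorphically equivalent to $\Delta_\nu = (\bC_\nu)_0^1$ (by Remark~\ref{rem:fhom-eq}, with probability-$1$ fractional maps).

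The key step is therefore to verify the dual claim: for a finite $\{S : S \in \tau\}$-structure $\bA$ (thought of as a $\tau$-structure), $\bA \models \neg\nu$ iff the map $S \mapsto f(S)$ turns $\bA$ into a $\tau$-structure admitting a homomorphism to $\bC_{f(\nu)}$. Given $\bA \models \neg\nu$, form the $\tau$-structure $\bA'$ on the same domain with $S^{\bA'} := $ the union over all $R \in \tau$ with $f(R) = S$ of $R^{\bA}$ (we push all of $\bA$'s relations forward along $f$). Since within each query of $\nu$ the symbols are $f$-distinct, a witness of $\bA' \models f(\nu)$ would pull back to a witness of $\bA \models \nu$ — this is exactly the $\nu$-injectivity bookkeeping, and it is where one must be careful: a homomorphic image in $f(\nu)$ of a single query of $\nu$ only uses symbols $f(R)$ for $R$ in that query, and these are distinct, so the preimage relations $R^{\bA}$ are unambiguously recovered. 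Hence $\bA' \models \neg f(\nu)$, so $\bA'$ maps homomorphically to $\bC_{f(\nu)}$, and pulling back along $f$ gives the desired homomorphism of $\bA$. The converse direction is the same argument run backwards, and is easier. This shows the $f$-pullback reduct of $\bC_{f(\nu)}$ is a dual of $\nu$; combined with the uniqueness of duals up to homomorphic equivalence (compactness), and with the fact that the $f$-pullback operation at the valued level sends $(\bC_{f(\nu)})_0^1$ to a valued structure whose relations are literally relations of $\Delta_{f(\nu)}$ (each $S^{\Delta_\nu}$-candidate is interpreted by $f(S)^{\Delta_{f(\nu)}}$, an identity pp-power of arity $1$), we obtain that $\Delta_\nu$ is fractionally homomorphically equivalent to a first pp-power of $\Delta_{f(\nu)}$, i.e. $\Delta_{f(\nu)}$ pp-constructs $\Delta_\nu$.

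The final sentence of the statement then follows immediately: by Lemma~\ref{lem:hard}, a pp-construction between valued structures with finite signatures and oligomorphic automorphism groups (which holds here by Theorem~\ref{thm:mcc-dual}(4)) yields a polynomial-time reduction from $\VCSP(\Delta_\nu)$ to $\VCSP(\Delta_{f(\nu)})$; and by Proposition~\ref{prop:connection} these VCSPs are polynomial-time equivalent to the respective resilience problems (using that $\nu$ and $f(\nu)$ are unions of connected conjunctive queries — connectedness of $f(\nu)$ follows from connectedness of $\nu$ since $f$ preserves the Gaifman graph).

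\textbf{Main obstacle.} The delicate point is entirely in the dual-pullback verification: one must make sure that the ``push forward along $f$, then pull back'' round trip is faithful on satisfaction of $\nu$, and the only reason it is is the $\nu$-injectivity hypothesis, which guarantees that the symbols appearing together in any one query of $\nu$ are not merged by $f$ — so no spurious satisfying assignment of $f(\nu)$ is created by the identification of symbols living in distinct queries. I would also double-check the bookkeeping that $f(\nu)$ having minimal, pairwise non-equivalent queries is used (it is what legitimizes speaking of $\Delta_{f(\nu)}$ via Theorem~\ref{thm:mcc-dual} and Lemma~\ref{lem:con}), though it is not needed for the pp-construction argument itself beyond ensuring the objects are well-defined.
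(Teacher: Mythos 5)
There is a genuine gap: your central claim, that the ``$f$-pullback'' of $\bC_{f(\nu)}$ on the \emph{same} domain (interpreting each $S \in \tau$ by $f(S)^{\bC_{f(\nu)}}$) is a dual of $\nu$, is false, and the step where you assert that a witness of $\bA' \models f(\nu)$ pulls back to a witness of $\bA \models \nu$ does not hold. Note first that $\nu$-injectivity is weaker than you assume: it only forbids $f$ from merging two symbols that occur in one query of $\nu$ \emph{on the same variable tuple}, not ``the restriction of $f$ to the symbols of any one query is injective''. Concretely, take $\nu = \exists x,y\,(R(x,y) \wedge S(y,x))$ and $f(R)=f(S)=R$, so $f(\nu)=\mu_c$ and $f$ is $\nu$-injective (this is exactly the example in Figure~\ref{fig:sjf}). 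Let $\bA$ have domain $\{a,b\}$ with $R^{\bA}=\{(a,b),(b,a)\}$ and $S^{\bA}=\emptyset$. Then $\bA \models \neg\nu$, but the pushforward $\bA'$ (same domain, $R^{\bA'}=\{(a,b),(b,a)\}$) satisfies $\mu_c$, and $\bA$ has no homomorphism to your pullback structure, since $\bC_{\mu_c}$ is a tournament and a homomorphism would need both $(h(a),h(b))$ and $(h(b),h(a))$ to be edges. So the round trip is not faithful: a tuple of $f(R)^{\bA'}$ may originate from a different preimage symbol $R'$ with $f(R')=f(R)$ (or from the same symbol but feeding a different atom of $\nu$), and nothing in the same-domain construction prevents such spurious witnesses of $f(\nu)$.

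This is precisely why the paper does not work on the same domain: it builds a $|V\times Q|$-dimensional pp-power of $\Delta_{f(\nu)}$, indexing coordinates by (variable, query) pairs, and in the dual verification blows a given $\bA \models \neg\nu$ up to a structure $\bA'$ on domain $A \times V \times Q$, placing each tuple only in the layer tagged by the atom it instantiates. Connectedness of the queries then forces any witness of $f(\nu)$ in $\bA'$ to stay within a single query-tag, and minimality together with pairwise homomorphic incomparability of the queries of $f(\nu)$ forces the induced variable map to be a permutation of the query's variables, which combined with $\nu$-injectivity lets one pull the witness back to $\bA$. In particular, the hypotheses you dismiss at the end as mere bookkeeping (minimality and pairwise non-equivalence of the queries of $f(\nu)$) are used essentially in this argument, not just to make $\Delta_{f(\nu)}$ well-defined. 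Your Lemma~\ref{lem:sjf-ucq}-style reduct idea cannot replace this: that lemma goes in the opposite direction (from the self-join-free $\mu$ to one of its queries) and does not encounter the symbol-merging problem at all.
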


\begin{proof}
Let $V$ be the finite set of variables of $\nu$,  which is also the set of variables of $\mu := f(\nu)$ and let $Q$ be the set of conjunctive queries that form the union $\nu$. Note that since all queries in $\nu$ are connected, the same is true for $\mu$. Also, since all queries in $\mu$ are pairwise non-equivalent and minimal, they are pairwise homomorphically incomparable (see Section~\ref{sect:cq}).

Let $D_\mu$ be the domain of $\Delta_\mu$. Let $D := (D_\mu)^{V\times Q}$, i.e., $D$ is a finite power of $D_\mu$; it will be more convenient to use $V \times Q$ as an indexing set rather than the set $\{1, \dots, |V \times Q|\}$. We define a pp-power $\Delta$ of $\Delta_\mu$ on the domain $D$ with the signature $\tau$. For every $R \in \tau$
of arity $k$ and $(d^1, \dots, d^k) \in D^k$, if $R(x_1, \dots, x_k)$ is an atom in a query $\nu_0$ in $\nu$, then
\[R^\Delta(d^1, \dots, d^k):= f(R)^{\Delta_\mu}(d^1_{x_1, \nu_0}, \dots, d^k_{x_k, \nu_0}).\]
The idea is that the combination of  query, relation symbol, and variables uniquely identifies an atom in $\mu$ and therefore encodes the difference between relation symbols $R$ and $R'$ from $\tau$ such that $f(R)=f(R')$. 

Note that since relations in $\Delta_\mu$ are $0$-$1$-valued, the same is true for the relations in $\Delta$ and hence $\Delta = \bB_0^1$ for a relational $\tau$-structure $\bB$ on the domain $D$ where for every $R \in \tau$ and $(d^1, \dots, d^k) \in D^k$, if $R(x_1, \dots, x_k)$ is an atom in a query $\nu_0$ in  $\nu$, then 
\[\bB \models R(d^1, \dots, d^k) \Leftrightarrow \bC_\mu \models f(R)(d^1_{x_1,\nu_0}, \dots, d^k_{x_k,\nu_0})\]
(see Figure~\ref{fig:sjf} for an illustration of the relationship between $\bC_\mu$ and $\bB$).

\begin{figure}
    \centering
    \includegraphics[]{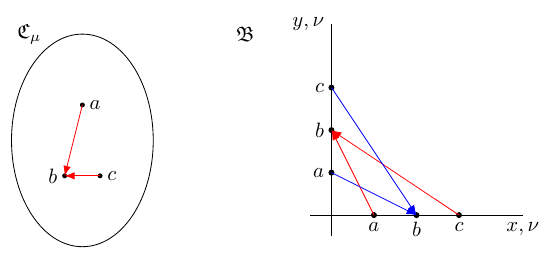}
    \caption{An illustration of the relationship between the structures $\bC_\mu$ and $\bB$ from the proof of Theorem~\ref{thm:self-join-var} for $\nu = \exists x,y (R(x,y) \wedge S(y,x))$ and $\mu = \exists x,y (R(x,y) \wedge R(y,x))$. The tuples in $R$ are depicted in red and tuples in $S$ in blue.}
    \label{fig:sjf}
\end{figure}


\subparagraph*{Claim.} $\bB$ is a dual of $\nu$.

To see this, we first argue that $\bB \not \models \nu$. If $\alpha \colon V \to D$ witnesses that $\bB \models \nu_0$ where $\nu_0$ is a query in $\nu$, then it is straightforward to verify that the map $\alpha' \colon V \to D_\mu$ defined by $v \mapsto \alpha(v)_{v,\nu_0}$ witnesses that $\bC_\mu \models f(\nu_0)$ and hence $\bC_\mu \models \mu$, a contradiction with $\bC_\mu$ being a dual for $\mu$. Therefore, $\bB \not \models \nu$.

It remains to show that if $\bA$ is a relational $\tau$-structure on a finite domain $A$ such that $\bA \not \models \nu$, then $\bA$ maps homomorphically into $\bB$.  To this end, we construct an $f(\tau)$-structure $\bA'$ on the domain $A' = \{a_{v,\nu_0} \mid a \in A, v \in V, \nu_0 \in Q\}$. For every $R \in \tau$, if $R(x_1, \dots, x_k)$ is an atom in a query $\nu_0$ in $\nu$ and $(a_1, \dots, a_k) \in R^\bA$, we put the tuple $((a_1)_{x_1, \nu_0}, \dots, (a_k)_{x_k, \nu_0})$ in $f(R)^{\bA'}$. No other tuples are in the relations of $\bA'$.

 We argue that $\bA' \not \models \mu$. Suppose for contradiction that there exists a a conjunctive query $\mu_0$ in $\mu$ over a variable set $V_0 \subseteq V$ and a map $\beta \colon V_0 \to A'$ witnessing that $\bA' \models \mu_0$. Then for every atom $f(R)(x_1, \dots, x_k)$ in $\mu_0$ we have $(\beta(x_1), \dots, \beta(x_k)) \in f(R)^{\bA'}$. 
We define maps $\beta_Q \colon V_0 \to Q$ and $\beta_V \colon V_0 \to V$
by setting $\beta_Q(x) := \nu_0$ and $\beta_V(x) :=y$ where $\nu_0 \in Q$ and $y \in V$ are such that $\beta(x)=a_{y, \nu_0}$ for some $a \in A$.
Recall that $\mu_0$ is connected. Therefore, by the construction, $\beta_Q$ is constant; let $\nu_0 \in Q$ be the only element of the image of $\beta_Q$. 
 
 Let $f(R)(x_1, \dots, x_k)$ be an atom in $\mu_0$.
Since the tuple $(\beta(x_1), \dots, \beta(x_k))$ has been put in $f(R)^{\bA'}$, $\nu_0$ contains an atom $R'(\beta_V(x_1), \dots, \beta_V(x_k))$ where $R' \in \tau$ is such that $f(R')=f(R)$. Therefore, there is an atom $f(R)(\beta_V(x_1), \dots, \beta_V(x_k))$ in $f(\nu_0)$. Hence, $\beta_V$ defines a homomorphism from $\mu_0$ to $f(\nu_0)$. Since the queries in $\mu$ are pairwise homomorphically incomparable, we must have $f(\nu_0)=\mu_0$. 
Moreover, since $f(R)(x_1, \dots, x_k)$ is an atom in $\mu_0$, we must have the atoms $f(R)(\beta_V(x_1), \dots, \beta_V(x_k))$,
$f(R)(\beta^2_V(x_1), \dots, \beta^2_V(x_k))$, \dots, $f(R)(\beta^p_V(x_1), \dots, \beta^p_V(x_k))$ in $f(\nu_0)=\mu_0$, for all $p \in {\mathbb N}$.

Since $\beta_V$ is a homomorphism from $\mu_0$ to $\mu_0$ and $\mu_0$ is minimal, 
the image of $\beta_V$ is equal to $V_0$. Therefore, $\beta_V \colon V_0 \to V_0$ is surjective.
Since $V_0$ is a finite set, this implies that $\beta_V$ is
a permutation of $V_0$ with an inverse $\beta_V^{-1} = \beta_V^p$ for some $p \in \N$.
By the previous paragraph 
$f(R)(\beta_V^{-1}(x_1), \dots, \beta_V^{-1}(x_k))$ is an atom in $\mu_0$.

Let $\beta'\colon V\to A$ be any map satisfying for every $x \in V_0$ that $\beta'(x) =a$ for the $a \in A$ such that $\beta(\beta_V^{-1}(x))=a_{x,\nu_0}$. Then for every atom $R(x_1, \dots, x_k)$ in $\nu_0$,
we have that $f(R)(x_1, \dots, x_k)$ is an atom of $\mu_0$, and therefore $f(R)(\beta_V^{-1}(x_1), \dots, \beta_V^{-1}(x_k))$ is an atom of $\mu_0$ as well. 
Since $\beta$ witnesses that $\mu_0$ holds in $\bA'$, 
\[(\beta'(x_1)_{x_1,\nu_0}, \dots, \beta'(x_k)_{x_k, \nu_0}) = (\beta(\beta_V^{-1}(x_1)), \dots, \beta(\beta_V^{-1}(x_k))) \in f(R)^{\bA'}.\]
By the $\nu$-injectivity of $f$, there is no atom $R'(x_1, \dots, x_k)$ in $\nu_0$ with $f(R')=f(R)$, so we must have $(\beta'(x_1), \dots, \beta'(x_k)) \in R^\bA$ by the definition of $\bA'$.
Thus, $\beta'$ witnesses that $\bA \models \nu_0$ {and hence, $\bA \models \nu$}, a contradiction. It follows that $\bA' \not \models \mu$.

Since $\bC_\mu$ is a dual of $\mu$, there is a homomorphism $h' \colon \bA' \to \bC_\mu$. 
Let $h \colon \bA \to \bB$ be defined by $h(a) := (h'(a_{v, \nu_0}))_{v\in V, \nu_0 \in Q}$. We claim that $h$ a homomorphism from $\bA$ to $\bB$. To see this, let $R\in \tau$ be of arity $k$ and $(a_1, \dots, a_k) \in R^\bA$. Let $\nu_0$ be a query in $\nu$ with an atom $R(x_1, \dots, x_k)$. Then $((a_1)_{x_1, \nu_0}, \dots, (a_k)_{x_k, \nu_0}) \in f(R)^{\bA'}$ and since $h'$ is a homomorphism, $(h'((a_1)_{x_1,\nu_0}), \dots, h'((a_k)_{x_k, \nu_0})) \in f(R)^{\bC_\mu}$. Then, by the definition of $\bB$,
\[(h(a_1), \dots, h(a_k)) = ((h'((a_1)_{v, \nu_0}))_{v \in V, \nu_0 \in Q}, \dots, (h'((a_k)_{v,\nu_0}))_{v \in V, \nu_0 \in Q}) \in R^{\bB}. \]
It follows that $\bB$ is a dual of $\nu$.

\medskip

Since $\bB$ and $\bC_\nu$ are duals of $\nu$, they are homomorphically equivalent and hence $\Delta$ and $\Delta_\nu$ are fractionally homomorphically equivalent (see Remark~\ref{rem:fhom-eq}). Since $\Delta$ is a pp-power of $\Delta_\mu$, it follows that $\Delta_\mu$ pp-constructs $\Delta_\nu$. The final statement of the theorem follows from Lemma~\ref{lem:hard} and Proposition~\ref{prop:connection}.
\end{proof}

\section{Hardness proofs}
The goal of this section is to present several hardness results that will be used in the proof of Theorem~\ref{thm:main-ucq}.
First we have to define several graph-theoretical notions that will be useful in this section. Let $\bG = (V; E)$ be a directed multigraph and $k \in \N$. A \emph{directed walk} in $\bG$ of \emph{length} $k$ is a sequence $W = (v_0, v_1, \dots, v_k)$ of elements of $V$ such that $(v_i, v_{i+1}) \in E$ for every $i \in \{0, \dots, k-1\}$. The walk $W$ is \emph{closed} if $v_0=v_k$. A \emph{directed path} in $\bG$ of \emph{length} $k$ is a directed walk $(v_0, \dots, v_k)$ such that $v_i \neq v_j$ for all distinct $i,j \in \{0, \dots, k\}$. A \emph{directed cycle} in $\bG$ of length $k$  is a closed directed walk $(v_0, \dots, v_k)$ such that $v_i \neq v_j$ for all distinct $i,j \in \{0, \dots, k-1\}$. An \emph{oriented cycle} in $\bG$ of length $k$ is a sequence $(v_0, v_1, \dots, v_k)$ of elements of $V$ such that $v_k=v_0$, for every $i \in \{0, \dots, k-1\}$, $(v_i, v_{i+1}) \in E$ or $(v_{i+1}, v_i) \in E$, and for every $j \in \{0, \dots, k-1\}$, $j \neq i$, $v_i \neq v_j$.

Suppose now that $\bG$ is undirected. A \emph{cycle in $\bG$} is any sequence that forms an oriented cycle in $\bG$ when viewed as a directed multigraph. We say that $\bG$ is a \emph{tree} if it does not contain any cycles and if it is \emph{connected} in the sense that the graph obtained from $\bG$ by replacing multiple edges by single edges 
is connected (see Section~\ref{sect:cq}).

\subsection{Hardness for queries with orientations of cycles}

A signature $\tau$ is called \emph{binary} if all relation symbols in $\tau$ are binary. 
In this section, we work with binary signatures in general rather than just the signature $\{R\}$.
For any conjunctive query $\mu$ over a binary signature $\tau$, let $\multigr(\mu)$
denote the undirected multigraph whose edge relation is the union (as a multiset) of all the relations of the canonical database $\bD_\mu$.

In this section we prove hardness for the resilience problem for minimal connected conjunctive queries $\mu$ over a binary signature such that $\multigr(\mu)$ contains a cycle of length at least $3$, and, more generally, for unions that contain such a query.
To this end, we start with a result about self-join-free conjunctive queries, which together with Theorem~\ref{thm:self-join-var} will yield a hardness proof for any query over a binary signature. 

\begin{theorem} \label{thm:sjf-cycle}
Let $\mu$ be a connected self-join-free conjunctive query over a binary signature $\tau$.
If $\multigr(\mu)$ contains a cycle of length $\geq 3$, then
$\Delta_\mu$ pp-constructs $(\{0,1\}, \OIT)$.
\end{theorem}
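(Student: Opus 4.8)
The plan is to reduce to the known hardness of directed max-cut (Example~\ref{expl:max-cut-hard}), i.e., to show that $\Delta_\mu$ pp-constructs the valued structure $\Gamma_{\textup{MC}}$, which by transitivity of pp-constructions and Example~\ref{expl:max-cut-hard} yields a pp-construction of $(\{0,1\},\OIT)$. By Lemma~\ref{lem:sjf-ucq}, when $\mu$ is self-join-free and $\nu$ is one of its conjuncts (here $\mu$ is itself a single connected query), the relevant reduct of $\Delta_\mu$ is $\bB_0^1$ for the corresponding reduct $\bB$ of $\bC_\mu$; more precisely, since $\mu$ is self-join-free, each binary relation symbol of $\tau$ occurs exactly once in $\mu$, and the structure $\bC_\mu$ is (essentially) the homomorphism-dual of the single ``cyclic'' obstruction coming from $\multigr(\mu)$. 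So the first step is to fix a cycle $C = (v_0, v_1, \dots, v_{k-1}, v_0)$ of length $k \geq 3$ in $\multigr(\mu)$, with each edge labelled by a relation symbol and an orientation (forward or backward along the cycle).

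The key idea is that a directed graph $G$ ``contains no homomorphic copy of $\mu$'' is, via the cycle $C$, closely tied to a $2$-colouring / consistency condition along cycles of $G$, which is exactly what directed max-cut detects. Concretely, I would build a $1$-st pp-power (or a small pp-power) of $\Delta_\mu$ whose single binary relation mimics $R^{\Gamma_{\textup{MC}}}$: using the pp-expressions available in $\langle \Delta_\mu\rangle$ (sums of atomic expressions, projection, $\Opt$, $\Feas$, shifting, scaling), encode an edge of the target as the ``cheapest way'' to route the cycle $C$ through two designated vertices of $\bC_\mu$, so that traversing the cycle the ``wrong way'' costs $1$ and the ``right way'' costs $0$. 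The orientations of the edges along $C$ determine which direction is cheap; since $k \geq 3$, the cycle is long enough that one can pp-express a relation separating ``monochromatic/back-edge'' pairs from ``forward'' pairs, which is the behaviour of $R^{\Gamma_{\textup{MC}}}(x,y) = [\,\neg(x=0 \wedge y=1)\,]$. Then fractional homomorphic equivalence (Remark~\ref{rem:fhom-eq}) lets us pass between $\Delta_\mu$ and $\bB_0^1$ for any dual $\bB$ of $\mu$, and Lemma~\ref{lem:hard} converts the pp-construction into the desired NP-hardness, though here we only need the pp-construction statement itself.

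The main obstacle I anticipate is controlling the dual structure $\bC_\mu$ precisely enough: $\bC_\mu$ is the model-complete core of an infinite $\omega$-categorical dual, and while Theorem~\ref{thm:mcc-dual} gives us finite boundedness and (when the Gaifman graph is complete) homogeneity, a general cycle query need not have complete Gaifman graph, so we must work with a possibly non-homogeneous $\bC_\mu$ and argue purely via its universal-algebraic / homomorphism properties. The cleanest route is probably not to analyse $\bC_\mu$ directly but to exhibit an explicit finite directed graph (or a finite gadget family) whose ``resilience/no-$\mu$'' structure is governed by max-cut, and then translate this into a pp-power by checking that the required relation lies in $\langle \Delta_\mu \rangle$ using only $\Feas$, $\Opt$, sums, and projection — in other words, reduce the combinatorics to: along the fixed cycle $C$, the number of ``orientation defects'' of a candidate homomorphism is exactly the quantity minimised in directed max-cut. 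Verifying that this count is pp-expressible, and that the irregular case of short cycles ($k=3$) and repeated vertices on $C$ does not create degeneracies, is where the real work lies; I would handle it by first treating a ``clean'' induced cycle and then arguing that extra chords or identifications can only help (they restrict the dual further without destroying the max-cut gadget).

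Finally, I would assemble the pieces: fix the cycle, define the pp-power $\Delta'$ of $\Delta_\mu$ whose binary relation is the pp-expressed ``cycle-cost'' relation, verify $\Delta'$ is fractionally homomorphically equivalent to $\Gamma_{\textup{MC}}$ (one direction identifies the two colour classes with two orbits/elements witnessing the cheap versus expensive traversal; the other is the trivial scaling/shifting), invoke transitivity of pp-constructions together with Example~\ref{expl:max-cut-hard}, and conclude $\Delta_\mu$ pp-constructs $(\{0,1\},\OIT)$. The self-join-free hypothesis is used exactly to guarantee via Lemma~\ref{lem:sjf-ucq} that the reduct behaviour of $\Delta_\mu$ on the relation symbols occurring in $C$ is as simple as possible, so that the cost of a traversal of $C$ decomposes additively over its edges — this additivity is what makes the max-cut gadget go through.
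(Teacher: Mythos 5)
Your proposal defers exactly the step that constitutes the paper's proof. Saying that one should ``encode an edge of the target as the cheapest way to route the cycle $C$ through two designated vertices of $\bC_\mu$, so that traversing the cycle the wrong way costs $1$ and the right way costs $0$'' is not a construction: you never exhibit the pp-expression, never identify which summands are crisp (via $\Opt$) and which are not, and never verify the two fractional homomorphisms. The direction onto $\Gamma_{\textup{MC}}$ is where a concrete obstruction sits. A fractional homomorphism from your pp-power to $\Gamma_{\textup{MC}}$ requires a map $g$ from the (power of the) domain of $\bC_\mu$ to $\{0,1\}$ such that every cost-$0$ pair is sent to $(0,1)$; in particular the cost-$0$ digraph of your pp-expressed relation must contain no directed walk of length $2$, i.e.\ it must be globally two-layered. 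This is precisely the structure exploited in the paper's Lemma~\ref{lem:long-path}, and there it is available only because the dual is \emph{finite and acyclic}, so one can grade by longest outgoing path. For a cycle query the dual $\bC_\mu$ is infinite, and the natural candidates (e.g.\ the $R$-edge relation, or any ``one edge of a copy of $\mu$ is violated'' relation) contain arbitrarily long cost-$0$ directed paths, so no such two-layering exists; your sketch gives no mechanism for manufacturing one, and ``identify the two colour classes with two orbits'' does not address it, since orbits of $\Aut(\bC_\mu)$ do not separate the endpoints of these edges.

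The paper avoids the max-cut detour entirely: it pp-constructs $(\{0,1\};\OIT)$ directly, via ternary ``alternation'' gadgets $\psi_R,\psi_S,\psi_T$ built from overlapping copies of $\mu$ whose optima force an exclusive-or between two designated edges, a combined expression $\psi$ whose cost-$1$ optima realize ``exactly one of three $R$-edges present'' (the OIT pattern is what is naturally expressible here, not a binary cut constraint), and a carefully built finite structure $\bA$ with $\bA\models\neg\mu$ (proved by a no-closed-directed-walk argument in the associated multigraph) whose homomorphic image in $\bC_\mu$ supplies the witnesses $(a',b'),(c',d')$ needed for the homomorphism from $(\{0,1\};\OIT)$ into the $2$-dimensional pp-power. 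None of this is present in, or replaced by an alternative argument in, your proposal; as written it is a plan whose central gadget is missing and whose intended route runs into the two-layering problem described above. (Smaller issues: self-join-freeness is used in the paper to turn the cycle in $\multigr(\mu)$ into a \emph{directed} cycle without loss of generality and to keep the three distinguished symbols distinct, not merely for the reduct statement of Lemma~\ref{lem:sjf-ucq}; and your worry about non-homogeneity of $\bC_\mu$ is real but is handled in the paper by arguing only through duality, i.e.\ satisfiability of the $\phi$-formulas in $\bC_\mu$ via canonical databases, a device you would also need.)
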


The proof of Theorem~\ref{thm:sjf-cycle} is inspired by a much simpler pp-construction presented in~\cite[Example 8.18]{Resilience-VCSPs} for the query $\mu_{\triangle} := \exists x,y,z (R(x,y) \wedge S(y,z) \wedge T(z,x))$, which is the simplest query in the scope of Theorem~\ref{thm:sjf-cycle}. We recommend having a look at this example as a warm-up for this proof.
The main difference from the general proof is that $\mu_{\triangle}$ is a query with a complete Gaifman graph and therefore has a homogeneous dual\footnote{We remark that the dual used in~\cite[Example 8.18]{Resilience-VCSPs} is a different dual from $\bC_{\mu_{\triangle}}$. 
The dual used there embeds every finite relational $\{R,S,T\}$-structure $\bA$ that does not satisfy $\mu_{\triangle}$, whereas our dual is a model-complete core; so, for example, the empty structure on two vertices maps homomorphically into $\bC_{\mu_\triangle}$, but does not have an embedding.
},
which significantly simplifies the second part of the construction.

\begin{proof}[Proof of Theorem~\ref{thm:sjf-cycle}]
Let $D$ denote the common domain of $\Delta_\mu$ and $\bC_\mu$.
Let $V_\text{all}$ be a countably infinite set of variables. In what follows we will simply write $R$ for $R^{\bC_\mu}$ and $R^{\Delta_\mu}$ where $R \in \tau$; the meaning will always be clear from the context. To increase readability, we will write $R^*$ instead of $\Opt(R)$ in pp-expressions.
We will often refer to the elements of the relations from $\tau$ as edges.
Since $\multigr(\mu)$ contains a cycle of length $\geq 3$, the \emph{directed} multigraph whose edge relation is the union (as a multiset) of all the relations of $\bD_\mu$ contains an oriented cycle $C$ of length $r \geq 3$. 
Since $\mu$ is a self-join-free query, we may assume without loss of generality that $C$ is a directed cycle.
Let $V = \{v_1, \dots, v_n\} \subseteq V_\text{all}$ be the set of variables of $\mu$. 
Let \[\mu = \exists v_1, \dots, v_n (R(x,y) \wedge S(y,z) \wedge T(z,w) \wedge \mu_C  \wedge \mu'),\] where $R$, $S$, $T$ are distinct binary relation symbols and $x$, $y$, $z$ are distinct variables from $\{v_1, \dots, v_n\}$. 
If $r = 3$, then $x=w$ and $\mu_{C}$ is the empty conjunction. 
If $r \geq 4$, then $w \notin \{x,y,z\}$ and $R(x,y) \wedge S(y,z) \wedge T(z,w) \wedge \mu_C$ forms the directed cycle $C$. 
Note that $\mu'$ is a conjunction of atoms over variables from $V$ that does not contain any of the relation symbols from $\{R,S,T\}$ or $\mu_C$.

We will work in the situation where $r \geq 4$; the proof can be adapted straightforwardly to the case $r=3$ where $z=w$. 
We may assume without loss of generality that $(v_1, v_2, v_3, v_4) = (x,y,z,w)$. Since $\mu_C$ cannot contain $v_2$ and $v_3$ and necessarily contains $v_1$ and $v_4$, we may assume that $\mu_C = \mu_C(v_1, v_4, v_5, \dots, v_n)$. Let $\tau_C \subsetneq \tau$ be the signature of $\mu_C$.
The cycle $C$ is illustrated in Figure~\ref{fig:mu}. The relations $R$, $S$, $T$ are denoted by red, blue, and green edge,
respectively. Black edges represent the rest of the cycle composed from edges from $\tau_C$. The atoms with symbols from $\tau \setminus (\{R,S,T\} \cup \tau_C)$ are not depicted.

\begin{figure}
    \centering
    \includegraphics[]{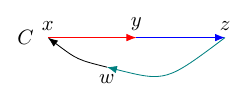}
    \caption{An illustration of the cycle $C$.}
    \label{fig:mu}
\end{figure}

Let $\mu^*(v_1, \dots, v_n)$ denote the formula $\mu_C \wedge \mu'$.
Let $\psi^*$ denote the pp-expression obtained from $\mu^*$ by replacing all occurrences of $\wedge$ by $+$ and all symbols $R$ by $R^*$. We also need the following notation. If $k,\ell \in \N$, $k \leq \ell$, and $u_1, \dots, u_k \in V_\text{all}$,
then $(u_1, \dots, u_k)^+$ denotes a tuple from $V_\text{all}^\ell$ which has on every  position $i\in \{1, \dots, k\}$ the variable $u_i$ and fresh variables on all remaining positions, i.e., variables that have not appeared before. The length $\ell$ of the tuple will always be clear from the context.
If $\phi(x_1, \dots, x_\ell)$ is a first-order $\tau$-formula, we write $\phi^+(x_1, \dots, x_k)$ for the first order formula $\exists x_{k+1}, \dots, x_\ell. \; \phi(x_1, \dots, x_k, x_{k+1}, \dots, x_\ell)$.
In what follows, we use the formulation that an atomic expression \emph{holds} if it evaluates to $0$, and an atomic expression \emph{is violated} if it does not hold. Since we will evaluate pp-expressions only in $\Delta_\mu$, we will write $\psi$ instead of $\psi^{\Delta_\mu}$ for every pp-expression $\psi$ over $\tau$. The cost of $\psi$
always means the cost of $\psi$ in $\Delta_\mu$. All the infima in the pp-expressions below are taken over the difference of the set of variables that appear in the pp-expression and the free variables of the respective pp-expression.

Next, we define auxiliary pp-expressions $\psi_R$, $\psi_S$, and $\psi_T$ that will be used in the construction.
The pp-expression $\psi_R$ 
is defined by
\begin{align*}
\psi_R(x_0,y_0,x_1,y_1) = \inf \big( &R(x_0,y_0) + S(y_0,z_0) + T^*(z_0,w_0) + \psi^*((x_0,y_0,z_0,w_0)^+)  \\
+ &R^*(x_1,y_0) + S(y_0,z_0) + T(z_0,w_1) + \psi^*((x_1,y_0,z_0,w_1)^+)\\
+ &R(x_1,y_1) + S^*(y_1,z_0) + T(z_0,w_1) + \psi^*((x_1,y_1,z_0,w_1)^+)\big).
\end{align*}
Note that the cost of $\psi_R$ is at least $3$, since each row of the expression 
contains a copy of $\mu$ with two atoms that are not crisp.

Below we define quantifier-free formulas
$\phi_R^0$ and $\phi_R^1$
with the property that for all $x_0, y_0, x_1, y_1 \in D$, 
\[\bC_\mu \models \Opt(\psi_R)(x_0,y_0,x_1,y_1) \Leftrightarrow (\phi_R^0 \vee \phi_R^1)^+(x_0,y_0,x_1,y_1);\]
see Figure~\ref{fig:R} for an illustration of $\psi_R$, $\phi_R^0$, and $\phi_R^1$. 
To avoid listing all the variables, we assume that the variables $x_0, y_0, x_1, y_1$ are the first four entries of $\phi_R^0$ and $\phi_R^1$, and we denote the remaining variables by `$\dots$'. Since $\phi_R^0$ and $\phi_R^1$ are first-order formulas, repetitions of the same atom do not make a difference. However, we leave a blank space in the place where an atom from the previous line could be repeated for better comparison with $\psi_R$. Let
\[
\begin{alignedat}{4}
&\phi_R^0(x_0,y_0,x_1,y_1, \dots)
& \; := \neg R(x_0,y_0) &\quad &\wedge S(y_0,z_0) &\quad &\wedge T(z_0,w_0) &\wedge \mu^*((x_0,y_0,z_0,w_0)^+)  \\
&             &\wedge R(x_1,y_0) &\quad &\phantom{\wedge S(y_0,z_0)} &\quad &\wedge \neg T(z_0,w_1) &\wedge \mu^*((x_1,y_0,z_0,w_1)^+)\\
&             &\wedge R(x_1,y_1) &\quad &\wedge S(y_1,z_0) &\quad &\phantom{\wedge T(z_0,w_1)} &\wedge \mu^*((x_1,y_1,z_0,w_1)^+)
\end{alignedat}
\]
and
\[
\begin{alignedat}{4}
&\phi_R^1(x_0,y_0,x_1,y_1, \dots)
& \; := R(x_0,y_0) &\wedge \neg S(y_0,z_0) &\wedge T(z_0,w_0) &\wedge \mu^*((x_0,y_0,z_0,w_0)^+)  \\
&             & \wedge R(x_1,y_0) &\phantom{\wedge \neg S(y_0,z_0)} & \wedge T(z_0,w_1) &\wedge \mu^*((x_1,y_0,z_0,w_1)^+)\\
&             &\wedge \neg R(x_1,y_1) &\wedge S(y_1,z_0) &\phantom{\wedge T(z_0,w_1)} &\wedge \mu^*((x_1,y_1,z_0,w_1)^+).
\end{alignedat}
\]

We argue that $\phi_R^0$ and $\phi_R^1$ are satisfiable in $\bC_\mu$. Let $(\phi_R^0)'$ be the primitive positive sentence resulting from $\phi_R^0$ by omitting the negated atoms and existentially quantifying all its variables. The canonical database of $(\phi_R^0)'$ does not satisfy $\mu$ and therefore maps homomorphically into $\bC_\mu$. Since $\bC_\mu \not \models \mu$, the image of the canonical database shows 
that $\phi_R^0$ is satisfiable in $\bC_\mu$; the argument for $\phi_R^1$ is analogous.
Note that every tuple that satisfies $\phi_R^0$ or $\phi_R^1$ in $\bC_\mu$ certifies that the cost $3$ of $\psi_R$ can be achieved. Therefore, $\Opt(\psi_R)$ consists precisely of tuples that realize the cost $3$. 
Also note that from each copy of $\mu$ in $\psi_R$, one of the two edges that are not crisp needs to be violated to realize the infimum from the definition of $\Opt(\psi_R)$. To realize the cost $3$, there are only two options: either to violate $R(x_0, y_0)$ and $T(z_0,w_1)$, or to violate $S(y_0,z_0)$ and $R(x_1, y_1)$. Therefore, for all $x_0,y_0,x_1,y_1 \in D$ we have
\begin{align*}
\bC_\mu &\models \Opt(\psi_R)(x_0,y_0,x_1,y_1) \Leftrightarrow (\phi_R^0 \vee \phi_R^1)^+(x_0,y_0,x_1,y_1) 
\end{align*}
and, in particular, 
\begin{align}\label{eq:opt-psiR}
    \bC_\mu &\models \Opt(\psi_R)(x_0,y_0,x_1,y_1) \Rightarrow (\neg R(x_0,y_0) \wedge R(x_1, y_1)) \vee (R(x_0,y_0) \wedge \neg R(x_1, y_1)). 
\end{align}
In this sense, $\Opt(\psi_R)$ implies an  alternation of two $R$-edges. Similarly, the pp-expression $\Opt(\psi_S)$ will imply an alternation of an $R$-edge and an $S$-edge, and $\Opt(\psi_T)$ will imply an alternation of an $R$-edge and a $T$-edge.

\begin{figure}
    \centering
    \includegraphics[]{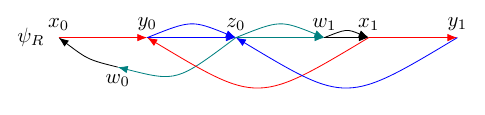}
    \includegraphics[]{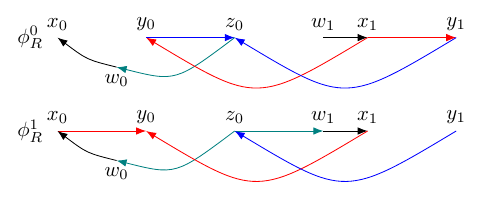}
    \caption{An illustration of the pp-expression $\psi_R$ and the quantifier-free formulas $\phi_R^0$ and $\phi_R^1$.}
    \label{fig:R}
\end{figure}

The pp-expression $\psi_S$ is defined by
\begin{align*}
\psi_S(x_0,y_0,y_2,z_1) := \inf \big(
& R(x_0,y_0) + S^*(y_0,z_0) + T(z_0,w_0) + \psi^*((x_0,y_0,z_0,w_0)^+)  \\
+ &R^*(x_0,y_1) + S(y_1,z_0) + T(z_0,w_0)+ \psi^*((x_0,y_1,z_0,w_0)^+)\\
+ &R(x_1,y_1) + S(y_1,z_0) + T^*(z_0,w_1) + \psi^*((x_1,y_1,z_0,w_1)^+) \\
+ &R(x_1,y_1) + S^*(y_1,z_1) + T(z_1,w_2) + \psi^*((x_1,y_1,z_1,w_2)^+) \\
+ &R^*(x_1,y_2) + S(y_2,z_1) + T(z_1,w_2) + \psi^*((x_1,y_2,z_1,w_2)^+)
\big).
\end{align*}

Note that the cost of $\psi_S$ is at least $5$, because each row of the expression contains a copy of $\mu$ with two summands that are not crisp. 

Below we define quantifier-free formulas $\phi_S^0$ and $\phi_S^1$ 
with the property that for all $x_0,y_0,y_2,z_1 \in D$
\[\bC_\mu \models \Opt(\psi_S)(x_0,y_0,y_2,z_1) \Leftrightarrow (\phi_S^0 \vee \phi_S^1)^+(x_0,y_0,y_2,z_1);\]
see Figure~\ref{fig:S} for the illustration of $\psi_S$, $\phi_S^0$, and $\phi_S^1$.
Again, to avoid listing all the variables, we assume that the variables $x_0, y_0, y_2, z_1$ are the first four entries of $\phi_S^0$ and $\phi_S^1$ and indicate the remaining variables by `$\dots$'. Let
\[
\begin{alignedat}{4}
&\phi_S^0(x_0,y_0,y_2,z_1, \dots) \; & := \neg R(x_0,y_0) &\wedge S(y_0,z_0) &\wedge T(z_0,w_0) &\wedge \psi^*((x_0,y_0,z_0,w_0)^+)  \\
&                          &\wedge R(x_0, y_1) &\wedge \neg S(y_1,z_0) &\phantom{\wedge T(z_0, w_0)} &\wedge \psi^*((x_0,y_1,z_0,w_0)^+)\\
&                          &\wedge R(x_1, y_1) &\phantom{\wedge \neg S(y_1,z_0)} &\wedge T(z_0,w_1) &\wedge \psi^*((x_1,y_1,z_0,w_1)^+) \\
&                          &\phantom{\wedge R(x_1, y_1)} &\wedge S(y_1, z_1) &\wedge \neg T(z_1,w_2) &\wedge \psi^*((x_1,y_1,z_1,w_2)^+) \\
&                          &\wedge R(x_1,y_2) &\wedge S(y_2,z_1) &\phantom{\wedge \neg T(z_1,w_2)} &\wedge \psi^*((x_1,y_2,z_1,w_2)^+)
\end{alignedat}
\]
and
\[
\begin{alignedat}{4}
&\phi_S^1(x_0,y_0,y_2,z_1, \dots) \; 
& := R(x_0,y_0)  &\wedge S(y_0,z_0) &\wedge \neg T(z_0,w_0)   &\wedge \psi^*((x_0,y_0,z_0,w_0)^+)  \\
&               &\wedge R(x_0, y_1)  &\wedge S(y_1,z_0)  &\phantom{\wedge \neg T(z_0, w_0)} &\wedge \psi^*((x_0,y_1,z_0,w_0)^+)\\
&               &\wedge \neg R(x_1, y_1) &\phantom{\wedge S(y_1,z_0)} &\wedge T(z_0,w_1)         &\wedge \psi^*((x_1,y_1,z_0,w_1)^+) \\
&        &\phantom{\wedge \neg R(x_1, y_1)} &\wedge S(y_1, z_1) &\wedge T(z_1,w_2)            &\wedge \psi^*((x_1,y_1,z_1,w_2)^+) \\
&         &\wedge R(x_1,y_2)   &\wedge \neg S(y_2,z_1)   &\phantom{\wedge T(z_1,w_2)}     &\wedge \psi^*((x_1,y_2,z_1,w_2)^+).
\end{alignedat}
\]
Note that each of $\phi_S^0$ and $\phi_S^1$ is satisfiable in $\bC_\mu$, because $\bC_\mu$ is a dual of $\mu$. Moreover, every tuple that satisfies $\phi_S^0$ or $\phi_S^1$ in $\bC_\mu$ certifies that the cost $5$ of $\psi_S$ can  be achieved. Therefore, $\Opt(\psi_S)$ consists of precisely those tuples that realize the cost $5$. 
Since from each copy of $\mu$ in $\psi_S$ one of the two edges that are not crisp needs to be violated, one can verify analogously as for $\psi_R$ that
for all $x_0,y_0,y_2,z_1 \in D$ we have
\begin{align*}
\bC_\mu &\models \Opt(\psi_S)(x_0,y_0,y_2,z_1) \Leftrightarrow (\phi_S^0 \vee \phi_S^1)^+(x_0,y_0,y_2,z_1) 
\end{align*}
and, in particular, 
\begin{align}\label{eq:opt-psiS}
    \bC_\mu &\models \Opt(\psi_S)(x_0,y_0,y_2,z_1) \Rightarrow (\neg R(x_0,y_0) \wedge S(y_2, z_1)) \vee (R(x_0,y_0) \wedge \neg S(y_2, z_1)) . 
\end{align}
In this sense, $\Opt(\psi_S)$ implies an alternation of an $R$-edge and an $S$-edge.

\begin{figure}
    \centering
    \includegraphics[]{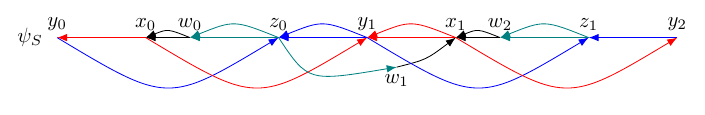}
    \includegraphics[]{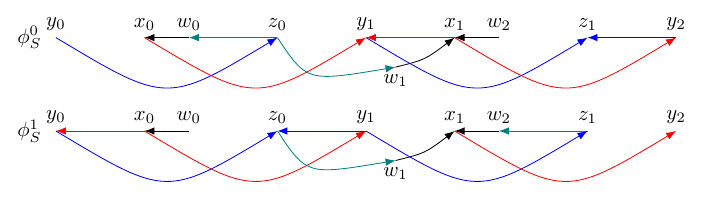}
    \caption{An illustration of the pp-expression $\psi_S$ and the quantifier-free formulas $\phi_S^0$ and $\phi_S^1$.}
    \label{fig:S}
\end{figure}

The pp-expression $\psi_T$ is defined as follows. 
\begin{align*}
\psi_T(x_0, y_0, z_1, w_3) := \inf \big( 
&R(x_0, y_0) + S(y_0,z_0) + T^*(z_0,w_0) + \psi^*((x_0, y_0, z_0, w_0)^+)  \\
+ &R^*(x_1,y_0) + S(y_0, z_0) + T(z_0, w_1)+ \psi^*((x_1, y_0, z_0, w_1)^+)\\
+ &R(x_1, y_1) + S^*(y_1,z_0) + T(z_0,w_1) + \psi^*((x_1,y_1,z_0,w_1)^+) \\
+ &R(x_1,y_1) + S(y_1,z_1) + T^*(z_1,w_2) + \psi^*((x_1,y_1,z_1,w_2)^+) \\
+ &R^*(x_2,y_1) + S(y_1,z_1) + T(z_1,w_3) + \psi^*((x_2,y_1,z_1,w_3)^+)
\big)
\end{align*}
Analogously to $\psi_S$, the cost of $\psi_T$ is at least $5$. Below we define the quantifier-free formulas $\phi_T^0$ and $\phi_T^1$ 
with the property that for all $x_0, y_0, z_1, w_3 \in D$ 
\[\bC_\mu \models \Opt(\psi_T)(x_0, y_0, z_1, w_3) \Leftrightarrow (\phi_T^0 \vee \phi_T^1)^+(x_0, y_0, z_1, w_3);\]
see Figure~\ref{fig:T} for an  illustration of $\psi_T$, $\phi_T^0$ and $\phi_T^1$. 
Again, to avoid listing all the variables, we assume that the variables $x_0, y_0, z_1, w_3$ are the first four entries of $\phi_T^0$ and $\phi_T^1$ and denote the remaining variables by `$\dots$'. Let
\[
\begin{alignedat}{4}
&\phi_T^0(x_0, y_0, z_1, w_3, \dots) 
&=  \neg R(x_0,y_0) &\wedge S(y_0,z_0) &\wedge T(z_0,w_0) &\wedge \mu^*((x_0, y_0, z_0, w_0)^+)  \\
& &\wedge R(x_1,y_0) &\phantom{\wedge S(y_0, z_0)} &\wedge \neg T(z_0, w_1) &\wedge \mu^*((x_1, y_0, z_0, w_1)^+)\\
& &\wedge R(x_1, y_1) &\wedge S(y_1,z_0) &\phantom{\wedge \neg T(z_0,w_1)} &\wedge \mu^*((x_1,y_1,z_0,w_1)^+) \\
& &\phantom{\wedge R(x_1, y_1)} &\wedge \neg  S(y_1,z_1) &\wedge T(z_1,w_2) &\wedge \mu^*((x_1,y_1,z_1,w_2)^+) \\
& &\wedge R(x_2,y_1) &\phantom{\wedge \neg S(y_1,z_1)} &\wedge T(z_1,w_3) &\wedge \mu^*((x_2,y_1,z_1,w_3)^+)
\end{alignedat}
\]
and
\[
\begin{alignedat}{4}
&\phi_T^1(x_0, y_0, z_1, w_3, \dots)
&= R(x_0, y_0) &\wedge \neg S(y_0,z_0) &\wedge T(z_0,w_0) &\wedge \mu^*((x_0, y_0, z_0, w_0)^+)  \\
& &\wedge R(x_1,y_0) &\phantom{\wedge \neg S(y_0,z_0)} &\wedge T(z_0, w_1) &\wedge \mu^*((x_1, y_0, z_0, w_1)^+)\\
& &\wedge  \neg R(x_1, y_1) &\wedge S(y_1,z_0) &\phantom{\wedge T(z_0,w_1)} &\wedge \mu^*((x_1,y_1,z_0,w_1)^+) \\
& &\phantom{\wedge \neg R(x_1, y_1)} &\wedge S(y_1,z_1) &\wedge T(z_1,w_2) &\wedge \mu^*((x_1,y_1,z_1,w_2)^+) \\
& &\wedge R(x_2,y_1) &\phantom{\wedge S(y_1,z_1)} &\wedge \neg T(z_1,w_3) &\wedge \mu^*((x_2,y_1,z_1,w_3)^+).
\end{alignedat}
\]
Each of $\phi_T^0$ and $\phi_T^1$ is satisfiable in $\bC_\mu$, because 
$\bC_\mu$ is a dual of $\mu$. 
Moreover, every tuple that satisfies $\phi_T^0$ or $\phi_T^1$ in $\bC_\mu$ certifies that the the cost $5$ of $\psi_T$ can  be achieved. Therefore, $\Opt(\psi_T)$ consists of precisely those tuples that realize the cost $5$. Similarly to $\psi_R$ and $\psi_S$, we obtain that
for all $x_0,y_0,z_1,w_3 \in D$
we have
\begin{align*}
\bC_\mu &\models \Opt(\psi_T)(x_0,y_0,z_1,w_3) \Leftrightarrow (\phi_T^0 \vee \phi_T^1)^+(x_0,y_0,z_1,w_3) 
\end{align*}
and, in particular, 
\begin{align}\label{eq:opt-psiT}
    \bC_\mu &\models \Opt(\psi_T)(x_0,y_0,z_1,w_3) \Rightarrow (\neg R(x_0,y_0) \wedge T(z_1, w_3)) \vee (R(x_0,y_0) \wedge \neg T(z_1, w_3)). 
\end{align}
In this sense, $\Opt(\psi_T)$ implies an alternation of an $R$-edge and a $T$-edge.

\begin{figure}
    \centering
    \includegraphics[]{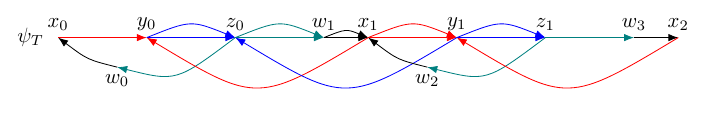}
    \includegraphics[]{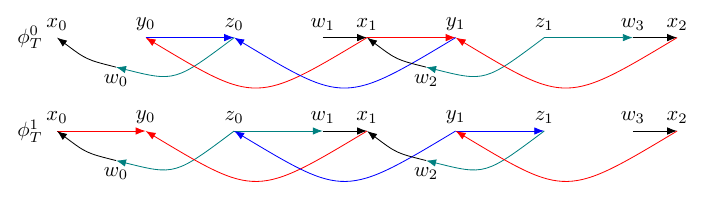}
    \caption{An illustration of the pp-expression $\psi_T$ and the quantifier-free formulas $\phi_T^0$ and $\phi_T^1$.}
    \label{fig:T}
\end{figure}

Let $\psi(x_R, y_R, x_S, y_S, x_T, y_T)$ be the pp-expression
\begin{align} 
\inf \big( &R(x,y) + S(y,z) + T(z,w) + \psi^*((x,y,z,w)^+) \label{eq:psi1}\\
+ &\Opt(\psi_R(x_R, y_R, x,y)) + \Opt(\psi_S(x_S, y_S ,y,z)) + \Opt(\psi_T(x_T, y_T,z,w))
\label{eq:psi2}
\big).
\end{align}
Note that the expression in \eqref{eq:psi2} attains only values $0$ and $\infty$. Clearly, the cost of $\psi$ is at least $1$, because \eqref{eq:psi1} contains a copy of $\mu$ and for every tuple that realizes the cost $1$ precisely one of the atoms $R(x,y)$, $S(y,z)$, and $T(z,w)$ is violated. By \eqref{eq:opt-psiR}, \eqref{eq:opt-psiS} and \eqref{eq:opt-psiT}, this implies that precisely one of the atoms $R(x_R, y_R)$, $R(x_S, y_S)$ and $R(x_T, y_T)$ holds. Intuitively, this simulates the behavior of the boolean relation $\OIT$ that we want to pp-construct.

We claim there exist $a',b',c',d' \in D$ such that $(a',b') \in R$, $(c',d') \notin R$ and \[(a',b',c',d',c',d'), (c',d',a',b',c',d'), (c',d',c',d',a',b') \in \Opt(\psi).\]
These elements will be needed to obtain a homomorphism from $(\{0,1\};\OIT)$ to a $2$-dimensional pp-power of $\Delta_\mu$.
We obtain these elements by constructing a finite relational $\tau$-structure $\bA$ which maps homomorphically into $\bC_\mu$ and whose domain $A$ contains elements $a,b,c,d$; the elements $a',b',c',d'$ will be the images of $a,b,c,d$ under this homomorphism, respectively.

First note that for any structure $\bA$ with a domain $A \subseteq V_\text{all}$, if for some first-order formula $\phi(x_1, \dots, x_\ell)$, $k \leq \ell$, $u_1,\dots,u_k \in A$, and some extension of the tuple $(u_1, \dots, u_k)$ to an $\ell$-tuple $(u_1, \dots, u_k)^+$ we have that $\bA \models \phi((u_1, \dots, u_k)^+)$, then $\bA \models \phi^+(u_1, \dots, u_k)$.
Let $a,b,c,d$ and $x^Q, y^Q, z^Q, w^Q$, for $Q \in \{R,S,T\}$, be distinct elements of $V_\text{all}$. 
Let $A$ be a finite subset of $V_\text{all}$ consisting of all elements that appear in the definition of the relations of $\bA$ below and elements completing the tuples of the form $(u_1, \dots, u_k)^+$ in that definition.
Let $\bA$ be the $\tau$-structure with domain $A$ whose relations
are defined by the following conditions.
\begin{enumerate}[(i)]
    \item $\bA \models S(y^R,z^R) \wedge T(z^R,w^R) \wedge \mu^*((x^R, y^R, z^R, w^R)^+)$;
    \item $\bA \models \phi_R^1((a,b,x^R, y^R)^+)$;
    \item $\bA \models \phi_S^0((c,d, y^R, z^R)^+)$;
    \item $\bA \models \phi_T^0((c,d,z^R, w^R)^+)$;
    \item $\bA \models R(x^S,y^S) \wedge T(z^S, w^S) \wedge \mu^*((x^S, y^S,z^S,w^S)^+)$;
    \item $\bA \models \phi_R^0((c,d,x^S,y^S)^+)$;
    \item $\bA \models \phi_S^1((a,b,y^S, z^S)^+)$;
    \item $\bA \models \phi_T^0((c,d,z^S,w^S)^+)$;
    \item $\bA \models R(x^T, y^T) \wedge S(y^T,z^T) \wedge \mu^*((x^T,y^T,z^T,w^T)^+)$;
    \item $\bA \models \phi_R^0((c,d,x^T,y^T)^+)$;
    \item $\bA \models \phi_S^0((c,d,y^T, z^T)^+)$;
    \item $\bA \models \phi_T^1((a,b,z^T,w^T)^+)$;
    \item no more tuples than those forced by the conditions above lie in the relations of $\bA$.
\end{enumerate}
Note that the conditions are compatible with each other and hence the structure $\bA$ exists; for example, item (ii) implies that $\bA \models R(a,b) \wedge \neg R(x^R,y^R)$, which is compatible with the edge $R(x^R, y^R)$ being left out in (i).

We provide some intuition about $\bA$. Consider the substructure induced on the elements of $\bA$ appearing in items (i)-(iv); for a reason that will become apparent in a moment we refer to it as the \emph{$R$-copy}. Item (i) together with (xiii) implies that we leave out an $R$-edge from a copy from $\mu$,
exhibiting one of the optimal behaviors for the expression in \eqref{eq:psi1} when interpreted in $\bC_\mu$.  Item (ii) implies that $\bA \models R(a,b) \wedge \neg R(x^R,y^R)$ and that $(a,b,x^R,y^R)$ would lie in $\Opt(\psi_R)$ if the $R$-copy were a substructure of $\bC_\mu$. Similarly, item (iii) implies that $\bA \models \neg R(c,d) \wedge S(y^R,z^R)$ and that $(c,d,y^R, z^R)$ would lie in $\Opt(\psi_S)$ if the $R$-copy were a substructure of $\bC_\mu$. Finally, item (iv) implies that $\bA \models \neg R(c,d) \wedge T(z^R,w^R)$ and $(c,d,z^R,w^R)$ would lie in $\Opt(\psi_T)$ if the  $R$-copy were a substructure of $\bC_\mu$. Analogously, we will call the substructure stemming from (v)--(viii) the \emph{$S$-copy} and from (ix)--(xii) the \emph{$T$-copy}; their properties are analogous to the properties of the $R$-copy with the obvious alterations.

Let $M$ denote the directed multigraph on the domain $A$ whose edge relation is the union (as a multiset) of $R^\bA$, $S^\bA$, $T^\bA$, and $Q^\bA$ for $Q \in \tau_C$; relation symbols from $\tau \setminus (\{R,S,T\} \cup \tau_C)$ are not included.

\subparagraph*{Claim 1.} $\bA \models \neg \mu$.

In fact, we prove a stronger statement, namely that $M$ does not contain a closed directed walk of positive length, which implies that $\bA$ does not contain 
a copy of the cycle $C$. Note that by the construction, the $R$-copy, the $S$-copy, and the $T$-copy satisfy 
\[\neg \big( \exists x,y,z,w, \dots (R(x,y) \wedge S(y,z) \wedge T(z,w) \wedge \mu_C((x,w)^+))),\]
where `$\dots$' stands for the variables introduced to complete the tuple $(x,w)^+$.
Therefore, if $M$ contains a closed directed walk, it includes vertices of at least two of these copies. Note that the only vertices of $M$ that appear in more than one copy of $\psi$ are $a$, $b$, $c$, and $d$, so any closed directed walk in $M$ must contain $a$, $b$, $c$, or $d$. It is straightforward (but tedious) to verify that there is no closed directed walk in $M$ containing $a$, $b$, $c$, or $d$; we show the argument for $b$. All edges in $M$ incident to  $b$ are implied by items (ii), (vii), and (xii). The only outgoing edge from $b$ in $M$ is implied by (vii) (see formulas $\phi_R^1$, $\phi_S^1$, and $\phi_T^1$) and every directed walk of positive length starting in $b$ is contained in the $S$-copy and is not closed (see Figure~\ref{fig:S}). It follows that $b$ is not contained in a closed directed walk in $M$. The argument for $a$, $c$, and $d$ is similar. Therefore, there is no closed directed walk in $M$, implying $\bA \models \neg \mu$.

\medskip

By Claim~1, there exists a homomorphism $h$ of $\bA$ into $\bC_\mu$, because $\bC_\mu$ is a dual of $\mu$. Let $a' := h(a)$, $b' := h(b)$, $c' := h(c)$, and $d' := h(d)$. 

\subparagraph*{Claim 2.} $(a',b') \in R$ and $(c',d') \notin R$.

On the one hand, since $h$ is a homomorphism and $(a,b) \in R^\bA$ (e.g., by item (ii)), $(a',b') \in R$. On the other hand, $(c',d') \notin R$: otherwise, since $\bA \models (\phi_R^0)^+(c,d,x^S, y^S)$ (item (vi)) and $h$ is a homomorphism, we get that
\[\bC_\mu \models \exists z,w (R(c',d') \wedge S(d',z) \wedge T(z,w) \wedge (\mu^*)^+(c',d',z,w)),\]
a contradiction with $\bC_\mu \models \neg \mu$.

\subparagraph*{Claim 3.} The cost of $\psi$ is 1 and $(a',b',c',d',c',d'), (c',d',a',b',c',d'), (c',d',c',d',a',b') \in \Opt(\psi)$.

By condition (ii), we have $\bA \models (\phi_R^1)^+(a,b,x^R, y^R)$. Note that since $h$ is a homomorphism, this implies that $\psi_R(a',b',h(x^R), h(y^R)) \leq 3$. Recall that the cost of $\psi_R$ is at least $3$, and,  therefore, $\psi_R(a',b',h(x^R), h(y^R)) = 3$ and $(a',b',h(x^R), h(y^R)) \in \Opt(\psi_R)$. Analogously, we have
\begin{align*}
\psi_S(c',d', h(y^R), h(z^R)) &= 5 \text{ and hence } (c',d', h(y^R), h(z^R))\in \Opt(\psi_S) \text{, and} \\
\psi_T(c',d',h(z^R), h(w^R)) &= 5 \text{ and hence } (c',d',h(z^R), h(w^R)) \in \Opt(\psi_T).
\end{align*}
By condition (i) and since $h$ is a homomorphism,
\begin{align} \label{eq:mu_R}
    R(h(x^R), h(y^R)) &+ S(h(y^R),h(z^R)) + T(h(z^R), h(w^R)) \nonumber \\
&+ \psi^*(h((x^R, y^R, z^R, y^R)^+)) \leq 1,
\end{align}
for the tuple $(x^R, y^R, z^R, y^R)^+$ completed as in item (i). Since 
 $\bC_\mu \models \neg \mu$, we get equality in \eqref{eq:mu_R}.
It follows that $\psi(a',b',c',d',c',d') \leq 1$ and since the cost of $\psi$ is $\geq 1$, we obtain that the cost of $\psi$ is equal to $1$ and $(a',b',c',d',c',d') \in \Opt(\psi)$. Analogously we argue that $(c',d',a',b',c',d')$ and $(c',d',c',d',a',b')$ lie in $\Opt(\psi)$.

\medskip
Since the cost of $\psi$ is $1$ (Claim~3), by the discussion under \eqref{eq:psi2}, $\Opt(\psi)$ contains only tuples $(d_1, \dots, d_6) \in D^6$ such that precisely one of $(d_1,d_2)$, $(d_3,d_4)$, and $(d_5,d_6)$ lies in $R$. Let $\bB = (D^2; \OIT^{\bB})$
be the relational structure where 
\[\OIT^{\bB}((x,y),(x',y'),(x'',y'')) := \Opt(\psi)(x,y,x',y',x'',y'').\]
Note that $\bB$ is a pp-power of $\Delta_\mu$.
We claim that $\bB$ is homomorphically equivalent to $(\{0,1\}; \OIT)$. Let $f \colon D^2 \to \{0,1\} $ be defined by $f(d_1, d_2) = 1$ if $(d_1, d_2)\in R$ and $f(d_1, d_2)=0$ otherwise. Then $f$ is a homomorphism from $\bB$ to $(\{0,1\}; \OIT)$ by the properties of $\Opt(\psi)$. For the other direction, let $g \colon \{0,1\} \to D^2$ be defined by $g(1) = (a',b')$ and $g(0) = (c',d')$. By Claim 3, $g$ is a homomorphism from $(\{0,1\}; \OIT)$ to $\bB$.
It follows that $\Delta_\mu$ pp-constructs $(\{0,1\}; \OIT)$ as we wanted to prove.
\end{proof}

The following corollary generalizes Theorem~\ref{thm:sjf-cycle}. 

\begin{corollary}\label{cor:cycle-hard-ucq}
Let $\mu$ be a union of minimal connected pairwise non-equivalent conjunctive queries over a binary signature containing a conjunctive query $\mu_0$. If $\multigr(\mu_0)$ contains a cycle of length $\geq 3$, then $\Delta_\mu$ pp-constructs $(\{0,1\}; \OIT)$ and the resilience problem for $\mu$ is NP-complete.
\end{corollary}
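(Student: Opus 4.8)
The plan is to bootstrap from the self-join-free case handled in Theorem~\ref{thm:sjf-cycle}, using the self-join-variation machinery (Lemma~\ref{lem:self-join-var}, Theorem~\ref{thm:self-join-var}) together with the restriction lemma for self-join-free unions (Lemma~\ref{lem:sjf-ucq}).

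\emph{First}, since the queries of $\mu$ are minimal, Lemma~\ref{lem:self-join-var} lets us write $\mu = f(\nu)$ for a self-join-free union $\nu$ of conjunctive queries over a binary signature $\tau \supseteq \{R\}$ and a $\nu$-injective arity-preserving map $f \colon \tau \to \{R\} \subseteq \tau$. Replacing one binary relation symbol in a query by another of the same arity changes neither the variable set nor the underlying multigraph of the canonical database; hence each query of $\nu$ has, up to renaming of relation symbols, the same canonical database as the corresponding query of $\mu$. In particular every query of $\nu$ is connected (those of $\mu$ are), and there is a query $\nu_0$ in $\nu$ with $f(\nu_0) = \mu_0$ and $\multigr(\nu_0) = \multigr(\mu_0)$, so $\multigr(\nu_0)$ contains a cycle of length $\geq 3$. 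By hypothesis all queries of $f(\nu) = \mu$ are minimal and pairwise non-equivalent, so Theorem~\ref{thm:self-join-var} applies and $\Delta_\mu$ pp-constructs $\Delta_\nu$.

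\emph{Second}, let $\rho \subseteq \tau$ be the (binary) signature of $\nu_0$. By Lemma~\ref{lem:sjf-ucq}, the $\rho$-reduct $\bB$ of $\bC_\nu$ is a dual of $\nu_0$, and the $\rho$-reduct of $\Delta_\nu$ equals $\bB_0^1$. The $\rho$-reduct of $\Delta_\nu$ is a pp-power of $\Delta_\nu$ (with $d = 1$), and $\bB$ is homomorphically equivalent to $\bC_{\nu_0}$ (both are duals of $\nu_0$), so $\bB_0^1$ is fractionally homomorphically equivalent to $\Delta_{\nu_0} = (\bC_{\nu_0})_0^1$ by Remark~\ref{rem:fhom-eq}; hence $\Delta_{\nu_0}$ has a pp-construction in $\Delta_\nu$. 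Since $\nu_0$ is a connected self-join-free conjunctive query over the binary signature $\rho$ whose multigraph contains a cycle of length $\geq 3$, Theorem~\ref{thm:sjf-cycle} gives that $\Delta_{\nu_0}$ pp-constructs $(\{0,1\};\OIT)$. Chaining the three pp-constructions by transitivity of pp-constructability, $\Delta_\mu$ pp-constructs $(\{0,1\};\OIT)$.

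\emph{Finally}, the signature of $\Delta_\mu$ is finite and $\Aut(\Delta_\mu) = \Aut(\bC_\mu)$ is oligomorphic by Theorem~\ref{thm:mcc-dual}, so Lemma~\ref{lem:hard} shows that $\VCSP(\Delta_\mu)$ is NP-hard. As $\bC_\mu$ is a dual of $\mu$ and $\mu$ is a union of connected conjunctive queries, Proposition~\ref{prop:connection} identifies the resilience problem for $\mu$, up to polynomial-time equivalence, with $\VCSP((\bC_\mu)_0^1) = \VCSP(\Delta_\mu)$, so the resilience problem for $\mu$ is NP-hard; together with its membership in NP (Section~\ref{sect:cq}) this yields NP-completeness. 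I do not anticipate a genuine obstacle here — the real work sits in Theorems~\ref{thm:sjf-cycle} and~\ref{thm:self-join-var} — but the points requiring care are the two bookkeeping facts in the first step: that the self-join-free union $\nu$ produced by Lemma~\ref{lem:self-join-var} still consists of connected queries (so that $\bC_\nu$ and $\bC_{\nu_0}$ exist and Theorem~\ref{thm:self-join-var} is applicable), and that $\multigr$ is invariant under renaming of relation symbols, so that the cycle of length $\geq 3$ is inherited by $\nu_0$.
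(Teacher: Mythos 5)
Your proposal is correct and follows essentially the same route as the paper's proof: decompose $\mu$ as $f(\nu)$ via Lemma~\ref{lem:self-join-var}, apply Theorem~\ref{thm:self-join-var} to get that $\Delta_\mu$ pp-constructs $\Delta_\nu$, pass to the reduct for $\nu_0$ via Lemma~\ref{lem:sjf-ucq} and Remark~\ref{rem:fhom-eq}, invoke Theorem~\ref{thm:sjf-cycle}, and conclude by transitivity, Lemma~\ref{lem:hard}, and Proposition~\ref{prop:connection}. The bookkeeping points you flag (connectedness of the queries in $\nu$ and invariance of $\multigr$ under renaming of relation symbols) are exactly the facts the paper uses implicitly, so there is no gap.
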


\begin{proof}
Let $\mu'$ be a self-join-free union of connected conjunctive queries over a binary signature $\tau'$ such that $\mu =f(\mu')$ for some $\mu'$-injective $f \colon \tau' \to \tau'$; it exists by Lemma~\ref{lem:self-join-var}.
Let $\mu_0'$ be a~conjunctive query from $\mu'$ such that 
$\mu_0=f(\mu_0')$ 
and let $\tau_0' \subseteq\tau'$ be the signature of $\mu_0'$. 

By Theorem~\ref{thm:self-join-var}, $\Delta_\mu$ pp-constructs $\Delta_{\mu'}$. By Lemma~\ref{lem:sjf-ucq}, the $\tau_0'$-reduct 
of $\Delta_{\mu'}$ is equal to $\bB_0^1$ for some dual $\bB$ of $\mu_0'$. Since $\bB$ and $\bC_{\mu_0'}$ are both duals of $\mu_0'$, the valued structure $\bB_0^1$ is fractionally homomorphically equivalent to $\Delta_{\mu_0'} = (\bC_{\mu_0'})_0^1$ (Remark~\ref{rem:fhom-eq}). 
Therefore,
$\Delta_{\mu'}$ pp-constructs $\Delta_{\mu_0'}$. By Theorem~\ref{thm:sjf-cycle}, $\Delta_{\mu_0'}$ pp-constructs $(\{0,1\}; \OIT)$. By the transitivity of pp-constructability, $\Delta_\mu$ pp-constructs $(\{0,1\}; \OIT)$. By Lemma~\ref{lem:hard}, 
$\VCSP(\Delta_\mu)$ is NP-hard. 
By Proposition~\ref{prop:connection}, the resilience problem for $\mu$ is NP-hard, and thus NP-complete. 
\end{proof}

\subsection{Hardness for queries with finite acyclic duals}
In this section we prove 
that the resilience problem for queries $\mu$ that have a non-trivial finite dual without directed cycles is NP-hard. We stress that this lemma crucially relies on our approach to analyse the complexity of the resilience problem for $\mu$ using the dual structure $\bC_\mu$ and $\Delta_\mu$.

\begin{lemma} \label{lem:long-path}
Let $\mu$ be a union of conjunctive queries over the signature $\{R\}$ such that the domain of $\bC_\mu$ is finite. Assume that $\bC_\mu$ contains at least one edge and does not contain any directed cycles. Then $\Delta_\mu$ pp-constructs $(\{0,1\}; \OIT)$.
\end{lemma}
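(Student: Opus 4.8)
The plan is to carry out an explicit pp-construction of $(\{0,1\};\OIT)$ in $\Delta_\mu$, in the same spirit as the proof of Theorem~\ref{thm:sjf-cycle}, but driven by the shape of the finite dual $\bC_\mu$ rather than by a union of distinct binary symbols. First I would extract the relevant combinatorics of $\bC_\mu$. Since $\bC_\mu$ is a finite acyclic digraph with at least one edge, it has a longest directed path $p_0\to p_1\to\dots\to p_n$ with $n\ge 1$, and this path is maximal: $p_0$ has no in-neighbour and $p_n$ has no out-neighbour in $\bC_\mu$. Equivalently, in $\Delta_\mu$ every directed walk of length $>n$ has positive cost, i.e.\ uses at least one non-edge of $\bC_\mu$, and more precisely a directed walk of length $2n+1$ has cost exactly $1$ and no smaller. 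Together with the absence of directed cycles, these are the only two features of $\bC_\mu$ I intend to use: ``completing a maximal directed path requires a genuine edge'' and ``$\bC_\mu$ has no closed directed walk''.

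Writing $R$ for both $R^{\bC_\mu}$ and $R^{\Delta_\mu}$, $D$ for their common domain, and $R^{*}$ for the crisp relation $\Opt(R^{\Delta_\mu})=R^{\bC_\mu}\in\langle\Delta_\mu\rangle$, I would build, by analogy with the expressions $\psi_R,\psi_S,\psi_T$ of Theorem~\ref{thm:sjf-cycle}, gadget pp-expressions whose $\Opt$ forces an \emph{alternation}: for instance a pp-expression in two distinguished pairs of variables, built as a sum of a few copies of ``$R$ along a directed walk of length $n+1$ with all but one of its atoms replaced by $R^{*}$'', arranged so that its cost is a fixed value achievable only by violating exactly one of two designated $R$-atoms, so that its $\Opt$ defines the relation ``$(x,y)\in R$ iff $(x',y')\notin R$''. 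The verification of what such an $\Opt$ computes is then exactly as in Theorem~\ref{thm:sjf-cycle}: one writes down the associated quantifier-free formulas, uses that $\bC_\mu$ is a dual of $\mu$ to see that they are satisfiable in $\bC_\mu$, and checks by hand which pairs of designated atoms can simultaneously be made to fail while realising the cost. I would then assemble a pp-expression $\psi(x_1,y_1,x_2,y_2,x_3,y_3)$ combining three such alternation gadgets with one ``main'' copy, namely a directed walk of length $2n+1$ in which all atoms but three are pinned to $R^{*}$ and the three free atoms carry the pairs $(x_i,y_i)$; since such a walk has cost at least $1$ and cost $1$ is attained only when exactly one of the three free atoms is a non-edge, the gadgets convert this into: $\Opt(\psi)$ consists precisely of the tuples in which exactly one of $(x_1,y_1),(x_2,y_2),(x_3,y_3)$ lies in $R$. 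For the witnesses I would, again mirroring Theorem~\ref{thm:sjf-cycle}, write down a concrete finite $\{R\}$-structure $\bA$ assembled from these formulas, check directly that its underlying directed multigraph contains no closed directed walk (hence $\bA\models\neg\mu$), map it into $\bC_\mu$, and read off an edge $(a',b')$ and a non-edge $(c',d')$ for which the three cyclic assignments $(a',b',c',d',c',d')$, $(c',d',a',b',c',d')$, $(c',d',c',d',a',b')$ all lie in $\Opt(\psi)$. Then $\bB:=(D^2;\OIT^\bB)$ with $\OIT^\bB(\mathbf x,\mathbf y,\mathbf z):=\Opt(\psi)(\mathbf x,\mathbf y,\mathbf z)$ is a $2$-dimensional pp-power of $\Delta_\mu$ that is homomorphically, and hence fractionally homomorphically, equivalent to $(\{0,1\};\OIT)$ via $h(d_1,d_2):=1$ if $(d_1,d_2)\in R$ and $h(d_1,d_2):=0$ otherwise, together with $g(1):=(a',b')$ and $g(0):=(c',d')$; so $\Delta_\mu$ pp-constructs $(\{0,1\};\OIT)$.

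The hard part, exactly as for Theorem~\ref{thm:sjf-cycle}, is the design and verification of $\psi$: one has to wring both ``at least one of the three pairs is an edge'' and ``at most one of the three pairs is an edge'' out of the single relation $R$ and an \emph{arbitrary} acyclic digraph $\bC_\mu$ (not necessarily a transitive tournament), while keeping the cost of $\psi$ pinned at the value realised only by the one-in-three assignments; the bookkeeping of which directed walks of a given length can be realised in $\bC_\mu$ with how many non-edges is the delicate point, and the case $n=1$ (where $\Delta_\mu$ is, up to a pp-power, the directed-max-cut template $\Gamma_{\textup{MC}}$ of Example~\ref{expl:max-cut-vs}) is a useful warm-up. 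As a sanity check, and a possible alternative entry point, note that $\Delta_\mu$ is finite-valued and admits no binary symmetric fractional polymorphism: a symmetric binary operation $s$ would map an edge $a\to b$ of $\bC_\mu$ together with the non-edge $(b,a)$ to the pair $(c,c)$ with $c:=s(a,b)=s(b,a)$, a loop and hence a non-edge, contradicting~\eqref{eq:fpol}; by the dichotomy for finite-valued VCSPs this already yields NP-hardness of $\VCSP(\Delta_\mu)$, although recovering the pp-construction of $(\{0,1\};\OIT)$ demanded by the statement still passes through a Max-Cut-style gadget like the one above.
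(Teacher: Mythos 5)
There is a genuine gap: the heart of your argument -- the design of the pp-expression $\psi$ whose $\Opt$ is to behave like a one-in-three relation -- is not carried out, and the specific ``main copy'' you sketch cannot work. You take a directed walk of length $2n+1$ with all atoms crisp except three, and claim that cost $1$ is attained exactly when one of the three free atoms is a non-edge. But in an acyclic digraph whose longest directed path (equivalently, longest directed walk) has length $n$, a cost-$1$ realization of a length-$(2n+1)$ walk splits it at the violated atom into two genuine walks of lengths $j-1$ and $2n+1-j$, forcing $j-1\le n$ and $2n+1-j\le n$, i.e.\ $j=n+1$: only the \emph{middle} atom can be the violated one. So $\Opt$ of your main copy does not say ``exactly one of the three designated pairs is a non-edge''; it pins a specific pair to be the non-edge and forces the other two to be edges. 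Consequently the three cyclic witness tuples $(a',b',c',d',c',d')$, $(c',d',a',b',c',d')$, $(c',d',c',d',a',b')$ cannot all lie in $\Opt(\psi)$, and the homomorphism from $(\{0,1\};\OIT)$ into your pp-power breaks down. The three-fold symmetry in Theorem~\ref{thm:sjf-cycle} comes from the main copy being a copy of the \emph{cycle} $\mu$, where any one of the three soft atoms may be the violated one; that symmetry has no analogue in a long walk through an acyclic digraph, and repairing it would require a genuinely different gadget, which you have not supplied (you yourself flag this bookkeeping as ``the delicate point'').

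It is also worth noting that the paper's proof avoids all of this and is far shorter: it uses the single pp-expression $\inf_{x_2,\dots,x_k}\bigl(R(x_0,x_1)+\Opt(R)(x_1,x_2)+\dots+\Opt(R)(x_{k-1},x_k)\bigr)$, where $k$ is the length of the longest directed path in $\bC_\mu$, and shows that the resulting binary valued relation is fractionally homomorphically equivalent to the directed Max-Cut template $\Gamma_{\textup{MC}}$ (acyclicity plus maximality of $k$ make the two maps $f(0)=a$, $f(1)=b$ and ``$g(x)=0$ iff $x$ starts a length-$k$ path'' work); then it simply cites that $\Gamma_{\textup{MC}}$ pp-constructs $(\{0,1\};\OIT)$ and uses transitivity of pp-constructability. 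You came close to this route twice -- you mention the $n=1$ case ``is, up to a pp-power, $\Gamma_{\textup{MC}}$'', and your sanity check (no binary symmetric fractional polymorphism, since an edge and its reversal would be sent to a loop) correctly yields NP-hardness via the finite-valued dichotomy -- but neither observation is developed into the pp-construction of $\OIT$ that the lemma actually asserts, so as it stands the proposal does not prove the statement.
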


\begin{proof}
Let $C$ be the domain of $\bC_\mu$. Let $k$ be the length of the longest directed path in $\bC_\mu$; it exists, because $\bC_\mu$ is finite and does not contain any directed cycles.
Let $\phi(x_0,x_1)$ be the pp-expression 
\[\inf_{x_2, \dots x_k} \big (R(x_0, x_1) + \Opt(R)(x_1, x_2) \dots + \Opt(R)(x_{k-1},x_k)\big ).\]
Let $(x,y)\in C^2$. Then $\phi^{\Delta_\mu}(x,y)=0$ if and only if there is a directed path in $\bC_\mu$ of length $k$ starting with the edge $(x,y)$.
If $\phi^{\Delta_\mu}(x,y)\neq 0$, then $\phi^{\Delta_\mu}(x,y) =1 $ if and only if there is a directed path in $\bC_\mu$ of  length $k-1$ starting in $y$. Finally, if $\phi^{\Delta_\mu}(x,y) \notin \{0,1\}$, then $\phi^{\Delta_\mu}(x,y) = \infty$.
Let $\Gamma$ be the valued $\{R\}$-structure on the domain $C$ where $R^\Gamma(x,y)= \phi^{\Delta_\mu}(x,y)$ for all $(x,y) \in C^2$. Note that $R^\Gamma \in \langle {\Delta_\mu} \rangle$.

Recall the valued structure $\Gamma_{\textup{MC}}$ from Example~\ref{expl:max-cut-vs}. Let $a,b \in C$ be such that there is a directed path in $\bC_\mu$ of length $k$ starting with the edge $(a,b)$. Let $f \colon \{0,1\} \to C$ be defined by $f(0):=a$ and $f(1):=b$. It is straightforward to verify that $\omega_f$ defined by $\omega_f(f)=1$ is a  
fractional homomorphism from $\Gamma_{\textup{MC}}$ to $\Gamma$.
Let $g \colon C \to \{0,1\}$ be defined by $g(x)=0$ for every $x \in C$ such that there is a directed path of length $k$ starting in $x$ and $g(x)=1$ otherwise. We argue that $\omega_g$ defined by $\omega_g(g)=1$ is a  
fractional homomorphism from $\Gamma$ to $\Gamma_{\textup{MC}}$. Let $(x,y) \in C^2$. Note that if $R^\Gamma(x,y) \geq 1$, then trivially $R^{\Gamma_{\textup{MC}}}(g(x), g(y)) \leq R^\Gamma(x,y)$. Suppose therefore that $R^\Gamma(x,y)=0$. Then by the definition of $\phi$, there is a directed path in $\bB$ of length $k$ starting with the edge $(x,y)$. By the definition of $g$, we have $g(x)=0$. Since there is no directed path of length $k+1$ in $\bC_\mu$ and $\bC_\mu$ does not contain directed cycles, there is no directed path of length $k$ starting in $y$ and therefore $g(y)=1$. Hence,  $R^{\Gamma_{\textup{MC}}}(g(x), g(y)) \leq R^\Gamma(x,y)$. It follows that $\omega_g$ is a 
fractional homomorphism from $\Gamma$ to $\Gamma_{\textup{MC}}$.

By the previous paragraph, $\Gamma$ is fractionally homomorphically equivalent to $\Gamma_{\textup{MC}}$. Since $R^\Gamma \in \langle \Delta_\mu \rangle$, we have that $\Delta_\mu$ pp-constructs $\Gamma_{\textup{MC}}$. 
We have already mentioned in Example~\ref{expl:max-cut-hard} that $\Gamma_{\textup{MC}}$ pp-constructs $(\{0,1\}, \OIT)$. By the transitivity of pp-constructability, $\Delta_\mu$ pp-constructs $(\{0,1\}, \OIT)$. 
\end{proof}

\section{Proof of Theorem~\ref{thm:main-ucq}}
We are now ready to prove the main result of the paper.

\begin{proof}[Proof of Theorem~\ref{thm:main-ucq}]
Since $\Aut(\bC_\mu) = \Aut(\Delta_\mu)$, items (1) and (2) are mutually exclusive by Corollary~\ref{cor:disjoint}. 
Hence it is enough to prove that item (1) or item (2) holds.
Without loss of generality, we may assume that all queries in $\mu$ are pairwise non-equivalent, minimal and connected (see Lemma~\ref{lem:con}). In particular, the queries in $\mu$ are pairwise homomorphically incomparable (see Section~\ref{sect:cq}). 
By this assumption, if $\mu$ contains $\mu_\ell$ or $\mu_e$, then $\mu$ is equal to $\mu_\ell$ or to $\mu_e$, respectively, in which case item (1) holds by Lemma~\ref{lem:tract}. 
We may therefore assume that $\mu$ contains neither $\mu_\ell$ nor $\mu_e$.

If $\mu$ contains a conjunctive query $\mu_0$ such that $\multigr(\mu_0)$ contains a cycle of length $\geq 3$, then item (2) holds by Corollary~\ref{cor:cycle-hard-ucq}. Suppose that this is not the case. Then for every query $\nu$ in $\mu$, $\multigr(\nu)$ is a tree, or $\nu$ contains the atoms $R(x,y)$ and $R(y,x)$ for some variables $x \neq y$, in which case $\nu = \mu_c$ by the minimality of $\nu$. 
Note that every $\nu$ such that $\multigr(\nu)$ is a tree 
has a homomorphism to $\mu_c$, 
so $\mu=\mu_c$ whenever $\mu$ contains $\mu_c$.
In this case, item (1) holds by Lemma~\ref{lem:tract}. Suppose therefore that $\mu \neq \mu_c$ and hence, $\multigr(\nu)$ is a tree for every query $\nu$ in $\mu$. Then $\mu$ has a finite dual 
by~\cite{NesetrilTardif} (see also \cite[Theorem 8.7]{Resilience-VCSPs}).
Since $\bC_\mu$ is the model-complete core of this dual, it also has a finite domain, so it is a finite directed graph with an edge relation $R$. Note that $\bC_\mu$ contains at least one edge, because $\mu \neq \mu_e$. 
It is easy to see that every orientation of a tree (in particular, every $\nu$ in $\mu$)  maps homomorphically to every directed cycle;
thus, $\bC_\mu$ does not contain directed cycles. 
By Lemma~\ref{lem:long-path}, 
$\Delta_\mu$
pp-constructs $(\{0,1\}; \OIT)$. 
By Lemma~\ref{lem:hard} and Proposition~\ref{prop:connection}, the resilience problem for $\mu$ is NP-complete. Therefore, item (2) holds.
\end{proof}



\bibliography{global}

\end{document}